\newtheoremstyle{exampstyle}
  {\topsep} 
  {\topsep} 
  {\itshape} 
  {} 
  {\bfseries} 
  {.} 
  {.5em} 
  {} 
\theoremstyle{exampstyle}
\numberwithin{equation}{section}
\newtheorem{theorem}{Theorem}
\newtheorem{lemma}{Lemma}[section]
\newtheorem{assumption}[lemma]{Assumption}
\newtheorem{remark}[lemma]{Remark}
\newtheorem{corollary}[lemma]{Corollary}
\newtheorem{proposition}[lemma]{Proposition}
\let\oldref\ref
\renewcommand{\ref}[1]{(\oldref{#1})}  
\renewcommand{\eqref}[1]{(\oldref{#1})}
\newbox\boxaddrone \newbox\boxaddrtwo
\newcommand{\norm}[1]{\left\Vert#1\right\Vert}
\newcommand{\A}{\mathcal{A}}
\newcommand{\N}{\mathcal{N}}
\newcommand{\ba}{\begin{eqnarray*}}
\newcommand{\ea}{\end{eqnarray*}}
\newcommand{\be}{\begin{equation}}
\newcommand{\ee}{\end{equation}}
\newcommand{\bea}{\begin{eqnarray}}
\newcommand{\eea}{\end{eqnarray}}
\newbox\boxaddrone \newbox\boxaddrtwo
\def\N+{n\in\mathbb{N}^{+}}
\def\n{\partial{\overrightarrow{\bf n}}}
\def\1d{\mathcal{D}((-\Delta)^{\gamma_1+1/2})}
\def\2d{\mathcal{D}((-\Delta)^{\gamma_2+1})}
\def\l{\langle}
\def\ro{\rangle_{\Omega}}
\def\rp{\rangle_{\kappa,\partial\Omega}}
\def\s{\text{Span}}
\def\A{\mathcal{A}}
\begin{document}
\title{Conditional well-posedness and data-driven method for identifying the dynamic source in a coupled diffusion system from one single boundary measurement}

\author[1,2]{Chunlong Sun\thanks{sunchunlong@nuaa.edu.cn}}
\author[3]{Mengmeng Zhang\thanks{corresponding author: mmzhang@hebut.edu.cn}}
\author[4]{Zhidong Zhang\thanks{zhangzhidong@mail.sysu.edu.cn}}
\affil[1]{School of Mathematics, Nanjing University of Aeronautics and Astronautics, Nanjing 211106, Jiangsu, China}
\affil[2]{Nanjing Center for Applied Mathematics, Nanjing 211135, Jiangsu, China}
\affil[3]{School of Science, Hebei University of Technology, Tianjin 300401, China}
\affil[4]{School of Mathematics (Zhuhai), Sun Yat-sen University, Zhuhai 519082, Guangdong, China}

\maketitle

\begin{abstract}
This work considers the inverse dynamic source problem arising from the time-domain fluorescence diffuse optical tomography (FDOT). We recover the dynamic distributions of fluorophores in biological tissue by the one single boundary measurement in finite time domain. We build the uniqueness theorem of this inverse problem. After that, we introduce a weighted norm and establish the conditional stability of Lipschitz type for the inverse problem by this weighted norm.
The numerical inversions are considered under the framework of the deep neural networks (DNNs).
We establish the generalization error estimates rigorously derived from Lipschitz conditional stability of inverse problem. Finally, we propose the reconstruction algorithms and give several numerical examples illustrating the performance of the proposed inversion schemes.\\

\noindent Keywords: inverse dynamic source problem, uniqueness, conditional stability, deep neural networks, generalization error estimates, numerical inversion.\\

\noindent AMS Subject Classifications: 35R30, 65M32.
\end{abstract}

\section{Introduction.}
\setcounter{equation}{0}
\subsection{Background and mathematical model.}
Fluorescence diffuse optical tomography (FDOT) is one type of diffuse optical tomography that uses fluorescence light from fluorophores in biological tissue, which is rapidly gaining acceptance as an important diagnostic and monitoring tool of symptoms in medical applications \cite{Arridge99, Arr09}. The fluorescence contrast agents allow tracking non-invasively and quantitatively specific molecular events or provide some clinically important information \textit{in vivo}. For FDOT, two processes are coupled, namely, {\it excitation} and {\it emission} ({\it fluorescence}), which can be described by a coupled diffusion system as follows.


Let $\Omega\subset\mathbb R^d$ be the background medium with its boundary $\partial\Omega$. Let $u_e, u_m$ be the photon density of excitation light and emission light, respectively. Furthermore, we consider the time-dependent fluorophores which means that the absorption coefficient $\mu_f$ depends on both $x$ and $t$. Then we consider the following initial boundary value problems for $u_e,\,u_m$ with $T< \infty$:
\begin{equation}\label{excitation-Ue}
\begin{cases}
\begin{aligned}
\left(c^{-1}\partial_t +\A\right)u_e(x,t)&=0, && (x,t)\in\Omega\times (0,T),\\
u_e&=0, && (x,t)\in\Omega\times \{0\}, \\
\mathcal B u_e&=g(x,t), && (x,t)\in\partial\Omega\times(0,T),
\end{aligned}
\end{cases}
\end{equation}
and
\begin{equation}\label{emission-um}
\begin{cases}
\begin{aligned}
\left(c^{-1}\partial_t +\A \right) u_m(x,t)&=\mu_f(x,t)u_e(x, t)=:S[\mu_f,u_e], && (x,t)\in\Omega\times (0,T),\\
u_m&=0, && (x,t)\in\Omega\times \{0\},\\
\mathcal B u_m&=0, && (x,t)\in\partial\Omega\times(0,T),
\end{aligned}
\end{cases}
\end{equation}
where $\mu_f$ denotes the dynamic distributions of fluorophores inside $\Omega$. The elliptic operator $\mathcal{A}$ and  boundary condition $\mathcal B$ are defined as
\begin{equation}\label{A}
\begin{cases}
\begin{aligned}
   \A \psi &=-\nabla\cdot(\kappa(x)\nabla \psi)+\mu_a(x)\psi,\ \psi\in H^2(\Omega),\\
    \mathcal B \psi&=\Big( \frac{\partial \psi}{\n}+ \beta \psi\Big) \Big|_{\partial\Omega},
\end{aligned}
\end{cases}
\end{equation}
where $\beta>0$ is the Robin coefficient;  $\kappa(x)$ is the diffusion coefficient; $\mu_a$ is the absorption coefficient of background medium. The time-domain FDOT is a method to achieve imaging of $\mu_f$ from the boundary data given by
\begin{equation}\label{data}
\varphi(x,t)=\frac{\partial u_m}{\n}\Big|_{\Gamma \times(0,T)},
\end{equation}
where  $\Gamma$ is a nonempty open subset of boundary $\partial\Omega$.

In the aspect of theoretical analysis, the uniqueness of recovering $\mu_f(x,t)$ requires the whole information of $u_m$ on $\Omega\times(0,T)$, but it is impractical in applications. However, although the boundary measurement as in \eqref{data} is practical, it is not sufficient to uniquely reconstruct $\mu_f(x,t)$ and its uniqueness is almost open. Actually, even for recovering the stationary fluorophores (i.e., $\mu_f(x,t)\equiv\mu_f(x)$) from \eqref{data}, the literature on the uniqueness is relatively rare. An identifiability result of absorption coefficient $\mu_f(x)$ is established in \cite{Liu:2020}, but it requires strong prior assumptions on $\mu_f(x)$. The authors prove that for $\Omega=\mathbb{R}_+^3$, $x=(\tilde x, x_3)\in \mathbb{R}^2\times\mathbb{R}_+^1$, by supposing $\mu_f(x)$ has the variable separable form $\mu_f(x)=p(\tilde x)q(x_3)$ with known vertical information $q(x_3)$, the horizontal information $p(\tilde x)$ can be uniquely determined from the boundary measurement \eqref{data} with $\Gamma=\partial\Omega$. Assuming that the support of $\mu_f(x)$ is at a point, called a point target, the minimal number of observation detectors to determine the point target location as well as its local stability are investigated in \cite{Eom24}.

In this work, we study the inverse problem of recovering the dynamic source $\mu_f(x,t)$ from boundary data \eqref{data}. Suppose that $S[\mu_f,u_e](x,t;x_s)$ in  \eqref{emission-um} possesses the following semi-discrete formulation:
\begin{equation}\label{ua-time}
S[\mu_f,u_e]=\sum_{k=1}^K p_k(x)\chi_{{}_{t\in [t_{k-1},t_k)}},
\end{equation}
where $\chi$ is the indicator function, the time mesh $\{t_k\}_{k=0}^K$ is given and the spatial components $\{p_k(x)\}_{k=1}^K$ are undetermined. For the unknown $\{p_k(x)\}_{k=1}^K$, we consider to recover them in $L^2(\Omega)$. Since the excitation $u_e$ is known, after solving $\{p_k(x)\}_{k=1}^K$, the values of $\{\mu_f(\cdot,t_k)\}_{k=0}^{K-1}$ can be recovered as $\mu_f(\cdot, t_k)={p_{k+1}(\cdot)}/{u_e(\cdot,t_k)}, \ k=0\cdots,K-1$. 
We conclude the inverse problem of time-domain FDOT as follows:
\begin{equation}\label{IP}
\text{recovering $\{p_k(x)\}_{k=1}^K$  in \eqref{ua-time} from the one single boundary measurement \eqref{data}}.
\end{equation}

\subsection{Literature and outline.}
The inverse problem \eqref{IP} has not been rigorously investigated as far as we know. With the Laplace transform and the knowledge of complex analysis, \cite{LinZhangZhang:2022, Sun2022} established the uniqueness theorem of inverse problem \eqref{IP} but required $T=\infty$. In this work, we will consider the uniqueness of recovering $\{p_k(x)\}_{k=1}^K$ by the boundary measurement in finite time domain ($T<\infty$). Furthermore, we will define a weighted norm and establish the conditional stability of Lipschitz type for the inverse problem \eqref{IP} by the defined weighted norm. The defined weighted norm is a little bit weaker than the standard $L^2$ norm. Hence the obtained stability result will be the conditional stability for our inverse problem in $L^2(\Omega)$ by some weighted norm.

Numerically, for the forward problem of FDOT, F.Marttelli et al \cite{Marttelli} and H.B.Jiang \cite{Jiang11} summarized the principles and applications of light propagation through biological tissue and other diffusive media, and gave the theory, solutions and software codes, respectively. For the inverse problem of FDOT, the necessary regularization techniques such as Tikhonov regularization, sparse regularization methods and hybrid regularization methods have been introduced to overcome the ill-posedness of the inverse problems \cite{Correia10,Dutta12,Liu20}. To solve the inverse problem, the gradient-type iteration such as Landweber iteration \cite{Hanke_1995} and the Newton-type iteration such as Levenberg–Maquardt method  \cite{Johann2010,hanke1997,Levenberg1944,Marquardt1963}, the iteratively regularized Gauss-Newton method \cite{JinQ_2013,Kaltenbacher_2015} are common iterative methods. \cite{Arridge99, Arr09} presented a review of methods for the forward and inverse problems in optical tomography. However, most existing works focus on recovering the stationary $\mu_f(x)$, the inversion of dynamic $\mu_f(x,t)$ in \eqref{IP} is still a challenging problem.


Recently, deep learning methods for solving PDEs have been realized as an effective approach, especially in high dimensional PDEs. Such methods have the advantage of breaking the curse of dimensionality. The basic idea is to use neural networks (nonlinear functions) to approximate the unknown solutions of PDEs by learning the networks parameters. For the forward problems, there exists many numerical works with deep neural networks involving the deep Ritz method (DRM) \cite{EWN-DRM}, the deep Galerkin method (DGM) \cite{Sirignano-DGM}, the DeepXDE method \cite{Lu-DeepXDE}, deep operator network method (DeepONet) \cite{Lu-DeepONet-2}, physical information neural networks (PINNs) \cite{Raissi}, the weak adversary neural network (WAN) \cite{Bao2, Bao1} and so on.
Theoretically, there are some rigorous analysis works investigating the convergence and error estimates for the solution of PDEs via neural networks, but the result are still far from complete. For example, the convergence rate of DRM with two layer networks and deep networks were studied in \cite{Duan_2022,Hong_2021, Lu_2021,Luo_2020}; the convergence of PINNs was given in \cite{Jiao,Mishra2,de2023error,Shin1,Shin2}. For the inverse problems, the PINNs frameworks can be employed to solve the so-called data assimilation or unique continuation problems, and rigorous estimates on the generalization error of PINNs were established in \cite{Mishra2}.
Bao et al \cite{Bao1} developed the WAN to solve electrical impedance tomography (EIT) problem. In \cite{Zhang-Li-Liu}, the authors studied a classical linear inverse source problem using the final time data under the frameworks of neural networks, where a rigorous generalization error estimate is proposed with a novel loss function including the Sobolev norm of some residuals. For more specific inverse problems applied in engineering and science, we refer to \cite{Chen-Lu,Raissi-Yazdani,Shukla}.

In this work, we consider the reconstruction of the dynamic source of the inverse problem \eqref{IP} parameterized by deep neural networks (DNNs). For the emission process, a new loss function is proposed with regularization terms depending on the derivatives of the residuals for PDEs and measurement data. We establish generalization error estimates rigorously derived from Lipschitz conditional stability of inverse problems. Furthermore, we propose the reconstruction scheme and make some numerical implementations to demonstrate the validity of the reconstruction algorithm.

The rest of this article is organized as follows. In Section \ref{sec_pre}, we collect several preliminary works. We prove the uniqueness theorem and the condition stability in Section \ref{section_unique}, which are stated in Theorems \ref{theorem-uniq} and \ref{stability-inv}. The numerical inversions will be considered in Section \ref{sec_pinn}. The authors introduce the loss functions and build generalization error estimates in Theorem \ref{theorem-error}. Then the reconstruction algorithms and some numerical experiments are provided in Section \ref{sec_num}, which can verify the theories.

\section{Preliminaries.}\label{sec_pre}
\subsection{Eigensystem of operator $\A$.}
For the operator $\A$ on $H^2(\Omega)$ with Robin boundary condition, we denote the eigensystem by $\{\lambda_n,\varphi_n(x)\}_{n=1}^\infty$. Then the following properties will be valid:
\begin{itemize}
    \item $0<\lambda_1\le \lambda_2\le\cdots$ and $\lambda_n\to\infty$ as $n\to \infty$;
    \item $\{\varphi_n\}_{n=1}^\infty\subset H^2(\Omega)$ is an orthonormal basis of $L^2(\Omega)$.
\end{itemize}
Furthermore, if $\varphi_n$ is an eigenfunction of $\A$ corresponding to $\lambda_n$, so is $\overline{\varphi_n}$,  where $\overline{\varphi_n}$ is the complex conjugate of $\varphi_n$. Hence we have that the set $\{\varphi_n(x)\}_{n=1}^\infty$ coincides with $\{\overline{\varphi_n(x)}\}_{n=1}^\infty$.
The trace theorem yields that   $\{\frac{\partial\varphi_n}{\n}|_{\partial \Omega}\}_{n=1}^\infty\subset H^{1/2}(\partial \Omega)$.
Also, we denote $\l\cdot,\cdot\rp$ as the weighted inner product in $L^2(\partial\Omega)$.

The next lemmas concern the vanishing property and the density of $\frac{\partial\varphi_n}{\n}$ on $\partial\Omega$.
\begin{lemma}\label{nonempty_open}
If $\Gamma$ is a nonempty open subset of $\partial \Omega$, then for each $n\in\mathbb N^+$, $\frac{\partial \varphi_n}{\n}$ can not vanish on $\Gamma$.
\end{lemma}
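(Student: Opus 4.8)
The plan is to argue by contradiction and reduce the statement to a unique continuation argument. Suppose that for some $n\in\mathbb N^+$ the normal derivative $\frac{\partial \varphi_n}{\n}$ vanishes on a nonempty relatively open set $\Gamma\subset\partial\Omega$. Recall that $\varphi_n$ solves $\A\varphi_n=\lambda_n\varphi_n$ in $\Omega$ together with the Robin condition $\frac{\partial \varphi_n}{\n}+\beta\varphi_n=0$ on $\partial\Omega$. Since $\beta>0$, the vanishing of the normal derivative on $\Gamma$ forces $\varphi_n|_\Gamma=0$ as well. Thus both the Dirichlet and Neumann traces of $\varphi_n$ vanish on $\Gamma$, i.e. $\varphi_n$ has vanishing Cauchy data there.

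The key step I would then carry out is a zero-extension across $\Gamma$ to set up unique continuation. I would pick a bounded connected domain $\Omega'\supsetneq\Omega$ with $\Omega'\cap\partial\Omega=\Gamma$, extend $\kappa$ and $\mu_a$ to $\Omega'$ while preserving uniform ellipticity and the Lipschitz regularity of $\kappa$, and define $\tilde\varphi_n:=\varphi_n$ on $\Omega$ and $\tilde\varphi_n:=0$ on $\Omega'\setminus\overline\Omega$. Because the Cauchy data of $\varphi_n$ match those of the zero function on $\Gamma$, the integration-by-parts identity produces no boundary term along $\Gamma$, so $\tilde\varphi_n\in H^1(\Omega')$ is a genuine weak solution of $(\A-\lambda_n)\tilde\varphi_n=0$ in all of $\Omega'$.

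Finally, $\tilde\varphi_n$ vanishes identically on the nonempty open set $\Omega'\setminus\overline\Omega$. Invoking the unique continuation property for the second-order elliptic operator $\A-\lambda_n$ on the connected domain $\Omega'$—namely, that a solution vanishing on a nonempty open subset vanishes identically, which holds once $\kappa$ is Lipschitz via Aronszajn--Cordes type Carleman estimates—I conclude $\tilde\varphi_n\equiv0$ on $\Omega'$, hence $\varphi_n\equiv0$ in $\Omega$. This contradicts the normalization $\|\varphi_n\|_{L^2(\Omega)}=1$ coming from $\{\varphi_n\}$ being an orthonormal basis of $L^2(\Omega)$, and the proof is complete.

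The step I expect to be the main obstacle is the unique continuation itself: it hinges on enough regularity of the leading coefficient $\kappa$ (Lipschitz continuity is the standard threshold) and on $\Omega$ being connected, and it requires verifying carefully that the matched Cauchy data really eliminate any distributional jump across $\Gamma$, so that the zero extension genuinely qualifies as a weak solution. Everything else—the use of the Robin condition to pass from the Neumann to the Dirichlet trace, and the normalization of $\varphi_n$—is routine.
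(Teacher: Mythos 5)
Your argument is correct and is the standard proof of this fact. The paper itself does not prove the lemma---it simply cites \cite[Lemma 2.1]{LinZhangZhang:2022}---and the cited result is established by exactly the route you take: the Robin condition with $\beta>0$ upgrades the vanishing of $\frac{\partial\varphi_n}{\n}$ on $\Gamma$ to vanishing Cauchy data, the zero extension across $\Gamma$ yields a weak solution of $(\A-\lambda_n)\tilde\varphi_n=0$ on an enlarged connected domain, and weak unique continuation forces $\varphi_n\equiv 0$, contradicting $\|\varphi_n\|_{L^2(\Omega)}=1$. The caveats you flag (connectedness of $\Omega$, Lipschitz regularity of $\kappa$ for the Carleman estimate, and choosing the bump domain so that the part of $\partial\Omega$ it meets lies inside $\Gamma$) are exactly the standing hypotheses under which the cited lemma is proved, so nothing is missing.
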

\begin{proof}
 See \cite[Lemma 2.1]{LinZhangZhang:2022}.
\end{proof}

\begin{lemma}
 The set $\text{Span}\{\frac{\partial\varphi_n}{\n}|_{\partial \Omega}\}_{n=1}^\infty$ is dense in $L^2(\partial \Omega)$.
\end{lemma}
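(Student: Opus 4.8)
The plan is to establish density by the standard Hilbert-space duality criterion: since the weighted inner product $\l\cdot,\cdot\rp$ is equivalent to the usual one on $L^2(\partial\Omega)$ (the diffusion coefficient $\kappa$ being bounded above and below by positive constants), it suffices to show that any $g\in L^2(\partial\Omega)$ satisfying $\l g,\frac{\partial\varphi_n}{\n}\rp=0$ for all $n$ must vanish. The key starting observation is that each eigenfunction obeys the homogeneous Robin condition $\mathcal B\varphi_n=0$, that is $\frac{\partial\varphi_n}{\n}=-\beta\varphi_n$ on $\partial\Omega$. Consequently the orthogonality hypothesis is equivalent to $\l\beta g,\varphi_n\rp=0$ for every $n$, and the task becomes transferring this boundary condition into the interior, where completeness of $\{\varphi_n\}$ in $L^2(\Omega)$ is available.

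To do this I would introduce an auxiliary elliptic problem. Let $z\in H^1(\Omega)$ be the unique weak solution of the Robin problem $\A z=0$ in $\Omega$, $\mathcal B z=\beta g$ on $\partial\Omega$; existence and uniqueness follow from the Lax--Milgram theorem applied to the coercive bilinear form $\int_\Omega(\kappa\nabla z\cdot\nabla v+\mu_a z\,v)\,dx+\int_{\partial\Omega}\kappa\beta z\,v\,dS$, the load $v\mapsto\int_{\partial\Omega}\kappa\beta g\,v\,dS$ being a bounded functional on $H^1(\Omega)$ for $g\in L^2(\partial\Omega)$ by the trace theorem. I then test the weak formulation with $v=\varphi_n$ and integrate by parts back onto $\varphi_n$; this is legitimate because $\varphi_n\in H^2(\Omega)$ and $z\in H^1(\Omega)$. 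The two boundary integrals cancel precisely because $\frac{\partial\varphi_n}{\n}=-\beta\varphi_n$, while the interior term collapses via $\A\varphi_n=\lambda_n\varphi_n$, yielding the identity $\lambda_n\l z,\varphi_n\ro=\l\beta g,\varphi_n\rp$.

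Combining this with the hypothesis gives $\lambda_n\l z,\varphi_n\ro=0$, and since $\lambda_n>0$ we obtain $\l z,\varphi_n\ro=0$ for all $n$. As $\{\varphi_n\}$ is an orthonormal basis of $L^2(\Omega)$, this forces $z=0$. Substituting $z=0$ back into the weak formulation leaves $\int_{\partial\Omega}\kappa\beta g\,v\,dS=0$ for all $v\in H^1(\Omega)$; because the traces $v|_{\partial\Omega}$ are dense in $L^2(\partial\Omega)$ and $\kappa\beta>0$, we conclude $g=0$, which proves the asserted density.

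The only genuinely delicate point is regularity: for merely $L^2$ boundary data the solution $z$ need not belong to $H^2(\Omega)$, so a direct application of Green's second identity is unavailable. The device that removes this obstacle is to keep $z$ at the $H^1$ level throughout and to move all derivatives onto the smooth factor $\varphi_n\in H^2(\Omega)$ when integrating by parts, so that every integral is well defined and the crucial cancellation of boundary terms is rigorous. A secondary bookkeeping issue is the complex conjugation in the sesquilinear inner products; this is handled by the property already recorded in the excerpt that $\{\varphi_n\}$ coincides with $\{\overline{\varphi_n}\}$, so the span $\s\{\frac{\partial\varphi_n}{\n}\}$ is conjugation-invariant and the argument applies to the real and imaginary parts of $g$ alike.
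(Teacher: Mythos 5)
Your proof is correct, and it is built on the same two pillars as the paper's: the Hilbert-space duality criterion, and an auxiliary solution of the homogeneous equation $\A z=0$ with Robin data manufactured from the putative annihilating element, whose eigenfunction coefficients are then killed via integration by parts and the relation $\frac{\partial \varphi_n}{\n}=-\beta\varphi_n$. The execution differs in a way worth noting. The paper first replaces a general annihilator by one in $H^{3/2}(\partial\Omega)$ so that the auxiliary solution lies in $H^2(\Omega)$ and the symmetric Green identity applies; you instead keep the annihilator $g$ in $L^2(\partial\Omega)$, obtain the auxiliary solution only in $H^1(\Omega)$ by Lax--Milgram, and integrate by parts asymmetrically, moving every derivative onto $\varphi_n\in H^2(\Omega)$. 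Your route buys rigor precisely where the paper's argument is thinnest: showing that no nonzero $H^{3/2}$ function annihilates $\text{Span}\{\frac{\partial\varphi_n}{\n}\}$ does not by itself exclude a nonzero annihilator in $L^2(\partial\Omega)\setminus H^{3/2}(\partial\Omega)$, whereas your argument tests an arbitrary $L^2$ annihilator directly. The price is that you must justify the coercivity of the Robin bilinear form and the $H^1$-against-$H^2$ Green identity, both of which you correctly flag and both of which are standard. Your endgame (conclude $z=0$ from completeness of $\{\varphi_n\}$ in $L^2(\Omega)$, then read off $g=0$ from the weak formulation and the density of $H^1$ traces in $L^2(\partial\Omega)$) is equivalent to the paper's use of the trace theorem, and your remark on conjugation invariance of the span disposes of the sesquilinearity issue.
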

\begin{proof}
 Not hard to see that $H^{3/2}(\partial\Omega)$ is dense in  $L^2(\partial \Omega)$ under the norm $\|\cdot\|_{L^2(\partial\Omega)}$. So it is sufficient to show $\tilde\psi\in H^{3/2}(\partial \Omega)$ vanishes almost everywhere on $\partial \Omega$ if $\l \tilde\psi,\frac{\partial\varphi_n}{\n}\rp=0$ for $n\in\mathbb{N}^+$.

We set $\psi$ be the weak solution of the system below:
\begin{equation*}
\begin{cases}
\begin{aligned}
   \A \psi(x)&=0, &&x\in \Omega,\\
    \mathcal B\psi&=\tilde \psi,&&x\in\partial \Omega.
\end{aligned}
\end{cases}
\end{equation*}
We have $\psi\in H^2(\Omega)$ from the regularity $\tilde\psi\in H^{3/2}(\partial \Omega)$, sequentially the Green's identity can be used. For $n\in\mathbb N^+$, we have
\begin{align*}
\l \A\psi,\varphi_n\ro-\l \psi, \A\varphi_n\ro
&=\l \psi,\frac{\partial\varphi_n}{\n}\rp-\l\frac{\partial\psi}{\n}, \varphi_n\rp\\
&=\l \psi+\beta\frac{\partial\psi}{\n},\frac{\partial\varphi_n}{\n}\rp\\
&=\l \tilde\psi,\frac{\partial\varphi_n}{\n}\rp.
\end{align*}
From $\A \psi=0$ on $\Omega$ and the fact $\l \tilde\psi,\frac{\partial\varphi_n}{\n}\rp=0$, we have $$\l\psi,\A\varphi_n\ro =\lambda_n\l \psi,\varphi_n\ro=0.$$
So we have proved that for each $n\in\mathbb N^+$, $\l\psi, \varphi_n\ro=0$. Recalling the completeness of $\{\varphi_n\}_{n=1}^\infty$ in $L^2(\Omega)$, it holds that  $\|\psi\|_{L^2(\Omega)}=0$.
From the definition of weak derivative and Sobolev space, we have $\|\psi\|_{H^2(\Omega)}=0$. By the  continuity of the trace operator, it gives that
$$\|\psi\|_{L^2(\partial \Omega)}\le C\|\psi\|_{H^2(\Omega)}=0,\quad
\Big\|\frac{\partial\psi}{\n}\Big\|_{L^2(\partial\Omega)}
\le C\|\psi\|_{H^2(\Omega)}=0.$$
This means that $\tilde\psi=0$ almost everywhere on $\partial\Omega$ and the proof is complete.
\end{proof}

\subsection{The auxiliary functions $\{\xi_l\}_{l=1}^\infty$ and the coefficients $\{c_{z,n}\}$.}
From the above lemma, we are allowed to construct the orthonormal basis $\{\tilde \xi_l\}_{l=1}^\infty$ in $L^2(\kappa,\partial \Omega)$. Firstly we set $\tilde\xi_1=\frac{\partial\varphi_1}{\n}|_{\partial \Omega}/\|\frac{\partial\varphi_1}{\n}\|_{L^2(\kappa,\partial \Omega)},$ and assume that the orthonormal set $\{\tilde\xi_j\}_{j=1}^{l-1}$ has been built for $l=2,3,\cdots$. Then we set $n_l\in\mathbb N^+$ be the smallest number such that  $\frac{\partial\varphi_{n_l}}{\n}|_{\partial \Omega}\notin \s\{\tilde\xi_j\}_{j=1}^{l-1}$, and pick $\tilde\xi_l\in \s\{\frac{\partial\varphi_{n_l}}{\n}|_{\partial \Omega},\ \tilde\xi_1, \cdots, \tilde\xi_{l-1}\}$ satisfying
\begin{equation*}
 \l\tilde\xi_l, \tilde\xi_j\rp=0\ \text{for}\ j=1,\cdots, l-1,\ \text{and}\ \|\tilde\xi_l\|_{L^2(\kappa,\partial \Omega)}=1.
\end{equation*}
The density of $\text{Span}\{\frac{\partial\varphi_n}{\n}|_{\partial \Omega}\}_{n=1}^\infty$ in $L^2(\kappa,\partial \Omega)$ yields that $\{\tilde\xi_l\}_{l=1}^\infty$ is an orthonormal basis in $L^2(\kappa,\partial \Omega)$. Also, we have
$\tilde \xi_l\in H^{1/2}(\partial \Omega)$ for each $l\in\mathbb N^+$.

Next, for $l\in \mathbb N^+$, we define $\xi_l\in H^1( \Omega)$ be the weak solution of the system:
\begin{equation}\label{PDE_xi}
\begin{cases}
\begin{aligned}
  \A\xi_l(x)&=0, &&x\in \Omega,\\
    \mathcal B\xi_l&=\tilde \xi_l,&&x\in\partial \Omega.
\end{aligned}
\end{cases}
\end{equation}
Fixing $z\in\partial \Omega$, we define the series
$\psi_z^N\in H^1(\Omega)$ as
\begin{equation}\label{psi_z^N}
 \psi_z^N(x)=\sum_{l=1}^N \tilde\xi_l(z)\overline{\xi_l(x)} , \ x\in  \Omega.
\end{equation}
The finite summation $\psi_z^N$ is constructed following the role of Dirac delta function, which reflects the information of the targeted function on a specific point via integration.
With the following lemma, we can give the coefficients  $\{c_{z,n}\}$.
\begin{lemma}\label{c_z,n}
 For each $z\in\partial \Omega$ and $n\in \mathbb N^+$, $\lim_{N\to \infty}\l \psi_z^N,\overline{\varphi_n}\ro$ exists and we denote the limit by $c_{z,n}$.
\end{lemma}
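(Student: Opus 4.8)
The plan is to reduce the inner product $\l\psi_z^N,\overline{\varphi_n}\ro$ to a boundary Fourier-type partial sum, and then to exploit the \emph{greedy ordering} built into the Gram--Schmidt construction of $\{\tilde\xi_l\}$ to show that this partial sum is eventually constant in $N$, so that the limit exists for the trivial reason that the sequence stabilizes.

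First I would apply Green's identity exactly as in the proof of the preceding density lemma, but now to the pair $\xi_l$ (the weak solution of \eqref{PDE_xi}) and $\varphi_n$. Since $\A\xi_l=0$ in $\Omega$, $\mathcal B\xi_l=\tilde\xi_l$ on $\partial\Omega$, and $\A\varphi_n=\lambda_n\varphi_n$ with $\lambda_n$ real, the identity $\l\A\xi_l,\varphi_n\ro-\l\xi_l,\A\varphi_n\ro=\l\tilde\xi_l,\tfrac{\partial\varphi_n}{\n}\rp$ collapses to $-\lambda_n\l\xi_l,\varphi_n\ro=\l\tilde\xi_l,\tfrac{\partial\varphi_n}{\n}\rp$, that is,
\[
\l\xi_l,\varphi_n\ro=-\frac{1}{\lambda_n}\,\l\tilde\xi_l,\tfrac{\partial\varphi_n}{\n}\rp .
\]
Taking complex conjugates and using $\l\overline f,\overline g\ro=\overline{\l f,g\ro}$ gives $\l\overline{\xi_l},\overline{\varphi_n}\ro=-\tfrac{1}{\lambda_n}\l\tfrac{\partial\varphi_n}{\n},\tilde\xi_l\rp$. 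Substituting the definition \eqref{psi_z^N} of $\psi_z^N$ then yields
\[
\l\psi_z^N,\overline{\varphi_n}\ro=-\frac{1}{\lambda_n}\sum_{l=1}^N \tilde\xi_l(z)\,\l\tfrac{\partial\varphi_n}{\n},\tilde\xi_l\rp ,
\]
so that the scalars $a_l:=\l\tfrac{\partial\varphi_n}{\n},\tilde\xi_l\rp$ are precisely the Fourier coefficients of $\tfrac{\partial\varphi_n}{\n}$ in the orthonormal basis $\{\tilde\xi_l\}_{l=1}^\infty$ of $L^2(\kappa,\partial\Omega)$.

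The decisive step is to show that, for each fixed $n$, only finitely many $a_l$ are nonzero. Here I would revisit the construction of $\{\tilde\xi_l\}$: at stage $l$ one selects $n_l$ as the \emph{smallest} index with $\tfrac{\partial\varphi_{n_l}}{\n}\notin\s\{\tilde\xi_j\}_{j=1}^{l-1}$. A short induction shows the $n_l$ are strictly increasing: once $\tfrac{\partial\varphi_{n_l}}{\n}$ has entered the span, every index $\le n_l$ is spanned, so the next admissible index exceeds $n_l$; consequently $n_l\to\infty$. Thus for the given $n$ there is a finite $L=L(n)$ with $n<n_{L+1}$, and the minimality of $n_{L+1}$ forces $\tfrac{\partial\varphi_n}{\n}\in\s\{\tilde\xi_j\}_{j=1}^{L}$. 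By orthonormality this means $a_l=0$ for all $l>L$, so the partial sums stabilize for $N\ge L$ and
\[
c_{z,n}=\lim_{N\to\infty}\l\psi_z^N,\overline{\varphi_n}\ro=-\frac{1}{\lambda_n}\sum_{l=1}^{L(n)}\tilde\xi_l(z)\,\l\tfrac{\partial\varphi_n}{\n},\tilde\xi_l\rp .
\]

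The only delicate point—and the one I expect to be the main obstacle if attacked head-on—is the convergence of this boundary series: pointwise summation of the Fourier expansion of an $H^{1/2}(\partial\Omega)$ function at the single point $z$ is \emph{not} guaranteed by $L^2$ theory. The observation above sidesteps the difficulty completely, because the greedy ordering makes the expansion of each individual $\tfrac{\partial\varphi_n}{\n}$ terminate after finitely many terms, reducing the claimed limit to the trivial stabilization of a finite sum.
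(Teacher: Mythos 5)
Your proof is correct and follows essentially the same route as the paper's: Green's identity reduces $\l\psi_z^N,\overline{\varphi_n}\ro$ to the partial sum $-\lambda_n^{-1}\sum_{l=1}^N\tilde\xi_l(z)\l\frac{\partial\varphi_n}{\n},\tilde\xi_l\rp$, which stabilizes because $\l\frac{\partial\varphi_n}{\n},\tilde\xi_l\rp=0$ for large $l$. The only difference is that you spell out, via the monotonicity of the greedy indices $n_l$, why each $\frac{\partial\varphi_n}{\n}$ lies in a finite span $\s\{\tilde\xi_j\}_{j=1}^{L(n)}$ — a detail the paper merely asserts "from the definition of $\{\tilde\xi_l\}$" — and your induction establishing it is sound.
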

\begin{proof}
 From Green's identities we have
 \begin{equation*}
 \begin{aligned}
  \l \psi_z^N,\overline{\varphi_n}\ro&=\lambda_n^{-1}\l\psi_z^N, \A\overline{\varphi_n}\ro \\
  &=\lambda_n^{-1}\Big( \l \A\psi_z^N,\overline{\varphi_n}\ro-\l\psi_z^N,\frac{\partial\overline{\varphi_n}}{\n}\rp+\l\frac{\partial\psi_z^N}{\n},\overline{\varphi_n}\rp\Big) \\
  &=-\lambda_n^{-1}\l\beta\frac{\partial\psi_z^N}{\n}+\psi_z^N,\frac{\partial\overline{\varphi_n}}{\n}\rp\\
  &=-\lambda_n^{-1}\sum_{l=1}^N \tilde\xi_l(z)\l\frac{\partial\varphi_n}{\n}, \tilde\xi_l\rp=:c_{z,n}^N,
  \end{aligned}
 \end{equation*}
where the system \eqref{PDE_xi} and the boundary condition of $\varphi_n$ are used. From the definition of $\{\tilde\xi_l\}_{l=1}^\infty$, we have   $\l\frac{\partial\varphi_n}{\n}, \tilde\xi_l\rp=0$ for large $l$. Hence the value of $c_{z,n}^N$ will not change if $N$ is sufficiently large. This gives that $\lim_{N\to \infty}\l \psi_z^N,\overline{\varphi_n}\ro$ exists and the proof is complete.
\end{proof}

Then we consider the following system and denote the solutions by $\{u_z^N\}$:
\begin{equation}\label{u_z^N}
\begin{cases}
 \begin{aligned}
  (c^{-1}\partial_t+\A)u_z^N(x,t)&=0, &&(x,t)\in \Omega\times(0,\infty),\\
 \mathcal Bu_z^N(x,t)&=0, &&(x,t)\in \partial \Omega\times(0,\infty),\\
  u_z^N (x,0)&=-\psi_z^N, &&x\in \Omega.
 \end{aligned}
\end{cases}
\end{equation}

With the coefficients $\{c_{z,n}\}$ and defining $p_{k,n}:=\l p_k(\cdot),\varphi_n(\cdot)\ro$, we give the next assumptions, which will be used in the proof of uniqueness and conditional stability.
\begin{assumption}\label{condition_f_regularity}
\hfill
\begin{itemize}
\item [(a)] For $k\in\{1,\cdots,K\}$ and a.e. $z\in\partial\Omega$, there exists $C>0$ which is independent of $N$ such that $\sum_{n=1}^\infty  |c^N_{z,n}p_{k,n}|<C<\infty$ for each $N\in\mathbb N^+$.
\item [(b)] For a.e. $z\in \partial \Omega$, it holds that $\sum_{k=1}^K\sum_{n=1}^\infty |c_{z,n}p_{k,n}|<\infty$.
\item [(c)] For a.e. $t\in[0,\infty)$, the spatial components $\{p_k(x)\}_{k=1}^K$ are smooth enough such that the series
$\sum_{l=1}^\infty\tilde\xi_l(x)\l\frac{\partial u}{\n}(\cdot, t),\tilde\xi_l(\cdot)\rp$ converges to $\frac{\partial u}{\n}(x, t)$ pointwisely for a.e. $x\in\partial\Omega$.
\end{itemize}
\end{assumption}

\subsection{General settings of deep neural networks.}

For better readability, we introduce some general settings of deep neural networks. Numerically, we parameterize the excitation $u_e$, emission $u_m$ as well as the unknown $\mu_f$ by deep neural networks, given by $u_{e,\theta_e}$, $u_{m,\theta_m}$ and $\mu_{f,\theta_f}$, respectively. Here $\theta_e$, $\theta_m$ and $\theta_f$ are the corresponding network parameters. The loss functions to determine the optimal network parameters and related generalization error estimates will be investigated in Section \ref{sec_pinn}, whereas we give the network architecture and the quadrature rule in computing the loss functions.



{\bf{Network architecture.}} Noting that $u_{e,\theta_e}$, $u_{m,\theta_m}$ and $\mu_{f,\theta_f}$ are several separate networks with  arguments $(x,t)$.
Thus, we use $\xi$ to denote collectively the network parameters which are bounded for a parametric function $s_\xi(z)$
such that a general scheme can be applied for either $u_{e,\theta_e}(x,t)$, $u_{m,\theta_m}(x,t)$
or $\mu_{f,\theta_f}(x,t)$ (with $z=(x,t),\xi=\theta$).

For a positive integer $K\in \mathbb{N}$, a $K$-layer feed-forward neural
network of $s_\xi(z)$ for $z\in\mathbb{R}^{d_0}$ is a function defined by
\begin{eqnarray}\label{DeepNN}
s_\xi(z):=W_{K} l_{K-1} \circ \cdots \circ l_{1}(z)+b_{K},
\end{eqnarray}
where the $k$-th layer $l_{k}: \mathbb{R}^{d_{k-1}} \rightarrow \mathbb{R}^{d_{k}}$
is given by $l_{k}(z)=\sigma\left(W_{k} z+b_{k}\right)$ with weights
$W_{k} \in \mathbb{R}^{d_{k} \times d_{k-1}}$ and biases
$b_{k} \in \mathbb{R}^{d_{k}}$ for $k=2, \cdots, K,$ and $\sigma(\cdot)$ is the activation function.
The network parameters of all layers are collectively denoted by
\begin{eqnarray*}
\xi:=\left(W_{K}, b_{K}, W_{K-1}, b_{K-1}, \cdots, W_{1}, b_{1}\right).
\end{eqnarray*}

{\bf{Quadrature rules \cite{Mishra2}.}} We also need to state the quadrature rules applied for approximately computing the integrals in the loss function. Consider a mapping $h: \mathbb{D}\subset \mathbb{R}^{\bar d} \mapsto \mathbb{Y}\subset \mathbb{R}^{m}$. To approximate the integral
 \begin{eqnarray*}
 \bar{h}:=\int_{\mathbb{D}} h(y) d y
 \end{eqnarray*}
 numerically by a quadrature rule with $d y$ being the $\bar{d}-$dimensional Lebesgue measure,
 we need the quadrature points $y_{i} \in \mathbb{D}$ for $i=1,2,\cdots, N$
 as well as the weights $w_{i}\in \mathbb{R}_{+}$. Then a quadrature formula is defined by
 \begin{eqnarray*}\label{integral}
 \bar{h}_{N}:=\sum_{i=1}^{N} w_{i} h\left(y_{i}\right).
 \end{eqnarray*}
 We further assume that the quadrature error is
 \begin{eqnarray}\label{quadrature}
 \left|\bar{h}-\bar{h}_{N}\right| \leqslant C_{q}\left(\|h\|_{\mathbb{Y}}, \bar{d}\right) N^{-\alpha}
 \end{eqnarray}
 for some $\alpha>0$, which is well-known for several classical quadrature formulas.
%

\section{The uniqueness and conditional stability.}\label{section_unique}
\subsection{Representation of the forward operator.}
To investigate the inverse problem, we firstly give the representation formula of the forward operator, which maps the unknown source to the boundary flux data. With a small $\epsilon>0$ we give the following equation
\begin{equation}\label{PDE2}
 \begin{cases}
  \begin{aligned}
  (c^{-1}\partial_t+\A)u(x,t)&=p(x),&&  (x,t)\in \Omega\times(0,\epsilon),\\
   u(x,t)&=0, &&(x,t)\in \partial\Omega\times(0,\epsilon)\cup \Omega\times\{0\},
  \end{aligned}
 \end{cases}
 \end{equation}
and prove the next lemma for equation \eqref{PDE2}.

\begin{lemma}
We define $w_z^N=u_z^N+\psi_z^N$, where $u_z^N$ and $\psi_z^N$ satisfy \eqref{u_z^N} and \eqref{psi_z^N} respectively. Then for the solution $u$ of equation \eqref{PDE2}, we have
 \begin{equation*}
  -\frac{\partial u}{\n}(z,t)=\lim_{N\to\infty}\int_\Omega p(x) w_z^N(x,t)\ dx.
 \end{equation*}
\end{lemma}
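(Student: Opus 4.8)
The plan is to bypass a direct space--time duality argument and instead project everything onto the Robin eigenbasis $\{\varphi_n\}$, so that the coefficients $c_{z,n}^N$ computed in Lemma \ref{c_z,n} can be substituted directly. Using the decomposition $w_z^N=u_z^N+\psi_z^N$ I would first split
\[
\int_\Omega p\,w_z^N(x,t)\,dx=\int_\Omega p\,u_z^N(x,t)\,dx+\int_\Omega p\,\psi_z^N(x)\,dx .
\]
The series $\psi_z^N=\sum_{l=1}^N\tilde\xi_l(z)\overline{\xi_l}$ has $n$-th eigencoefficient $\l\psi_z^N,\overline{\varphi_n}\ro=c_{z,n}^N$, so $\psi_z^N=\sum_n c_{z,n}^N\varphi_n$; and since $u_z^N$ solves \eqref{u_z^N}, a homogeneous problem with homogeneous Robin condition and initial datum $-\psi_z^N$, the semigroup acts diagonally on the basis and $u_z^N(\cdot,t)=-\sum_n c_{z,n}^N e^{-c\lambda_n t}\varphi_n$. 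Pairing with $p$ and abbreviating $p_n:=\l p,\varphi_n\ro$, the static and the decaying contributions combine into
\[
\int_\Omega p\,w_z^N(x,t)\,dx=\sum_n c_{z,n}^N\,p_n\bigl(1-e^{-c\lambda_n t}\bigr).
\]

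Next I would substitute the closed form $c_{z,n}^N=-\lambda_n^{-1}\sum_{l=1}^N\tilde\xi_l(z)\,\l\frac{\partial\varphi_n}{\n},\tilde\xi_l\rp$ from Lemma \ref{c_z,n} and interchange the summations. For each fixed $N$ the $l$-sum is finite (and by construction $\l\frac{\partial\varphi_n}{\n},\tilde\xi_l\rp=0$ once $l$ is large relative to $n$), so the rearrangement produces
\[
\int_\Omega p\,w_z^N(x,t)\,dx=-\sum_{l=1}^N\tilde\xi_l(z)\,\Big\langle\sum_n\frac{p_n}{\lambda_n}\bigl(1-e^{-c\lambda_n t}\bigr)\frac{\partial\varphi_n}{\n},\ \tilde\xi_l\Big\rangle_{\kappa,\partial\Omega}.
\]
To identify the inner series I would expand the solution $u$ of \eqref{PDE2} in the eigenbasis, $u(\cdot,t)=\sum_n u_n(t)\varphi_n$ with $u_n(t)=\frac{p_n}{\lambda_n}(1-e^{-c\lambda_n t})$, whence $\frac{\partial u}{\n}(\cdot,t)=\sum_n u_n(t)\frac{\partial\varphi_n}{\n}$ by termwise differentiation. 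This collapses the identity to $\int_\Omega p\,w_z^N(x,t)\,dx=-\sum_{l=1}^N\tilde\xi_l(z)\,\l\frac{\partial u}{\n}(\cdot,t),\tilde\xi_l\rp$. Finally, letting $N\to\infty$ and invoking Assumption \ref{condition_f_regularity}(c) at the point $x=z$ --- which is exactly the statement that $\sum_{l}\tilde\xi_l(x)\l\frac{\partial u}{\n}(\cdot,t),\tilde\xi_l\rp$ converges pointwise a.e. on $\partial\Omega$ to $\frac{\partial u}{\n}(x,t)$ --- yields the claimed limit $-\frac{\partial u}{\n}(z,t)$.

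Two structural facts underlie the computation and are worth recording first: since each $\xi_l$ is $\A$-harmonic, $\A\psi_z^N=0$, so $w_z^N$ solves $(c^{-1}\partial_t+\A)w_z^N=0$ with zero initial datum and Robin trace $\mathcal B w_z^N=\sum_{l=1}^N\tilde\xi_l(z)\overline{\tilde\xi_l}$; this exhibits $w_z^N$ as the parabolic response to an approximate boundary point source at $z$ and explains why the limit reproduces a pointwise boundary value. The main obstacle, however, is analytic rather than algebraic: justifying the interchange of the $n$-summation with the boundary inner product and with the limit in $N$, together with the termwise identification of $\frac{\partial u}{\n}$. This is precisely what the hypotheses are tailored for --- Assumption \ref{condition_f_regularity}(a) supplies the uniform-in-$N$ absolute convergence $\sum_n|c_{z,n}^N p_n|<C$ that legitimizes the rearrangement for each $N$, while Assumption \ref{condition_f_regularity}(c) furnishes both the mode of convergence of the normal-derivative series and the final pointwise passage to the limit at $z$.
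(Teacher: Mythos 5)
Your argument is correct, but it takes a genuinely different route from the paper's. The paper never diagonalizes: it forms the convolution $\int_0^t\int_\Omega p(x)\,w_z^N(x,t-\tau)\,dx\,d\tau$, applies Green's identities in space (and the vanishing initial data in time) to move $(c^{-1}\partial_t+\A)$ from $u$ onto $w_z^N$, reads off the surviving boundary term $-\int_0^t\sum_{l=1}^N\tilde\xi_l(z)\,\l\frac{\partial u}{\n}(\cdot,\tau),\tilde\xi_l\rp\,d\tau$ from the Robin data of $w_z^N$, and then differentiates in $t$; Assumption \ref{condition_f_regularity}~(c) finishes exactly as in your last step. You instead compute both sides spectrally, obtaining $\int_\Omega p\,w_z^N\,dx=\sum_n c_{z,n}^N p_n(1-e^{-c\lambda_n t})$ and matching it against the eigenexpansion of $\frac{\partial u}{\n}$ --- in effect you establish the content of Corollary \ref{data_formula} first and specialize it back to the partial sums. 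The two routes meet at the same key identity $\int_\Omega p\,w_z^N(x,t)\,dx=-\sum_{l=1}^N\tilde\xi_l(z)\,\l\frac{\partial u}{\n}(\cdot,t),\tilde\xi_l\rp$. What your route buys is an explicit closed form (and, incidentally, the correct placement of the factor $c$ in the exponent, which the paper silently drops); what it costs is the extra step of taking normal traces of the eigenexpansion of $u(\cdot,t)$ term by term. You flag this obstacle but attribute it to Assumption \ref{condition_f_regularity}~(c), which only governs the $\tilde\xi_l$-expansion of $\frac{\partial u}{\n}$ on $\partial\Omega$, not the $\varphi_n$-expansion of $u$ in $\Omega$; likewise Assumption \ref{condition_f_regularity}~(a) controls the specific combination $c_{z,n}^N p_n$, not the individual pairings $\lambda_n^{-1}p_n\l\frac{\partial\varphi_n}{\n},\tilde\xi_l\rp$ you rearrange. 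The gap is closable without new hypotheses: since $|1-e^{-c\lambda_n t}|\le 1$, one has $\sum_n\lambda_n^2|u_n(t)|^2\le\|p\|_{L^2(\Omega)}^2$, so the partial sums of $u(\cdot,t)$ converge in the graph norm of $\A$, hence in $H^2(\Omega)$ by elliptic regularity, and continuity of the normal trace $H^2(\Omega)\to L^2(\partial\Omega)$ passes the limit through the pairing with each $\tilde\xi_l\in L^2(\kappa,\partial\Omega)$. The paper's Green's-identity route sidesteps this issue entirely, which is presumably why it was chosen; you should add the $H^2$-convergence remark if you keep the spectral route.
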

\begin{proof}
 From the definition of $\psi_z^N$, we see that
 $ (c^{-1}\partial_t+\A)\psi_z^N=0.$ This result and \eqref{u_z^N} show that $w_z^N$ satisfies the equation
\begin{equation*}
 (c^{-1}\partial_t+\A)w_z^N=0, \  (x,t)\in \Omega\times(0,T),
\end{equation*}
with the boundary condition $w_z^N|_{\partial \Omega}=\psi_z^N|_{\partial \Omega}$ and the initial condition $w_z^N(x,0)=0$. Therefore, Green's identities give that for each $v\in H^1(\Omega)$ with the boundary condition $\mathcal B v=0$,
\begin{equation*}
 \int_\Omega (c^{-1}\partial_t+q)w_z^N(x,t) v(x)
 +\kappa\nabla w_z^N(x,t) \cdot \nabla v(x)\ dx
 -\int_{\partial \Omega} \kappa v(x)\frac{\partial w_z^N}{\n}(x,t)\ dx=0,\ \ t\in (0,T).
\end{equation*}

From equation \eqref{PDE2}, we obtain
\begin{equation*}
 \begin{aligned}
  \int_0^t \int_\Omega p(x) w_z^N(x,t-\tau)\ dx\ d\tau=\int_0^t\int_\Omega (c^{-1}\partial_t+\A)u(x,\tau)\ w_z^N(x,t-\tau)\ dx\ d\tau.
 \end{aligned}
\end{equation*}
Green's identities and the vanishing initial conditions of $u$ and $w_z^N$ give that
\begin{equation*}
 \begin{aligned}
 \int_0^t\int_\Omega (c^{-1}\partial_t+q) u(x,\tau)\ w_z^N(x,t-\tau)\ dx\ d\tau
 =& \int_0^t\int_\Omega (c^{-1}\partial_t+q) w_z^N(x,t-\tau)\ u(x,\tau)\ dx\ d\tau,\\
 \int_0^t\int_\Omega -\nabla\cdot(\kappa\nabla u(x,\tau))\ w_z^N(x,t-\tau)\ dx\ d\tau
 =& \int_0^t\int_\Omega \kappa(x)\nabla u(x,\tau)\cdot\nabla w_z^N(x,t-\tau)\ dx\ d\tau \\
 &-\int_0^t \int_{\partial \Omega} \kappa(x)\frac{\partial u}{\n}(x,\tau)w_z^N(x,t-\tau\ dx\ d\tau.
\end{aligned}
\end{equation*}
For the boundary condition of $w_z^N(x)$, we have that
\begin{equation*}
 \mathcal B w_z^N=\sum_{l=1}^N \tilde \xi_l(z) \overline{\tilde \xi_l(x)},\quad x\in\partial\Omega.
\end{equation*}
Then it holds that
\begin{align*}
 &\int_0^t \int_{\partial \Omega} \kappa(x)\frac{\partial u}{\n}(x,\tau)w_z^N(x)\ dx\ d\tau\\
 &=\int_0^t \sum_{l=1}^N \tilde\xi_l(z)\ \big\l\frac{\partial u}{\n}(\cdot,\tau),\tilde\xi_l(\cdot) \big\rp \ d\tau-\int_0^t \int_{\partial \Omega} \beta\kappa(x)\frac{\partial u}{\n}(x,\tau)\frac{\partial w_z^N}{\n}(x,t-\tau)\ dx\ d\tau\\
 &=\int_0^t \sum_{l=1}^N \tilde\xi_l(z)\ \big\l\frac{\partial u}{\n}(\cdot,\tau),\tilde\xi_l(\cdot) \big\rp \ d\tau+\int_0^t \int_{\partial \Omega} \kappa(x)u(x,\tau)\frac{\partial w_z^N}{\n}(x,t-\tau)\ dx\ d\tau,
\end{align*}
where the boundary condition $\mathcal B u=0$ is used.
Hence we have
 \begin{align*}
  &\int_0^t \int_\Omega p(x) w_z^N(x,t-\tau)\ dx\ d\tau\\
  &=\int_0^t\int_\Omega (c^{-1}\partial_t+q) w_z^N(x,t-\tau)\ u(x,\tau)
  +\kappa(x)\nabla w_z^N(x,t-\tau)\cdot\nabla u(x,\tau) \ dx\ d\tau\\
  &\quad +\int_0^t \int_{\partial \Omega} \kappa(x)u(x,\tau)\frac{\partial w_z^N}{\n}(x,t-\tau)\ dx\ d\tau -\int_0^t \sum_{l=1}^N \tilde\xi_l(z)\ \big\l\frac{\partial u}{\n}(\cdot,\tau),\tilde\xi_l(\cdot) \big\rp \ d\tau\\
 &=-\int_0^t \sum_{l=1}^N \tilde\xi_l(z)\ \big\l\frac{\partial u}{\n}(\cdot,\tau),\tilde\xi_l(\cdot) \big\rp \ d\tau.
 \end{align*}

 For the above equality, differentiating at time $t$ gives that for $t\in (0,\epsilon)$,
 \begin{equation*}
  -\sum_{l=1}^N \tilde\xi_l(z)\ \big\l\frac{\partial u}{\n}(\cdot,t),\tilde\xi_l(\cdot) \big\rp =\int_\Omega p(x) w_z^N(x,t)\ dx.
 \end{equation*}

From Assumption \ref{condition_f_regularity} $(c)$ we have for a.e. $t\in(0,\epsilon)$,
\begin{equation*}
  -\frac{\partial u}{\n}(z,t)=\lim_{N\to\infty}\int_\Omega p(x) w_z^N(x,t)\ dx.
 \end{equation*}
 The proof is complete.
\end{proof}

With the coefficients $\{c_{z,n}\}$ defined in Lemma \ref{c_z,n}, we can deduce the next corollary straightforwardly.
\begin{corollary}\label{data_formula}
For a.e. $z\in\partial \Omega$ and $t\in(0,\epsilon)$, it holds that
\begin{equation*}
 -\frac{\partial u}{\n}(z, t)=\sum_{n=1}^\infty c_{z,n}p_n (1-e^{-\lambda_n\tau}),
 \end{equation*}
 where $u$ is the solution of equation \eqref{PDE2} and $p_n:=\l p(\cdot),\varphi_n(\cdot)\ro$.
\end{corollary}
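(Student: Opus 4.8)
The plan is to start from the representation proved in the previous lemma, namely $-\frac{\partial u}{\n}(z,t)=\lim_{N\to\infty}\int_\Omega p(x)w_z^N(x,t)\,dx$, and to evaluate the finite-$N$ integral as an explicit eigenseries before passing to the limit. First I would expand the solution $u$ of \eqref{PDE2} in the eigenbasis $\{\varphi_n\}$: writing $u(x,t)=\sum_n u_n(t)\varphi_n(x)$, each mode solves the scalar ODE $c^{-1}u_n'(t)+\lambda_n u_n(t)=p_n$ with $u_n(0)=0$, whence $u_n(t)=\lambda_n^{-1}p_n(1-e^{-c\lambda_n t})$. Taking the weighted normal trace termwise gives $\frac{\partial u}{\n}(\cdot,t)=\sum_n \lambda_n^{-1}p_n(1-e^{-c\lambda_n t})\frac{\partial\varphi_n}{\n}$, so pairing against $\tilde\xi_l$ and inserting into the lemma's finite identity $\int_\Omega p\,w_z^N(\cdot,t)\,dx=-\sum_{l=1}^N\tilde\xi_l(z)\l\frac{\partial u}{\n}(\cdot,t),\tilde\xi_l\rp$ yields, after swapping the finite $l$-sum with the $n$-sum and recalling $c_{z,n}^N=-\lambda_n^{-1}\sum_{l=1}^N\tilde\xi_l(z)\l\frac{\partial\varphi_n}{\n},\tilde\xi_l\rp$ from Lemma \ref{c_z,n},
\[
\int_\Omega p(x)\,w_z^N(x,t)\,dx=\sum_{n=1}^\infty c_{z,n}^N\,p_n\,(1-e^{-c\lambda_n t}).
\]
(Here the time-scaling $\tau=ct$ reconciles this with the exponent $e^{-\lambda_n\tau}$ written in the statement.)

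Second, I would let $N\to\infty$. By Lemma \ref{c_z,n} the coefficients $c_{z,n}^N$ stabilize to $c_{z,n}$ for each fixed $n$ (indeed $c_{z,n}^N=c_{z,n}$ once $N$ is large, since $\l\frac{\partial\varphi_n}{\n},\tilde\xi_l\rp$ vanishes for $l$ large), so the convergence is termwise and trivial; the real content is to exchange this limit with the infinite sum over $n$. This is exactly where Assumption \ref{condition_f_regularity}(a) enters: the uniform-in-$N$ bound $\sum_n|c_{z,n}^N p_n|<C$, together with $|1-e^{-c\lambda_n t}|\le 1$, furnishes a summable majorant independent of $N$, so the dominated convergence theorem (on the counting measure) gives $\lim_N\sum_n c_{z,n}^N p_n(1-e^{-c\lambda_n t})=\sum_n c_{z,n}p_n(1-e^{-c\lambda_n t})$. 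Combined with the lemma's left-hand side $\lim_N\int_\Omega p\,w_z^N\,dx=-\frac{\partial u}{\n}(z,t)$, valid for a.e.\ $z\in\partial\Omega$ and a.e.\ $t\in(0,\epsilon)$ thanks to Assumption \ref{condition_f_regularity}(c), this produces the claimed formula.

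The main obstacle is the interchange of the two limiting processes, the boundary cutoff index $N$ in the Dirac-type approximation $\psi_z^N$ and the spectral index $n$. The termwise normal-trace expansion of $\frac{\partial u}{\n}$ and its $L^2(\kappa,\partial\Omega)$-pairing with $\tilde\xi_l$ must first be justified (here the membership $\frac{\partial\varphi_n}{\n}\in H^{1/2}(\partial\Omega)$ and the pointwise boundary convergence built into Assumption \ref{condition_f_regularity}(c) are used), and then the uniform summability of Assumption \ref{condition_f_regularity}(a) is what legitimizes pulling $\lim_N$ inside the series. Once these two quantitative hypotheses are in force, the remainder is bookkeeping with the modal ODE and the Green's identities already established in the preceding lemma, which is why the corollary follows straightforwardly.
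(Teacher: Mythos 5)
Your proposal is correct and reaches the paper's key intermediate identity $\int_\Omega p(x)\,w_z^N(x,t)\,dx=\sum_{n} c_{z,n}^N p_n(1-e^{-\lambda_n t})$, followed by the same dominated-convergence step via Assumption \ref{condition_f_regularity}(a); but you derive that identity by a different, and somewhat more laborious, route. The paper never touches the normal derivative of $u$ again: it simply writes the Fourier expansion $\psi_z^N=\sum_n c_{z,n}^N\overline{\varphi_n}$, solves \eqref{u_z^N} explicitly to get $w_z^N(x,t)=\sum_n c_{z,n}^N(1-e^{-\lambda_n t})\overline{\varphi_n(x)}$, and applies Parseval against $p\in L^2(\Omega)$ --- a purely interior computation requiring no trace regularity. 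You instead expand $u$ itself in the eigenbasis, take the normal trace termwise, pair with $\tilde\xi_l$, and back-substitute into the finite-$N$ identity from inside the preceding lemma's proof; this forces you to justify the termwise boundary expansion of $\frac{\partial u}{\n}$ and an extra interchange of the $l$- and $n$-sums, technical burdens the paper's argument avoids entirely (and which you correctly flag as the main obstacle, though your appeal to Assumption (c) and (a) to patch them is slightly circular, since those hypotheses are phrased for the limiting objects rather than for the modal series of $u$). Your observation about the missing factor of $c$ in the exponent (and the stray $\tau$ in the statement) is a fair catch --- the paper's own proof also drops the $c$, effectively normalizing $c=1$. In short: same destination, same limit-passage, but the paper's finite-$N$ computation is the more economical one.
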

\begin{proof}
In this proof, let us the limit $\lim_{N\to \infty}\l  p(\cdot), \overline{w_z^N(\cdot,t)}\ro$.
 Fixing $N\in \mathbb{N}^+$, from the definition of $\psi_z^N$ we have
 $\psi_z^N\in L^2(\Omega)$. So the Fourier expansion of $\psi_z^N$ can be given as $\psi_z^N=\sum_{n=1}^\infty c_{z,n}^N \overline{\varphi_n}$, where $c_{z,n}^N$ is defined in the proof of Lemma \ref{c_z,n}. Moreover, from $w_z^N=u_z^N+\psi_z^N$ and equation \eqref{u_z^N}, we have
 \begin{equation*}
  w_z^N(x,t)=\sum_{n=1}^\infty c_{z,n}^N (1-e^{-\lambda_nt})\overline{\varphi_n(x)}.
 \end{equation*}
 The above result together with the regularity $\psi_z^N\in L^2(\Omega)$ yields that $w_z^N(\cdot,t)\in L^2(\Omega)$ for $t\in [0,\infty)$. Also recall that $p\in L^2(\Omega)$, then
 \begin{equation*}
 \l p(\cdot), \overline{w_z^N(\cdot,t)}\ro
  = \sum_{n=1}^\infty c_{z,n}^N p_n (1-e^{-\lambda_nt}).
 \end{equation*}
 The condition in Assumption \ref{condition_f_regularity} implies that the Dominated Convergence Theorem can be applied to the above series. So we have
\begin{equation*}
 \lim_{N\to\infty}\sum_{n=1}^\infty c_{z,n}^N p_n (1-e^{-\lambda_nt})= \sum_{n=1}^\infty \lim_{N\to\infty}c_{z,n}^N p_n (1-e^{-\lambda_nt}).
\end{equation*}
 From \eqref{c_z,n}, we have
 \begin{equation*}
 \lim_{N\to\infty}\l p(\cdot), \overline{w_z^N(\cdot,t)}\ro= \lim_{N\to\infty}\sum_{n=1}^\infty c_{z,n}^N p_n (1-e^{-\lambda_nt})=\sum_{n=1}^\infty c_{z,n} p_n (1-e^{-\lambda_nt}).
 \end{equation*}
The proof is complete.
\end{proof}

The next two lemmas will be used in the proof of the uniqueness.
\begin{lemma}\label{lemma_uniqueness_1}
Recall that $\{\lambda_j\}_{j=1}^\infty$ is the set of distinct eigenvalues with increasing order, and the coefficients $\{c_{z,n}\}$ are defined in Lemma \ref{c_z,n}.
For any nonempty open subset $\Gamma\subset\partial\Omega$, if $$\sum_{\lambda_n=\lambda_j} c_{z,n}\eta_n=0\ \text{for}\  j\in\mathbb{N}^+\ \text{and\ a.e.}\  z\in\Gamma,$$
then $\{\eta_n\}_{n=1}^\infty=\{0\}$.
\end{lemma}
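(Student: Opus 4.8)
The plan is to first make the coefficients $c_{z,n}$ completely explicit, then recast the hypothesis as a vanishing Cauchy-data condition for an eigenfunction, and finally close the argument by unique continuation.

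First I would read off from the proof of Lemma \ref{c_z,n} that
\[
c_{z,n}=-\lambda_n^{-1}\sum_{l=1}^\infty \tilde\xi_l(z)\,\l\tfrac{\partial\varphi_n}{\n},\tilde\xi_l\rp ,
\]
where for each fixed $n$ only finitely many terms are nonzero: the Gram--Schmidt construction of $\{\tilde\xi_l\}$ places $\frac{\partial\varphi_n}{\n}$ in the span of finitely many $\tilde\xi_l$, so $\l\frac{\partial\varphi_n}{\n},\tilde\xi_l\rp=0$ for large $l$. Because $\{\tilde\xi_l\}_{l=1}^\infty$ is an orthonormal basis of $L^2(\kappa,\partial\Omega)$ and $\kappa>0$ (so the spaces $L^2(\kappa,\partial\Omega)$ and $L^2(\partial\Omega)$ share their null sets), this finite sum is exactly the basis expansion of $\frac{\partial\varphi_n}{\n}$, yielding
\[
c_{z,n}=-\lambda_n^{-1}\,\frac{\partial\varphi_n}{\n}(z)\qquad\text{for a.e. }z\in\partial\Omega .
\]

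With this identity the hypothesis reads, for every $j\in\mathbb N^+$ and a.e. $z\in\Gamma$,
\[
\sum_{\lambda_n=\lambda_j}c_{z,n}\eta_n=-\lambda_j^{-1}\sum_{\lambda_n=\lambda_j}\eta_n\,\frac{\partial\varphi_n}{\n}(z)=0 ,
\]
and since $\lambda_j>0$ this is equivalent to $\sum_{\lambda_n=\lambda_j}\eta_n\frac{\partial\varphi_n}{\n}(z)=0$ on $\Gamma$. Fixing $j$, I would set $w_j:=\sum_{\lambda_n=\lambda_j}\eta_n\varphi_n$, a finite sum since each eigenspace is finite-dimensional. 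By construction $w_j$ solves $\A w_j=\lambda_j w_j$ in $\Omega$, obeys the Robin condition $\mathcal B w_j=0$ on $\partial\Omega$, and satisfies $\frac{\partial w_j}{\n}=0$ on $\Gamma$. Combining $\frac{\partial w_j}{\n}+\beta w_j=0$ with $\frac{\partial w_j}{\n}=0$ on $\Gamma$ and $\beta>0$ forces $w_j=0$ on $\Gamma$ as well, so $w_j$ has vanishing Cauchy data on the nonempty open set $\Gamma$.

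Finally I would invoke the unique continuation property for the second-order elliptic equation $-\nabla\cdot(\kappa\nabla w_j)+(\mu_a-\lambda_j)w_j=0$ --- the very mechanism behind Lemma \ref{nonempty_open}, which applies to the eigenfunction $w_j$ verbatim --- to conclude $w_j\equiv 0$ in $\Omega$. Orthonormality of $\{\varphi_n:\lambda_n=\lambda_j\}$ then gives $\eta_n=0$ for all such $n$, and letting $j$ range over $\mathbb N^+$ yields $\{\eta_n\}_{n=1}^\infty=\{0\}$. The main obstacle is this unique continuation step: it needs sufficient regularity of $\kappa$ and $\mu_a$ for a Cauchy-data UCP across a relatively open boundary patch, and one must check that Lemma \ref{nonempty_open}, stated for the fixed basis functions, truly extends to the combined eigenfunction $w_j$ --- it does, since $w_j/\|w_j\|_{L^2(\Omega)}$ may be taken as a member of an orthonormal basis of the $\lambda_j$-eigenspace. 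The identification of $c_{z,n}$ with a constant multiple of $\frac{\partial\varphi_n}{\n}(z)$ is the conceptual key; everything after it is a clean reduction.
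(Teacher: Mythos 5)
Your proposal is correct and follows essentially the same route as the paper: both identify $c_{z,n}$ with $-\lambda_j^{-1}\frac{\partial\varphi_n}{\n}(z)$ via the finiteness of the Gram--Schmidt expansion, reduce the hypothesis to the vanishing of $\sum_{\lambda_n=\lambda_j}\eta_n\frac{\partial\varphi_n}{\n}$ on $\Gamma$, and conclude by observing that the nonzero combination $\sum_{\lambda_n=\lambda_j}\eta_n\varphi_n$ is itself a $\lambda_j$-eigenfunction whose normal derivative cannot vanish on a nonempty open boundary patch (the paper cites Lemma \ref{nonempty_open} for this; you unpack the underlying unique continuation argument, correctly noting it applies to the combined eigenfunction).
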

\begin{proof}
Fixing $j\in \mathbb N^+$, from the proof of Lemma \ref{c_z,n}, we have
 \begin{equation*}
 \begin{aligned}
  c_{z,n}&=\lambda_j^{-1}\lim_{N\to \infty}  \l \psi_z^N,\A\overline{\varphi_n}\ro\\
  &=\lambda_j^{-1}\lim_{N\to \infty}\Big(\l \A\psi_z^N,\overline{\varphi_n}\ro-\l\psi_z^N,\frac{\partial\overline{\varphi_n}}{\n}\rp+\l\frac{\partial\psi_z^N}{\n},\overline{\varphi_n}\rp\Big)\\
  &=-\lambda_j^{-1}\sum_{l=1}^\infty \tilde\xi_l(z)\l\frac{\partial\varphi_n}{\n},\tilde\xi_l\rp.
  \end{aligned}
 \end{equation*}
With the definition of the orthonormal basis $\{\tilde \xi_l\}_{l=1}^\infty$, the above series is actually finite. This gives that for a.e. $z\in\Gamma$,
\begin{equation*}
 \sum_{l=1}^\infty \tilde\xi_l(z)\l \frac{\partial\varphi_n}{\n},\tilde\xi_l\rp=\frac{\partial\varphi_n}{\n}(z),
\end{equation*}
which leads to $\sum_{\lambda_n=\lambda_j}\eta_n\frac{\partial\varphi_n}{\n}(z)=0$ for a.e. $z\in\Gamma$. Assume that $\{\eta_n:\lambda_n=\lambda_j\}\ne\{0\}$, then the linear independence of the set $\{\varphi_n(x):\lambda_n=\lambda_j\}$ yields that $\sum_{\lambda_n=\lambda_j}\eta_n\varphi_n(x)$ is not vanishing on $\Omega$. However, we see that $\sum_{\lambda_n=\lambda_j}\eta_n\varphi_n(x)$ is an eigenfunction corresponding to the eigenvalue $\lambda_j$. These and Lemma \ref{nonempty_open} give that $\sum_{\lambda_n=\lambda_j}\eta_n\frac{\partial\varphi_n}{\n}$ can not vanish almost everywhere on $\Gamma$, which is a contradiction. Hence, we prove that $\{\eta_n:\lambda_n=\lambda_j\}=\{0\}$ for each $j\in\mathbb N^+$, namely, $\eta_n=0$ for $n\in\mathbb N^+$. The proof is complete.
\end{proof}

\begin{lemma}\label{lemma_uniqueness_2}
Set $\Gamma\subset\partial\Omega$ be a nonempty open subset and let $\sum_{n=1}^\infty c_{z,n} \eta_n$ be absolute convergent for a.e. $z\in \Gamma$. Then the result
$$\sum_{n=1}^\infty c_{z,n} \eta_n(1-e^{-\lambda_nt})=0\ \text{for}\ t\in(0,\epsilon)\ \text{and\ a.e.}\ z\in \Gamma$$
implies that $\eta_n=0$ for $n\in\mathbb N^+$.
\end{lemma}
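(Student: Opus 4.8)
The plan is to reduce the statement to Lemma \ref{lemma_uniqueness_1} by first collapsing multiplicities and then invoking a uniqueness principle for exponential (Dirichlet-type) series. Fix a point $z\in\Gamma$ lying in the full-measure set on which $\sum_{n=1}^\infty c_{z,n}\eta_n$ converges absolutely and the displayed identity holds. Grouping the terms that share a common eigenvalue, I would set
\[
a_j:=\sum_{\lambda_n=\lambda_j}c_{z,n}\eta_n,
\]
a finite sum since each eigenvalue has finite multiplicity. Because $\sum_j|a_j|\le\sum_n|c_{z,n}\eta_n|<\infty$, the regrouping is legitimate and the hypothesis becomes $\sum_{j=1}^\infty a_j(1-e^{-\lambda_j t})=0$ for $t\in(0,\epsilon)$. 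Writing $A:=\sum_j a_j$, this says the Dirichlet series $G(t):=\sum_{j=1}^\infty a_j e^{-\lambda_j t}$ equals the constant $A$ on the interval $(0,\epsilon)$.

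The core of the argument is to show that this forces every $a_j$ to vanish. Since $\lambda_j>0$ and $|e^{-\lambda_j t}|\le 1$ whenever $\operatorname{Re} t\ge 0$, the bound $|a_j e^{-\lambda_j t}|\le|a_j|$ together with $\sum_j|a_j|<\infty$ lets the Weierstrass $M$-test show that $G$ is analytic on the open right half-plane $\{\operatorname{Re} t>0\}$ and continuous up to the imaginary axis. As $G\equiv A$ on the real segment $(0,\epsilon)$, which has an accumulation point in this domain, the identity theorem gives $G\equiv A$ on the whole half-plane. Letting $t\to+\infty$ along the reals and applying dominated convergence (each summand $a_j e^{-\lambda_j t}$ is dominated by $|a_j|$ and tends to $0$) yields $A=0$, so $G\equiv 0$. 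I would then peel off the coefficients one at a time: multiplying $\sum_j a_j e^{-\lambda_j t}=0$ by $e^{\lambda_1 t}$ and sending $t\to\infty$ isolates $a_1=0$, and an induction removing $a_1,\dots,a_{j-1}$ and multiplying by $e^{\lambda_j t}$ gives $a_j=0$ for every $j$.

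Carrying this out for a.e. $z\in\Gamma$ shows that $\sum_{\lambda_n=\lambda_j}c_{z,n}\eta_n=0$ for every $j\in\mathbb{N}^+$ and a.e. $z\in\Gamma$, which is precisely the hypothesis of Lemma \ref{lemma_uniqueness_1}; that lemma then delivers $\eta_n=0$ for all $n\in\mathbb{N}^+$. I expect the main obstacle to be the middle step, the uniqueness of the exponential series, where one must carefully extract analyticity from only the absolute-convergence assumption and combine the identity theorem with the $t\to\infty$ asymptotics; the regrouping and the final appeal to Lemma \ref{lemma_uniqueness_1} are routine by comparison. One may alternatively work directly with the analytic function $F(t)=\sum_j a_j(1-e^{-\lambda_j t})$ on $\{\operatorname{Re} t>0\}$, but splitting off the constant $A$ makes the $t\to\infty$ normalization most transparent.
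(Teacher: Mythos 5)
Your proof is correct, and its skeleton is the same as the paper's: reduce to showing that the grouped coefficients $\sum_{\lambda_n=\lambda_j}c_{z,n}\eta_n$ vanish for every $j$ and a.e.\ $z\in\Gamma$, then invoke Lemma \ref{lemma_uniqueness_1}. The only difference is that the paper disposes of the first step by citing an external result (Lemma 3.5 of Rundell--Zhang), whereas you prove it from scratch: absolute convergence justifies the regrouping and gives, via the Weierstrass $M$-test, analyticity of $G(t)=\sum_j a_je^{-\lambda_j t}$ on the right half-plane; the identity theorem extends $G\equiv A$ from $(0,\epsilon)$ to the whole half-plane; and the $t\to+\infty$ asymptotics (first to get $A=0$, then multiplying by $e^{\lambda_j t}$ inductively) peel off the coefficients one by one, using that the distinct eigenvalues are strictly increasing and positive. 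This is a standard and complete uniqueness argument for Dirichlet series, so your version is a valid self-contained replacement for the citation; the paper's route is shorter only because it outsources exactly this step.
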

\begin{proof}
From \cite[Lemma 3.5]{RundellZhang:2020} we have that $\sum_{\lambda_n=\lambda_j} c_{z,n} \eta_n=0$ for each $j\in\mathbb N^+$ and a.e. $z\in\Gamma$. Then Lemma \ref{lemma_uniqueness_1} gives the desired result and completes the proof.
\end{proof}

The next proposition concerns with the uniqueness result of equation \eqref{PDE2}.
\begin{proposition}\label{proposition_uniqueness}
  For the PDE \eqref{PDE2}, the data $\frac{\partial u}{\n}\big|_{\Gamma\times(0,\epsilon)}$ can uniquely determine $p(x)$.
 \end{proposition}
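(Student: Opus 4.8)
The plan is to combine linearity with the spectral representation of the data in Corollary \ref{data_formula} and the modal-separation argument packaged in Lemma \ref{lemma_uniqueness_2}. Since the solution map $p\mapsto u$ of \eqref{PDE2} is linear, so is the forward operator $p\mapsto \frac{\partial u}{\n}\big|_{\Gamma\times(0,\epsilon)}$; hence unique determination is equivalent to triviality of its kernel. I would therefore take a source $p$ producing vanishing boundary flux, $\frac{\partial u}{\n}\big|_{\Gamma\times(0,\epsilon)}=0$, and aim to deduce $p\equiv 0$.

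First I would invoke Corollary \ref{data_formula}, which for a.e.\ $z\in\partial\Omega$ and $t\in(0,\epsilon)$ gives
\[
-\frac{\partial u}{\n}(z,t)=\sum_{n=1}^\infty c_{z,n}p_n\,(1-e^{-\lambda_n t}),\qquad p_n:=\l p(\cdot),\varphi_n(\cdot)\ro .
\]
The assumed vanishing of the flux on $\Gamma\times(0,\epsilon)$ then translates directly into
\[
\sum_{n=1}^\infty c_{z,n}p_n\,(1-e^{-\lambda_n t})=0,\qquad t\in(0,\epsilon),\ \text{a.e.\ }z\in\Gamma,
\]
which is exactly the hypothesis of Lemma \ref{lemma_uniqueness_2} under the identification $\eta_n=p_n$.

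To apply Lemma \ref{lemma_uniqueness_2} legitimately I must first verify its standing prerequisite, namely that $\sum_{n=1}^\infty c_{z,n}p_n$ converges absolutely for a.e.\ $z\in\Gamma$; this is precisely what Assumption \ref{condition_f_regularity}(b) supplies, since each inner sum $\sum_{n}|c_{z,n}p_n|$ is dominated by the finite total there. Granting this, Lemma \ref{lemma_uniqueness_2} yields $p_n=0$ for every $n\in\mathbb N^+$. Because $\{\varphi_n\}_{n=1}^\infty$ is an orthonormal basis of $L^2(\Omega)$ and the $p_n=\l p,\varphi_n\ro$ are exactly the Fourier coefficients of $p$, completeness (Parseval) forces $\|p\|_{L^2(\Omega)}=0$, i.e.\ $p\equiv 0$. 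By linearity this establishes injectivity of the forward operator and hence the claimed unique determination.

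The individual steps are short because the substantive work is already done upstream: Corollary \ref{data_formula} provides the spectral representation of the data, while Lemma \ref{lemma_uniqueness_2}, resting on Lemma \ref{lemma_uniqueness_1} and ultimately on the non-vanishing of the normal traces $\frac{\partial\varphi_n}{\n}$ on $\Gamma$ from Lemma \ref{nonempty_open}, encapsulates the delicate separation of the exponential modes $e^{-\lambda_n t}$. The only point genuinely requiring care is confirming the absolute-convergence hypothesis of Lemma \ref{lemma_uniqueness_2}; this is the sole reason Assumption \ref{condition_f_regularity}(b) is imposed, since without it neither the termwise handling of the series nor the modal-separation argument behind the lemma would be justified.
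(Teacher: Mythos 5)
Your proposal is correct and follows essentially the same route as the paper: reduce to the kernel by linearity, apply Corollary \ref{data_formula} to get the spectral representation of the vanishing flux, and invoke Lemma \ref{lemma_uniqueness_2} together with completeness of $\{\varphi_n\}$ to conclude $p=0$. Your explicit verification of the absolute-convergence hypothesis via Assumption \ref{condition_f_regularity}(b) is a small extra care point the paper leaves implicit, but the substance is identical.
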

 \begin{proof}
  Given $p(x)$ and $\tilde p(x)$, we set $u$ and $\tilde u$ be the corresponding solutions of equation \eqref{PDE2} with $p$ and $\tilde p$ respectively. If $\frac{\partial u}{\n}\big|_{\Gamma\times(0,\epsilon)}=\frac{\partial \tilde u}{\n}\big|_{\Gamma\times(0,\epsilon)},$ we need to show that $\|p-\tilde p\|_{L^2(\Omega)}=0$.

  From Corollary \ref{data_formula}, we have
  \begin{equation*}
 \sum_{n=1}^\infty c_{z,n}(p_n-\tilde p_n) (1-e^{-\lambda_nt})=0,\quad t\in(0,\epsilon).
  \end{equation*}
Then by Lemma \ref{lemma_uniqueness_2}, we conclude that $p_n=\tilde p_n$ for $n\in \mathbb N^+$. This leads to the desired result and completes the proof.
 \end{proof}

 Now we can state the uniqueness theorem of the inverse problem \eqref{IP}.
 \begin{theorem}\label{theorem-uniq}
  For equation \eqref{emission-um}, the data \eqref{data} can uniquely determine the source \eqref{ua-time}.

  More precisely, given two sets of unknown $\{p_k(x)\}_{k=1}^K$ and $\{\tilde p_k(x)\}_{k=1}^K$, we denote the corresponding solutions of equation \eqref{emission-um} by $u_m$ and $\tilde u_m$ respectively. Providing
  $$\frac{\partial u_m}{\n}\big|_{\Gamma\times (0,T)}=\frac{\partial \tilde u_m}{\n}\big|_{\Gamma\times (0,T)},$$
  it holds that
  $$p_k=\tilde p_k\ \text{in}\ L^2(\Omega)\ \text{for}\ k=1,\cdots,K.$$
 \end{theorem}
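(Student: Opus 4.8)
The plan is to reduce the theorem to Proposition \ref{proposition_uniqueness}, which already resolves uniqueness when the source is constant in time over a single interval starting from homogeneous initial data, by inducting on the mesh intervals $[t_{k-1},t_k)$. Given two coefficient families $\{p_k\}_{k=1}^K$ and $\{\tilde p_k\}_{k=1}^K$ producing solutions $u_m,\tilde u_m$ of \eqref{emission-um} with the same flux on $\Gamma\times(0,T)$, I would prove $p_k=\tilde p_k$ in $L^2(\Omega)$ successively for $k=1,\dots,K$. The key observation is that on each interval $[t_{k-1},t_k)$ the source $S[\mu_f,u_e]$ from \eqref{ua-time} is exactly the \emph{time-independent} function $p_k(x)$, so that on a single interval the emission equation has precisely the structure of \eqref{PDE2}, up to a translation in time and up to a nonzero initial value inherited from the previous interval.

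For the base case $k=1$, note that on $\Omega\times(0,t_1)$ the source equals $p_1(x)$ with the homogeneous initial condition $u_m(\cdot,0)=0$, and likewise for $\tilde u_m$ with $\tilde p_1$. Hence the difference $w=u_m-\tilde u_m$ solves $(c^{-1}\partial_t+\A)w=p_1-\tilde p_1$ on $\Omega\times(0,t_1)$ with $w=0$ on $\partial\Omega\times(0,t_1)\cup\Omega\times\{0\}$ and $\tfrac{\partial w}{\n}\big|_{\Gamma\times(0,t_1)}=0$; this is exactly \eqref{PDE2} with $p=p_1-\tilde p_1$ and $\epsilon=t_1$, so Proposition \ref{proposition_uniqueness} gives $p_1=\tilde p_1$. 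For the inductive step, assume $p_j=\tilde p_j$ for $j=1,\dots,k-1$. Then on $\Omega\times(0,t_{k-1})$ the two emission problems have identical sources, identical zero initial data and identical boundary conditions, so forward well-posedness of the parabolic initial-boundary value problem yields $u_m=\tilde u_m$ there; in particular $u_m(\cdot,t_{k-1})=\tilde u_m(\cdot,t_{k-1})$. Consequently, on $(t_{k-1},t_k)$ the difference $w=u_m-\tilde u_m$ satisfies $(c^{-1}\partial_t+\A)w=p_k-\tilde p_k$ with the \emph{vanishing} initial value $w(\cdot,t_{k-1})=0$, the homogeneous boundary condition $\mathcal B w=0$, and $\tfrac{\partial w}{\n}\big|_{\Gamma\times(t_{k-1},t_k)}=0$. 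Translating time via $s=t-t_{k-1}$ turns this into \eqref{PDE2} with $p=p_k-\tilde p_k$ and $\epsilon=t_k-t_{k-1}$, and Proposition \ref{proposition_uniqueness} again forces $p_k=\tilde p_k$, closing the induction.

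The main obstacle is precisely the nonzero interior initial values $u_m(\cdot,t_{k-1})$ at the mesh nodes: the representation in Corollary \ref{data_formula} and hence Proposition \ref{proposition_uniqueness} are built on homogeneous initial data, so they cannot be applied interval-by-interval to $u_m$ itself. The inductive device above circumvents this by comparing two solutions whose earlier coefficients already coincide, so that their difference carries zero initial data into the current interval and the representation formula applies. Two minor points I would check explicitly: that the forward uniqueness used to propagate $u_m=\tilde u_m$ up to $t_{k-1}$ holds for \eqref{emission-um} (standard, since the source and data agree on that subinterval), and that Proposition \ref{proposition_uniqueness} is valid for the possibly non-small interval length $\epsilon=t_k-t_{k-1}$ — this is the case because Corollary \ref{data_formula} and Lemma \ref{lemma_uniqueness_2} are formulated for the whole interval $(0,\epsilon)$ with no genuine smallness requirement, the smallness in \eqref{PDE2} being only a convenient normalization.
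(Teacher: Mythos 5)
Your proposal is correct and follows essentially the same route as the paper: induct over the mesh intervals and reduce each step to Proposition \ref{proposition_uniqueness}, using that the flux data coincide on the current subinterval. The only (harmless) difference is bookkeeping at the nonzero initial value $u_m(\cdot,t_{k-1})$ --- you pass to the difference $u_m-\tilde u_m$, which carries zero initial data by the induction hypothesis and forward uniqueness, whereas the paper subtracts the common state $u(\cdot,t_{k-1})$ from both solutions, absorbing $-\A u(\cdot,t_{k-1})$ into the source; both devices land in the setting of \eqref{PDE2}.
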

\begin{proof}
 From equation $\eqref{emission-um}$, if we restrict $t^*\in (0,t_1)$, then we can give the models of $u(x,t^*)$ and $\tilde u(x,t^*)$ as follows:
 \begin{equation*}
 \begin{cases}
  \begin{aligned}
  (c^{-1}\partial_t+\A)u(x,t^*)&=p_1(x),&&  (x,t^*)\in \Omega\times(0,t_1),\\
   u(x,t^*)&=0, &&(x,t^*)\in \partial\Omega\times(0,t_1)\cup \Omega\times\{0\},
  \end{aligned}
 \end{cases}
 \end{equation*}
 and
  \begin{equation*}
 \begin{cases}
  \begin{aligned}
  (c^{-1}\partial_t+\A)\tilde u(x,t^*)&=\tilde p_1(x),&&  (x,t^*)\in \Omega\times(0,t_1),\\
   \tilde u(x,t^*)&=0, &&(x,t^*)\in \partial\Omega\times(0,t_1)\cup \Omega\times\{0\}.
  \end{aligned}
 \end{cases}
 \end{equation*}
 From Proposition \ref{proposition_uniqueness}, we have that $p_1=\tilde p_1$ in the sense of $L^2(\Omega)$, which also gives that $u(x,t_1)=\tilde u(x,t_1)$.

 After that, by setting $t^*=t-t_1$ and $t^*\in(0,t_2-t_1)$, we have
 \begin{equation*}
 \begin{cases}
  \begin{aligned}
  (c^{-1}\partial_t+\A)u(x,t^*)&=p_2(x),&&  (x,t^*)\in \Omega\times(0,t_2-t_1),\\
   u(x,0)&=u(x,t_1), &&x\in \Omega,\\
   u(x,t^*)&=0, &&(x,t^*)\in \partial\Omega\times(0,t_2-t_1),
  \end{aligned}
 \end{cases}
 \end{equation*}
 and
  \begin{equation*}
 \begin{cases}
  \begin{aligned}
  (c^{-1}\partial_t+\A)\tilde u(x,t^*)&=\tilde p_1(x),&&  (x,t^*)\in \Omega\times(0,t_2-t_1),\\
   \tilde u(x,0)&=u(x,t_1), &&x\in\Omega,\\
   u(x,t^*)&=0, &&(x,t^*)\in \partial\Omega\times(0,t_2-t_1).
  \end{aligned}
 \end{cases}
 \end{equation*}
 Next, if we set $w=u-u(x,t_1)$ and $\tilde w=\tilde u-u(x,t_1)$, we have that
  \begin{equation*}
 \begin{cases}
  \begin{aligned}
  (c^{-1}\partial_t+\A)w(x,t^*)&=p_2(x)-\A u(x,t_1),&&  (x,t^*)\in \Omega\times(0,t_2-t_1),\\
  w(x,t^*)&=0, &&(x,t^*)\in \partial\Omega\times(0,t_2-t_1)\cup \Omega\times\{0\},
  \end{aligned}
 \end{cases}
 \end{equation*}
 and
  \begin{equation*}
 \begin{cases}
  \begin{aligned}
  (c^{-1}\partial_t+\A)\tilde w(x,t^*)&=\tilde p_2(x)-\A u(x,t_1),&&(x,t^*)\in \Omega\times(0,t_2-t_1),\\\tilde w(x,t^*)&=0, &&(x,t^*)\in \partial\Omega\times(0,t_2-t_1)\cup \Omega\times\{0\}.
  \end{aligned}
 \end{cases}
 \end{equation*}
 From Proposition \ref{proposition_uniqueness},  we have that $p_2=\tilde p_2$ and $u(x,t_2)=\tilde u(x,t_2)$.

 Continuing this argument, we can achieve the desired result and complete the proof.
\end{proof}

\subsection{Conditional stability of Lipschitz type}
We next consider the Lipschitz stability for our inverse problem of recovering $\{p_k(x)\}_{k=1}^K$ in \eqref{ua-time} from the boundary data \eqref{data}. Similar as done in \cite{Sun2020AMAS, Sun2020IP}, the crucial point is to construct an integral identity connecting the inversion input data with the unknown functions.

We introduce the adjoint problem of \eqref{emission-um} as
\begin{equation}\label{adjoint-phi}
\begin{cases}
\begin{aligned}
 (-c^{-1}\partial_t +\A)\phi(x,t)&=0, \qquad\qquad x\in\Omega, \ t\in(0,T),\\
 \phi(x,T)&= 0,  \qquad\qquad x\in \Omega,\\
{\mathcal B} {\phi} &=
\begin{cases}
\omega(x,t),  & x\in\Gamma, \ t\in (0,T),\\
0, & x\in\partial\Omega\setminus\Gamma, \ t\in (0,T).
\end{cases}\\
\end{aligned}
\end{cases}
\end{equation}
Then the following lemma gives a variational identity.

\begin{lemma}\label{thm3-1}
Let $\Gamma$ be a nonempty open subset of boundary $\partial\Omega$. For the given time mesh $\{t_k\}_{k=0}^K$ with $t_K=T$, we denote $u_m$ and $\tilde u_m$ as the solution of equation \eqref{emission-um} with
$$S[\mu_f,u_e]=\sum_{k=1}^K p_k(x)\chi_{{}_{t\in [t_{k-1},t_k)}}\ \text{and}\ \tilde S[\tilde \mu_f,u_e]=\sum_{k=1}^K \tilde p_k(x)\chi_{{}_{t\in [t_{k-1},t_k)}}$$
respectively. Then there holds that
\begin{equation}\label{iden-obv}
\begin{aligned}
\sum_{k=1}^K \int_{t_{k-1}}^{t_k}\int_{\Omega} (p_k-\tilde p_k)(x)\ \phi[\omega](x,t) \ dx\ dt = \int_{0}^T\int_{\Gamma} \omega(x,t) \big(\frac{\partial u_m}{\n} - \frac{\partial \tilde u_m}{\n} \big)\ dx\ dt,
\end{aligned}
\end{equation}
where $\phi[\omega]$ is the solution to \eqref{adjoint-phi} for $\omega\in L^2(\Gamma\times(0,T))$.
\end{lemma}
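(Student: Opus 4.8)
The plan is to derive \eqref{iden-obv} by testing the equation for the difference of the two emission fields against the adjoint solution $\phi[\omega]$ and then integrating by parts in time and space. First I would set $v:=u_m-\tilde u_m$. Subtracting the two copies of \eqref{emission-um} shows that $v$ solves
\[
(c^{-1}\partial_t+\A)v=\sum_{k=1}^K (p_k-\tilde p_k)(x)\,\chi_{t\in[t_{k-1},t_k)},\qquad (x,t)\in\Omega\times(0,T),
\]
together with the homogeneous initial condition $v(\cdot,0)=0$ and the homogeneous Robin condition $\mathcal B v=0$ on $\partial\Omega\times(0,T)$. Pairing this identity with $\phi[\omega]$ over $\Omega\times(0,T)$ and noting that $\chi_{t\in[t_{k-1},t_k)}$ splits the time integral into the subintervals $(t_{k-1},t_k)$, the source side of the pairing is exactly the left-hand side of \eqref{iden-obv}. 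It therefore remains to show that $\int_0^T\int_\Omega\big[(c^{-1}\partial_t+\A)v\big]\,\phi\,dx\,dt$ equals the boundary functional on the right of \eqref{iden-obv}.

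For this I would integrate by parts. In time, $\int_0^T\int_\Omega c^{-1}\partial_t v\,\phi\,dx\,dt=-\int_0^T\int_\Omega c^{-1}v\,\partial_t\phi\,dx\,dt$, since the temporal contributions at $t=0$ and $t=T$ vanish by $v(\cdot,0)=0$ and the terminal condition $\phi(\cdot,T)=0$ in \eqref{adjoint-phi}; this turns $c^{-1}\partial_t$ into $-c^{-1}\partial_t$ acting on $\phi$. In space I would apply the Green identity for $\A$ already recorded in the proof of the density lemma, namely $\l\A v,\phi\ro-\l v,\A\phi\ro=\l v,\frac{\partial\phi}{\n}\rp-\l\frac{\partial v}{\n},\phi\rp$, to move $\A$ off $v$ and onto $\phi$. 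Combining the two, the interior integrand becomes $v\,(-c^{-1}\partial_t+\A)\phi$, which vanishes identically because $\phi=\phi[\omega]$ solves the adjoint equation \eqref{adjoint-phi}. Hence the entire pairing collapses to the boundary integral $\int_0^T\big(\l v,\frac{\partial\phi}{\n}\rp-\l\frac{\partial v}{\n},\phi\rp\big)\,dt$.

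The final step, where the structure of the problem genuinely enters, is to evaluate this boundary integral using the Robin conditions. On $\partial\Omega\setminus\Gamma$ both $\mathcal B v=0$ and $\mathcal B\phi=0$ hold, so $\frac{\partial v}{\n}=-\beta v$ and $\frac{\partial\phi}{\n}=-\beta\phi$ there and the two boundary terms cancel exactly. On $\Gamma$ one instead has $\mathcal B\phi=\omega$, so that after inserting $\frac{\partial v}{\n}=-\beta v$ the $\beta$-terms cancel again and only the term carrying $\omega$ survives; invoking $\mathcal B v=0$ on $\Gamma$ once more to rewrite the surviving trace of $v$ through its normal derivative $\frac{\partial u_m}{\n}-\frac{\partial\tilde u_m}{\n}$ then yields precisely the right-hand side of \eqref{iden-obv}. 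I expect the main obstacle to be the rigorous justification of these integrations by parts at the level of $H^1$ weak solutions—in particular securing enough temporal regularity of $v$ despite the piecewise-constant-in-time source, which may require splitting the time integral over $(t_{k-1},t_k)$ and using continuity of $v$ across the nodes $t_k$—together with the careful accounting of the $\kappa$- and $\beta$-weights inside the pairing $\l\cdot,\cdot\rp$ so that the off-$\Gamma$ contributions cancel and the $\Gamma$-contribution matches the stated data functional; the interior cancellation through the adjoint equation is then immediate.
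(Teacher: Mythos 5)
Your proposal is correct and follows essentially the same route as the paper: form the difference $U_m=u_m-\tilde u_m$, pair it with the adjoint solution $\phi[\omega]$ over $\Omega\times(0,T)$, integrate by parts in time (using $U_m(\cdot,0)=0$ and $\phi(\cdot,T)=0$) and in space via Green's identity, cancel the interior terms with the adjoint equation, and reduce the surviving boundary integral using the two Robin conditions. Your explicit case split between $\Gamma$ and $\partial\Omega\setminus\Gamma$ is just an unpacking of the paper's single step rewriting the boundary term as $\int_0^T\int_{\partial\Omega}(\mathcal B\phi)\,\frac{\partial U_m}{\n}\,dx\,dt$, and the residual $\kappa$- and $\beta$-weight bookkeeping you flag is treated no more carefully in the paper's own proof.
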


\begin{proof}
We see that $U_m:=u_m-\tilde u_m$ satisfies
\begin{equation}\label{emission-UM}
\begin{cases}
\begin{aligned}
\left(c^{-1}\partial_t +\A \right) U_m(x,t)&=\sum_{k=1}^K (p_k-\tilde p_k)(x)\chi_{{}_{t\in [t_{k-1},t_k)}}, && (x,t)\in\Omega\times (0,T),\\
U_m(x,0)&=0, && x\in\Omega,\\
\mathcal B U_m&=0, && (x,t)\in\partial\Omega\times(0,T).
\end{aligned}
\end{cases}
\end{equation}

Multiplying $\phi(x,t):=\phi[\omega](x,t)$ on the two sides of equation \eqref{emission-UM} and integrating them in $\Omega\times(0,T)$, we get
\begin{eqnarray}\label{sun3-5}
\int_{0}^T\int_{\Omega} (c^{-1}\partial_t +\mathcal{A} )U_m(x,t)\phi(x,t) \ dx\ dt=
\sum_{k=1}^K \int_{t_{k-1}}^{t_k}\int_{\Omega} (p_k-\tilde p_k)(x) \phi[\omega](x,t) \ dx\ dt.
\end{eqnarray}

By $\phi(x,T)=0$, $U_m(x,0)=0$ and $\{\phi, U_m\}\subset L^2 (0,T;H^2(\Omega))$, we have
\begin{equation}
\begin{aligned}
\int_{0}^T\int_{\Omega}\ (c^{-1}\partial_t U_m) \  \phi\  dx\ dt &={\int_{\Omega} (U_m \phi)|_{t=0}^{t=T}\ dx}-\int_{0}^T\int_{\Omega} U_m (c^{-1}\partial_t \phi) \ dx\ dt \\
&= -\int_{0}^T\int_{\Omega} U_m (c^{-1}\partial_t \phi) \ dx\ dt.
\end{aligned}
\end{equation}
The Green formula also gives that
\begin{equation}\label{sun3-7}
\begin{aligned}
\int_{0}^T\int_{\Omega} (\A U_m)\ \phi\ dx\ dt
&=\int_{0}^T\int_{\Omega} (\mathcal{A}{\phi})\ U_m \ dx\ dt \\
&\quad + \int_{0}^T\int_{\partial\Omega} \big(\phi\frac{\partial U_m}{\n} -U_m\frac{\partial \phi}{\n}\big) \ dx\ dt.
\end{aligned}
\end{equation}
Combining \eqref{sun3-5}-\eqref{sun3-7} together and using \eqref{adjoint-phi}, we finally have
\begin{equation*}\label{sun3-8}
\begin{aligned}
\sum_{k=1}^K \int_{t_{k-1}}^{t_k}\int_{\Omega} (p_k-\tilde p_k)(x) \phi[\omega](x,t) \ dx\ dt
&= \int_{0}^T \int_{\partial\Omega} \big(\phi\frac{\partial U_m}{\n} -U_m\frac{\partial \phi}{\n}\big) \ dx\ dt \\
&= \int_{0}^T \int_{\partial\Omega} (\mathcal{B}\phi) \frac{\partial U_m}{\n} \ dx\ dt.
\end{aligned}
\end{equation*}
Henceforth, utilizing the boundary condition
\begin{equation*}
\mathcal{B}\phi =
\begin{cases}
\omega(x,t),   &x\in\Gamma, \ t\in (0,T),\\
0,  &x\in\partial\Omega\setminus\Gamma, \ t\in (0,T),
\end{cases}\\
\end{equation*}
we deduce the desired result and complete the proof.
\end{proof}

For the set of functions $\{p_k(x)\}_{k=1}^K$, we use the related vector form as
$\vec p:=(p_1,\cdots,p_K)$ and define the operator $L:(L^2(\Omega))^K\to L^2(\Omega\times(0,T))$ as
$$L\vec p=\sum_{k=1}^K p_k(x)\chi_{{}_{t\in[t_{k-1},t_k)}}.$$

Next we give a bilinear functional with respect to $L\vec p$ and $\omega(x,t)$ by
\begin{equation}\label{B-func}
\mathcal{L} (L \vec p,\omega)= \int_{\Omega\times(0,T)} (L\vec p)(x,t)\ \phi[\omega](x,t) \ dx\ dt,
\end{equation}
where $\phi[\omega](x,t)$ satisfies \eqref{adjoint-phi}. The corresponding norm $\|\cdot\|_{\mathcal L}$ is introduced in the following lemma.

\begin{lemma}
With \eqref{B-func}, we define the function $\|\cdot\|_\mathcal L:  l^1((L^2(\Omega))^K)\to \mathbb R$ as
\begin{eqnarray}\label{sun4-2}
\|\vec p\|_\mathcal{L}:=\mathop {\sup}\limits_{\omega\in {W} }\frac{|\mathcal{L} (L \vec p,\omega)|}{\norm{\omega}_{L^2(\Gamma\times(0,T))}},
\end{eqnarray}
where
$$W:=\{\psi:  \psi\in L^2(\Gamma\times(0,T)),\ \psi\not\equiv 0\}.$$
Then $\|\cdot\|_\mathcal L$ is a norm on $l^1((L^2(\Omega))^K)$.
\end{lemma}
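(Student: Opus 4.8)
The plan is to check the three axioms defining a norm, together with finiteness, so that $\|\cdot\|_{\mathcal L}$ genuinely maps $l^1((L^2(\Omega))^K)$ into $\mathbb R$. Nonnegativity is immediate, since the quantity under the supremum in \eqref{sun4-2} is a quotient of a nonnegative numerator and a strictly positive denominator (recall $\omega\not\equiv0$ on $W$). For finiteness, Cauchy--Schwarz gives $|\mathcal{L}(L\vec p,\omega)|\le \norm{L\vec p}_{L^2(\Omega\times(0,T))}\norm{\phi[\omega]}_{L^2(\Omega\times(0,T))}$, and the well-posedness of the adjoint problem \eqref{adjoint-phi} provides a bound $\norm{\phi[\omega]}_{L^2(\Omega\times(0,T))}\le C\norm{\omega}_{L^2(\Gamma\times(0,T))}$; dividing by $\norm{\omega}_{L^2(\Gamma\times(0,T))}$ and taking the supremum yields $\|\vec p\|_{\mathcal L}\le C\norm{L\vec p}_{L^2(\Omega\times(0,T))}<\infty$, the last quantity being controlled by $\big(\sum_{k=1}^K(t_k-t_{k-1})\norm{p_k}_{L^2(\Omega)}^2\big)^{1/2}$.

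Absolute homogeneity and the triangle inequality are formal consequences of the linearity of $L$ and of $\mathcal{L}(\cdot,\omega)$ in its first slot. For a scalar $\alpha$ we have $\mathcal{L}(L(\alpha\vec p),\omega)=\alpha\,\mathcal{L}(L\vec p,\omega)$, hence $|\mathcal{L}(L(\alpha\vec p),\omega)|=|\alpha|\,|\mathcal{L}(L\vec p,\omega)|$ and the constant $|\alpha|$ factors out of the supremum, giving $\|\alpha\vec p\|_{\mathcal L}=|\alpha|\,\|\vec p\|_{\mathcal L}$. Likewise $\mathcal{L}(L(\vec p+\vec q),\omega)=\mathcal{L}(L\vec p,\omega)+\mathcal{L}(L\vec q,\omega)$, so dividing by $\norm{\omega}_{L^2(\Gamma\times(0,T))}$, applying the triangle inequality for $|\cdot|$, and taking the supremum over $\omega\in W$ (subadditivity of $\sup$) gives $\|\vec p+\vec q\|_{\mathcal L}\le\|\vec p\|_{\mathcal L}+\|\vec q\|_{\mathcal L}$.

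The main obstacle, and the only place where the structure of the inverse problem is used, is positive-definiteness: I must show that $\|\vec p\|_{\mathcal L}=0$ forces $\vec p=0$. Here I would invoke the variational identity of Lemma \ref{thm3-1} with the choice $\tilde{\vec p}=0$, so that $\tilde u_m\equiv0$ (the unique solution of \eqref{emission-um} with vanishing source, initial, and boundary data) and the identity \eqref{iden-obv} reduces to
\begin{equation*}
\mathcal{L}(L\vec p,\omega)=\int_0^T\int_\Gamma \omega(x,t)\,\frac{\partial u_m}{\n}\,dx\,dt \qquad \text{for all } \omega\in L^2(\Gamma\times(0,T)),
\end{equation*}
where $u_m$ solves \eqref{emission-um} with source $L\vec p$. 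If $\|\vec p\|_{\mathcal L}=0$, then $\mathcal{L}(L\vec p,\omega)=0$ for every $\omega\in W$, hence the right-hand side vanishes for all $\omega\in L^2(\Gamma\times(0,T))$; since $\frac{\partial u_m}{\n}\big|_{\Gamma\times(0,T)}$ itself belongs to $L^2(\Gamma\times(0,T))$ by the trace theorem, it is an admissible test function, which forces $\frac{\partial u_m}{\n}=0$ a.e. on $\Gamma\times(0,T)$. Thus $u_m$ and the zero solution share the same boundary flux on $\Gamma\times(0,T)$, and the finite-time uniqueness Theorem \ref{theorem-uniq} yields $p_k=0$ in $L^2(\Omega)$ for every $k=1,\dots,K$, i.e.\ $\vec p=0$. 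I expect this definiteness step to be the crux: it is precisely the statement that the seemingly weaker ``dual'' quantity \eqref{sun4-2} still separates points, a fact that rests entirely on the uniqueness established earlier.
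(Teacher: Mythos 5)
Your proposal is correct and follows essentially the same route as the paper: finiteness via H\"older/Cauchy--Schwarz together with the a priori bound $\norm{\phi[\omega]}\le C\norm{\omega}$ for the adjoint problem, homogeneity and the triangle inequality from linearity of $\mathcal{L}(L\cdot,\omega)$ and subadditivity of the supremum, and definiteness by reducing the identity \eqref{iden-obv} (with $\tilde{\vec p}=0$) to the vanishing of $\frac{\partial u_m}{\n}$ on $\Gamma\times(0,T)$ and then appealing to Theorem \ref{theorem-uniq}. The only cosmetic difference is that you bound $|\mathcal{L}(L\vec p,\omega)|$ by $\norm{L\vec p}_{L^2(\Omega\times(0,T))}$ while the paper bounds it directly by $\sum_k\norm{p_k}_{L^2(\Omega)}$; since $K$ is finite these are equivalent and both yield finiteness on $l^1((L^2(\Omega))^K)$.
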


\begin{proof}
Firstly, let us prove the well-definedness of $\|\vec p\|_{\mathcal L}$. With Holder inequality, we have
\begin{equation*}
\begin{aligned}
|\mathcal{L} (L \vec p,\omega)| &\le \sum_{k=1}^K \int_{t_{k-1}}^{t_k} \int_\Omega \big|p_k(x)\ \phi[\omega](x,t)\big|\ dx\ dt\\
 &\le \sum_{k=1}^K \norm{p_k}_{L^2(\Omega)}\norm{\phi[\omega]}_{L^2(\Omega\times(t_{k-1},t_k))}
\le C\norm{\omega}_{L^2(0,T;\Gamma)} \sum_{k=1}^K \norm{p_k}_{L^2(\Omega)}\\
&\le C \sum_{k=1}^K \norm{\omega}_{L^2(t_{k-1},t_k;\Gamma)} \norm{p_k}_{L^2(\Omega)}
\le C  \norm{\omega}_{L^2(0,T;\Gamma)} \sum_{k=1}^K\norm{p_k}_{L^2(\Omega)},
\end{aligned}
\end{equation*}
which gives
$$\frac{|\mathcal{L} (L \vec p,\omega)|}{\norm{\omega}_{L^2(\Gamma\times(0,T))}} \le C\sum_{k=1}^K \norm{p_k}_{L^2(\Omega)}=C\|\vec p\|_{l^1((L^2(\Omega))^K)},\ \omega\in W.$$
So we have
$\|\vec p\|_{\mathcal L}\le C\|\vec p\|_{l^1((L^2(\Omega))^K)}<\infty$.

Next we prove that $\|\cdot\|_{\mathcal L}$ is a norm on $ l^1((L^2(\Omega))^K)$. Firstly, it is obviously that
$$\|\vec p\|_{\mathcal L}\ge 0\ \text{and}\ \|c\vec p\|_{\mathcal L}=|c|\|\vec p\|_{\mathcal L}\ \text{for}\ c\in \mathbb R\ \text{and}\ \vec p\in l^1((L^2(\Omega))^K).$$
Secondly, from the definition of $\|\cdot\|_{\mathcal L}$, we can prove that
\begin{align*}
 \|\vec{p_1}+\vec{p_2}\|_{\mathcal L}&=\mathop {\sup}\limits_{\omega\in {W} }\frac{|\mathcal{L} (L \vec p_1+L\vec p_2,\omega)|}{\norm{\omega}_{L^2(\Gamma\times(0,T))}}
\le \mathop {\sup}\limits_{\omega\in {W} }\frac{|\mathcal{L} (L \vec p_1,\omega)|+|\mathcal{L} (L\vec p_2,\omega)|}{\norm{\omega}_{L^2(\Gamma\times(0,T))}}\\
&\le \mathop {\sup}\limits_{\omega\in {W} }\frac{|\mathcal{L} (L \vec p_1,\omega)|}{\norm{\omega}_{L^2(\Gamma\times(0,T))}}+\mathop {\sup}\limits_{\omega\in {W} }\frac{|\mathcal{L} (L\vec p_2,\omega)|}{\norm{\omega}_{L^2(\Gamma\times(0,T))}}=\|\vec p_1\|_{\mathcal L}+\|\vec p_2\|_{\mathcal L},
\end{align*}
which is the triangle inequality.
At last, we need to show that $\|\vec p\|_\mathcal{L}=0$ leads to $\vec p=0$.
Given $\|\vec p\|_\mathcal{L}=0$, we have that $|\mathcal{L} (L \vec p,\omega)|=0$ for each $\omega\in W$. This together with Lemma \ref{thm3-1} yields that
\begin{equation*}\label{sun4-4}
 \int_{0}^T\int_{\Gamma} \omega(x,t) \frac{\partial u_m}{\n} \ dx\ dt=0, \quad \forall \omega\in L^2(\Gamma\times(0,T)),
\end{equation*}
which leads to $\frac{\partial u_m}{\n}=0$ in $L^2(\Gamma\times(0,T))$. Then by Theorem \ref{theorem-uniq}, we deduce that $p_k=0$ in $L^2(\Omega)$ for $k=1,\cdots,K$, i.e., $\vec p=0$.
Now we have proved that $\|\cdot\|_{\mathcal L}$ is a norm on $l^1((L^2(\Omega))^K)$ and the proof is complete.
\end{proof}

Now we can establish the conditional stability of Lipschitz type for the inverse problem by the weighted norm $\norm{\cdot}_\mathcal{L}$.

\begin{theorem}\label{stability-inv}
We set $\Gamma$ to be a nonempty open subset of boundary $\partial\Omega$, and denote the solutions of \eqref{emission-um} with $\vec p=(p_1,\cdots,p_K)$ and $\vec{\tilde p}=(\tilde p_1,\cdots,\tilde p_K)$ by $u_m$ and $\tilde u_m$ respectively. Then the next stability result holds
\begin{eqnarray*}\label{stability}
\|\vec p-\vec {\tilde p}\|_{\mathcal{L}} \le \Big\|\frac{\partial u_m}{\n}-\frac{\partial {\tilde u}_m}{\n}\Big\|_{L^2(\Gamma\times(0,T))}.
\end{eqnarray*}
\end{theorem}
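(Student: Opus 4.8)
The plan is to read the estimate off directly from the variational identity of Lemma \ref{thm3-1}, the definition \eqref{sun4-2} of the weighted norm, and a single Cauchy--Schwarz inequality. The starting point is the observation that the left-hand side of \eqref{iden-obv} coincides with the bilinear functional \eqref{B-func} evaluated at the difference $\vec p-\vec{\tilde p}$: since
\[
L(\vec p-\vec{\tilde p})=\sum_{k=1}^K (p_k-\tilde p_k)(x)\,\chi_{{}_{t\in[t_{k-1},t_k)}},
\]
one has $\mathcal{L}\big(L(\vec p-\vec{\tilde p}),\omega\big)=\sum_{k=1}^K\int_{t_{k-1}}^{t_k}\int_\Omega (p_k-\tilde p_k)(x)\,\phi[\omega](x,t)\,dx\,dt$ for every $\omega\in W$.

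First I would fix an arbitrary $\omega\in W$ and apply Lemma \ref{thm3-1} to rewrite this quantity as the boundary integral
\[
\mathcal{L}\big(L(\vec p-\vec{\tilde p}),\omega\big)=\int_0^T\!\!\int_\Gamma \omega(x,t)\Big(\frac{\partial u_m}{\n}-\frac{\partial \tilde u_m}{\n}\Big)\,dx\,dt.
\]
Next I would bound the right-hand side by the Cauchy--Schwarz inequality on $L^2(\Gamma\times(0,T))$, which yields
\[
\big|\mathcal{L}\big(L(\vec p-\vec{\tilde p}),\omega\big)\big|\le \norm{\omega}_{L^2(\Gamma\times(0,T))}\,\Big\|\frac{\partial u_m}{\n}-\frac{\partial \tilde u_m}{\n}\Big\|_{L^2(\Gamma\times(0,T))}.
\]
Since every $\omega\in W$ satisfies $\omega\not\equiv 0$, I may divide by $\norm{\omega}_{L^2(\Gamma\times(0,T))}$ and then take the supremum over $\omega\in W$; by the definition \eqref{sun4-2} of $\|\cdot\|_{\mathcal L}$ this is precisely the asserted inequality.

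The argument is in essence a duality computation, so I do not anticipate a serious obstacle; the only structural ingredient is Lemma \ref{thm3-1}, whose proof already carried the analytic weight. The point worth noting is that the supremum defining $\|\cdot\|_{\mathcal L}$ passes through the identity with no loss, because Cauchy--Schwarz is the sole inequality invoked---this is exactly why the stability constant equals $1$ and the estimate is of clean Lipschitz type. I would also record that $\vec p-\vec{\tilde p}\in l^1((L^2(\Omega))^K)$, as each $p_k,\tilde p_k\in L^2(\Omega)$, so that $\|\vec p-\vec{\tilde p}\|_{\mathcal L}$ is well-defined by the preceding lemma and no admissibility issue arises.
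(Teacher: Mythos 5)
Your proposal is correct and follows essentially the same route as the paper: both identify $\mathcal{L}(L(\vec p-\vec{\tilde p}),\omega)$ with the boundary integral via the identity \eqref{iden-obv}, apply Cauchy--Schwarz, and take the supremum over $\omega\in W$ per the definition \eqref{sun4-2}. No gaps.
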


\begin{proof}
By \eqref{iden-obv} and the definition (\ref{sun4-2}), we have
\begin{equation*}
\begin{aligned}
\|\vec p-\vec {\tilde p}\|_{\mathcal{L}}&=\mathop {\sup}\limits_{\omega\in {W} }\frac{|\mathcal L(L(\vec p-\vec{\tilde p}),\omega)|}{\norm{\omega}_{L^2(\Gamma\times(0,T))}}\\
&=\mathop {\sup}\limits_{\omega\in {W} }\|\omega\|^{-1}_{L^2(\Gamma\times(0,T))}\Big| \int_{0}^T\int_{\Gamma} \omega(x,t) \big(\frac{\partial u_m}{\n} - \frac{\partial \tilde u_m}{\n} \big)\ dx\ dt\Big|.
\end{aligned}
\end{equation*}
With Cauchy-Schwartz inequality, we immediately deduce that
\begin{equation*}\label{sun4-9}
\|\vec p-\vec {\tilde p}\|_{\mathcal{L}} \leq  \Big\|\frac{\partial u_m}{\n}-\frac{\partial {\tilde u}_m}{\n}\Big\|_{L^2(\Gamma\times(0,T))}.
\end{equation*}
The proof is complete.
\end{proof}


\section{Generalization error estimates.}\label{sec_pinn}
In this section, we introduce the proposed reconstruction scheme with its associated error estimates. This lays the foundations of our approach that we subsequently test numerically. We decompose this inverse problem \eqref{IP} into solving a forward problem to get $u_e$ and an inverse problem for recovering $(\mu_f, u_m)$. More precisely, we solve the forward problem \eqref{excitation-Ue} with the known boundary input to obtain $u_e(x,t)$, and then reconstruct $\mu_f$ and $u_m$ from the observation data \eqref{data} with the obtained solution $u_e$.

\subsection{Loss functions.}\label{Sec2-1}

We assume that the activation function is of $C^2$ regularity. This includes activation such as $\sigma=\tanh$.
Then, for the neural network $u_{e,\theta_e}$, $u_{m,\theta_m}$ defined by \eqref{DeepNN} in terms of $\sigma$, we set $u_{e,\theta_e}, u_{m,\theta_m} \in C^{l}(\overline\Omega\times [0, T])$ for
$l=0,1,2$. For the network parameters
$\theta_e, \theta_m\in\Theta:=\{\{(W_{k}, b_{k})\}_{k=1}^{K} :W_{k} \in \mathbb{R}^{d_{k} \times d_{k-1}}, b_{k} \in \mathbb{R}^{d_{k}}\}$,
the set of all possible trainable parameters, $u_{e,\theta_e}(x,t)$, $u_{m,\theta_m}(x,t)$ up to second order derivatives are bounded in $\overline\Omega\times [0,T]$ for any specified $\theta_e, \theta_m$.

For given boundary input, it is necessary to solve the forward problem 
\eqref{excitation-Ue} first. We denote the approximate solution of \eqref{excitation-Ue} is $u_{e,\theta_e}$, which is a deep neural networks function with the networks parameters $\theta_e$. Next the following residuals for the forward problem \eqref{excitation-Ue} are introduced.
\begin{itemize}
\item Interior PDE residual
\begin{eqnarray*}\label{res1-0}
\mathcal{R}_{{int},\theta_e}(x, t):= \left(c^{-1}\partial_t +\A \right) u_{e,\theta_e}(x,t), \quad (x,t) \in \Omega\times (0,T).
\end{eqnarray*}
\item Spatial boundary residual
\begin{eqnarray*}\label{res2-0}
\mathcal{R}_{sb,\theta_e}(x, t):= \mathcal B u_{e,\theta_e}(x, t)-\mathcal B u_{e}(x, t), \quad (x,t) \in \partial\Omega \times (0,T).
\end{eqnarray*}
\item Temporal boundary (initial status) residual
\begin{eqnarray*}\label{res3-0}
\mathcal{R}_{tb, \theta_e}(x):=u_{e,\theta_e}(x, 0), \quad  x \in \Omega.
\end{eqnarray*}
\end{itemize}
We minimize the following loss function
\begin{eqnarray}\label{loss-ue}
J_1(\theta_e) = \| \mathcal{R}_{{int},\theta_e}\|_{L^2(0,T;L^{2}(\Omega))}
+ \| \mathcal{R}_{tb,\theta_e}\|_{L^2(\Omega)}
+\|\mathcal{R}_{sb,\theta_e}\|_{L^2(0,T;L^{2}(\partial\Omega))}
\end{eqnarray}
to search for the optimal parameters $\theta^*_e$.
The obtained $u_e^*(x,t):=u_{e,\theta_e^{*}}(x,t)$ can be regarded as approximate solution of the forward problem \eqref{excitation-Ue}.

Next, we introduce a form loss function of data-driven solution of inverse problems
that ensures reconstruction accuracy owing to the conditional stability of the inverse problem.
For this purpose, we need to define suitable residuals measuring the errors of the governed system and the input data.
Define the following residuals for the  emission system.
\begin{itemize}
\item Interior PDE residual
\begin{eqnarray*}\label{res1}
\mathcal{R}_{{int},\theta_m,\theta_f,\theta_e^*}(x, t):= (c^{-1}\partial_t +\A ) u_{m,\theta_m}(x,t)- \mu_{f,\theta_f}(x,t)u_{e,\theta_e^*}(x, t; x_s), \quad (x,t) \in \Omega\times (0,T).
\end{eqnarray*}
\item Spatial boundary residual
\begin{eqnarray*}\label{res2}
\mathcal{R}_{sb,\theta_m}(x, t):= \mathcal B u_{m,\theta_m}(x, t), \quad (x,t) \in \partial\Omega\times (0,T).
\end{eqnarray*}
\item Temporal boundary (initial status) residual
\begin{eqnarray*}\label{res3}
\mathcal{R}_{tb, \theta_m}(x):=u_{m,\theta_m}(x, 0), \quad  x \in \Omega.
\end{eqnarray*}
\item Data residual
\begin{eqnarray*}\label{res4}
\mathcal{R}_{d, \theta_m}(x,t):=\frac{\partial u_{m,\theta_m}}{\n}-\varphi^\delta(x,t), \quad  (x,t) \in \Gamma\times(0,T).
\end{eqnarray*}
\end{itemize}
Thus, a loss function minimization scheme seeks to minimize these residuals comprehensively with some weights balancing different residuals. We define a new loss function involving the norm of the derivatives for some residuals, namely
\begin{equation}\label{loss-um}
\begin{aligned}
 J_2(\theta_f, \theta_m)
 =& \lambda\| \mathcal{R}_{d, \theta_m}\|_{L^2(\Gamma\times(0,T))}
+\|\mathcal{R}_{{int},\theta_m,\theta_f,\theta_e^*}\|_{L^2(0,T;L^{2}(\Omega))}\\
&+\| \mathcal{R}_{tb,\theta_m}\|_{H_0^1(\Omega)}+\|\mathcal{R}_{sb,\theta_m}\|_{H^1(0,T;{H^{1}(\partial\Omega)})},
\end{aligned}
\end{equation}
where $\lambda$ is a hyper-parameter to balance the residuals between the knowledge of PDE and the measurement data.

To evaluate the integrals in \eqref{loss-ue} and \eqref{loss-um} numerically, we introduce the training sets
\begin{eqnarray*}
&&\mathcal{S}_{d}:=\left\{(x_n,t_n): (x_n,t_n)\in \Gamma\times(0,T],\quad n=1,2,\cdots,N_{d} \right\},\nonumber \\
&&\mathcal{S}_{int}:=\left\{(\widetilde{x}_{n},\widetilde{t}_{n}): (\widetilde{x}_{n},\widetilde{t}_{n})\in \Omega_T, \quad n=1,2,\cdots,N_{int}\right\}, \nonumber \\
&&\mathcal{S}_{tb}:=\left\{(\overline{x}_{n},0): \overline{x}_{n}\in \Omega, \quad n=1,2,\cdots,N_{tb}\right\},\\
&&\mathcal{S}_{sb}:=\left\{(\widehat{x}_{n},\widehat{t}_{n}): (\widehat{x}_{n},\widehat{t}_{n})\in \partial\Omega_T,\quad n=1,2,\cdots,N_{sb} \right\}.\nonumber
\end{eqnarray*}
Applying these sets and the numerical quadrature rules \cite{Mishra2}, we can consider the following two empirical loss function
\begin{eqnarray}\label{Loss1-1}
J_1^N(\theta_e)
=\sum_{n=1}^{\bar N_{int}}\bar\omega_n^{int}\Big|\mathcal{R}_{{int},\theta_e}(\widetilde{x}_{n}, \widetilde{t}_{n})\Big|^2
+\sum_{n=1}^{\bar N_{tb}}\bar\omega_n^{tb}\Big|\mathcal{R}_{tb,\theta_e}(\overline{x}_{n})\Big|^2
+\sum_{n=1}^{\bar N_{sb}}\bar\omega_n^{sb}\Big|\mathcal{R}_{sb,\theta_e}(\widehat{x}_{n}, \widehat{t}_{n})\Big|^2,
\end{eqnarray}
and
\begin{equation}\label{Loss1-2}
\begin{aligned}
J_2^N(\theta_f,\theta_m)
=&\lambda\sum_{n=1}^{N_{d}}\omega_n^{d}\Big|\mathcal{R}_{d,\theta_m}(x_n,t_n)\Big|^2
+\sum_{n=1}^{N_{int}}\omega_n^{int}\Big|\mathcal{R}_{{int},\theta_f,\theta_m,\theta_e^*}(\widetilde{x}_{n}, \widetilde{t}_{n})\Big|^2 \\
&+\sum_{n=1}^{N_{tb}}\omega_n^{tb,0}\Big|\mathcal{R}_{tb,\theta_m}(\overline{x}_{n})\Big|^2
+\sum_{n=1}^{N_{tb}}\omega_n^{tb,1}\Big|\partial_t\mathcal{R}_{tb,\theta_m}(\overline{x}_{n})\Big|^2\\
&+\sum_{n=1}^{N_{sb}}\omega_n^{sb,0}\Big|\mathcal{R}_{sb,\theta}(\widehat{x}_{n}, \widehat{t}_{n})\Big|^2
+\sum_{n=1}^{N_{sb}}\omega_n^{sb,1}\Big|\frac{\partial\mathcal{R}_{sb,\theta}}{\n}(\widehat{x}_{n}, \widehat{t}_{n})\Big|^2 \\
&+\sum_{n=1}^{N_{sb}}\omega_n^{sb,2}\Big| \frac{\partial \mathcal{R}_{sb,\theta}}{\partial t}(\widehat{x}_{n}, \widehat{t}_{n})\Big|^2
+\sum_{n=1}^{N_{sb}}\omega_n^{sb,3}\Big|\frac{ \partial^2 \mathcal{R}_{sb,\theta}}{\partial t\n}(  \widehat{x}_{n}, \widehat{t}_{n})\Big|^2,
\end{aligned}
\end{equation}
where the coefficients
$\bar\omega^{int}_n,\;\bar\omega^{tb}_n,\; \bar\omega^{sb}_n,\ \omega^{d}_n,\;
\omega^{int}_n,\;\omega^{tb,k}_n,\; \omega^{sb,j}_n$ with $k=0,1$ and $j=0,1,2,3$ are the quadrature weights. Applying the quadrature rules \eqref{quadrature}, it is easy to see that the error for the loss function is
\begin{equation}\label{Loss-error}
\begin{aligned}
|J_1(\theta_e)-J_1^N(\theta_e)|
&\le C\max\{ \bar N_{int}^{-\bar\alpha_{int}},\bar N_{tb}^{-\bar\alpha_{tb}},
\bar N_{sb}^{-\bar\alpha_{sb}}\},\\
|J_2(\theta_f,\theta_m)-J_2^N(\theta_f,\theta_m)|
&\le C\max_{k=0,1,j=0,1,2,3}\{ N_{d}^{-\alpha_{d}},N_{int}^{-\alpha_{int}},N_{tb}^{-\alpha_{tb,k}},
N_{sb}^{-\alpha_{sb,j}}\},
\end{aligned}
\end{equation}
where $C$ depends on the continuous norm $\|\cdot\|_{C(\Omega)}$ of the integrands.
Therefore, the underlying solutions and neural networks have to be sufficiently regular such that the residuals can be approximated to high accuracy by the quadrature rules.

\subsection{The estimates of generalization error.}\label{error}

We give the regularity estimate of the PDEs with non-homogenous boundary condition.
The following $L^p$ estimate on elliptic system can be found in reference \cite{agmon1959estimates}.

\begin{lemma}\label{HtwoM}
Let $w(x)\in H^2(\Omega)$ solve
\begin{equation*}
\begin{cases}
\begin{aligned}
-\Delta w&=f(x), &&x\in\Omega,\\
\frac{\partial w}{\n}+ \beta w&=b(x), &&x\in\partial\Omega,
\end{aligned}
\end{cases}
\end{equation*}
for $0<\beta_0\le \beta(x)\in C(\partial\Omega)$. Then we have
$$\norm{w}_{H^2(\Omega)}\le C(\norm{f}_{L^2(\Omega)}+\|\tilde b\|_{H^1(\Omega)})\le C(\norm{f}_{L^2(\Omega)}+\|b\|_{H^{1/2}(\partial\Omega)}),$$
where $\tilde b(x)\in H^1(\Omega)$ is the extension of $b(x)\in H^{1/2}(\partial\Omega)$.
\end{lemma}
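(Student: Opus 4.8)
The plan is to recognize this as a standard Agmon--Douglis--Nirenberg (ADN) elliptic regularity estimate for the Laplacian equipped with a Robin boundary operator, and to reduce the stated bound to the cited $L^p$ theory through two reductions: verifying that the boundary operator is admissible, and then removing the lower-order remainder that the a priori estimate inevitably carries. I would apply the estimate directly to $w$ rather than homogenizing the boundary data, since subtracting the extension $\tilde b\in H^1(\Omega)$ would introduce the term $\Delta\tilde b$, which need not lie in $L^2(\Omega)$; instead I track the boundary data through its $H^{1/2}(\partial\Omega)$ norm.

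The core step is the ADN a priori estimate, which for $p=2$ (so $W^{2,2}=H^2$ and the trace space $W^{1-1/p,p}=H^{1/2}$) reads
\[
\norm{w}_{H^2(\Omega)}\le C\big(\norm{f}_{L^2(\Omega)}+\|b\|_{H^{1/2}(\partial\Omega)}+\norm{w}_{L^2(\Omega)}\big).
\]
To invoke the cited theorem I must check that $\mathcal B w=\frac{\partial w}{\n}+\beta w$ satisfies the complementing (Lopatinskii--Shapiro) condition relative to $-\Delta$. Freezing coefficients at a boundary point and passing to the half-space model $\{s>0\}$, the principal part of $\mathcal B$ is the conormal derivative $\partial_s$, since $\beta w$ is lower order. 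For each nonzero tangential frequency $\xi'$, the decaying solution of $-w''+|\xi'|^2 w=0$ on $(0,\infty)$ is $w(s)=e^{-|\xi'|s}$, for which $\partial_s w(0)=-|\xi'|\neq 0$; hence no nontrivial decaying solution is annihilated by the boundary symbol, which is exactly the complementing condition. This legitimizes the a priori bound above.

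It remains to absorb the lower-order term $\norm{w}_{L^2(\Omega)}$, and here I would exploit the coercivity furnished by $\beta\ge\beta_0>0$. Testing $-\Delta w=f$ against $w$ and inserting $\frac{\partial w}{\n}=b-\beta w$ gives the energy identity
\[
\int_\Omega |\nabla w|^2\,dx+\int_{\partial\Omega}\beta\,w^2\,ds=\int_\Omega fw\,dx+\int_{\partial\Omega} bw\,ds,
\]
whose left-hand side dominates $c\norm{w}_{H^1(\Omega)}^2$, using $\beta\ge\beta_0$ together with the equivalence of $\norm{\nabla w}_{L^2(\Omega)}^2+\norm{w}_{L^2(\partial\Omega)}^2$ with the full $H^1$ norm. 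Estimating the right-hand side by Cauchy--Schwarz, the trace theorem, and Young's inequality yields $\norm{w}_{H^1(\Omega)}\le C(\norm{f}_{L^2(\Omega)}+\|b\|_{H^{-1/2}(\partial\Omega)})$, which in particular controls $\norm{w}_{L^2(\Omega)}$ by the data. Feeding this back into the ADN bound eliminates the $\norm{w}_{L^2}$ term and delivers the first inequality; the second is then immediate from the boundedness of a right inverse of the trace operator, giving $\|\tilde b\|_{H^1(\Omega)}\le C\|b\|_{H^{1/2}(\partial\Omega)}$.

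The main obstacle I anticipate is not a single computation but the clean absorption of the lower-order remainder: the ADN estimate inherently retains $\norm{w}_{L^2}$, and removing it hinges on uniqueness/coercivity of the Robin problem. The strict positivity $\beta\ge\beta_0>0$ is precisely what guarantees this coercivity (equivalently, the triviality of the kernel), so the hypothesis is essential; with only $\beta\ge 0$ one meets the Neumann-type obstruction of constants, and the estimate would fail without a quotient or orthogonality correction.
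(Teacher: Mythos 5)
Your proposal is correct and follows the same route the paper takes implicitly: the paper gives no proof of this lemma at all, merely citing the Agmon--Douglis--Nirenberg $L^p$ theory, and your argument is exactly the standard derivation behind that citation --- the ADN a priori estimate (with the complementing condition checked for the Robin operator) combined with the energy/coercivity bound furnished by $\beta\ge\beta_0>0$ to absorb the lower-order $\norm{w}_{L^2(\Omega)}$ term, and the boundedness of a trace right inverse for the final inequality. Your observation that naively homogenizing with $\tilde b$ would produce an uncontrolled $\Delta\tilde b$ is a sensible reason to apply the estimate to $w$ directly, and no step of your argument fails.
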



\begin{lemma}
Consider the following PDEs with non-homogenous boundary condition
\begin{equation}\label{PDEs-regular}
\begin{cases}
\begin{aligned}
\left(c^{-1}\partial_t +\A \right)  V(x,t)&=0,  &&(x,t)\in\Omega\times (0,T),\\
V(x,0)&=0, &&x\in\Omega,\\
\mathcal B V(x,t)&=B(x,t), && (x,t)\in\partial\Omega\times(0,T),
\end{aligned}
\end{cases}
\end{equation}
if the boundary condition $B\in H^1(0,T;{H^{1/2}(\partial\Omega)})$, then there exists a unique solution $V \in L^2(0,T;H^2(\Omega))$ with the estimate
\begin{align*}
\| V \|_{L^2(0,T;H^{2}(\Omega))}
\leq C \|B\|_{H^1(0,T;{H^{1/2}(\partial\Omega)})}.
\end{align*}
\end{lemma}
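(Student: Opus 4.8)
The plan is to remove the inhomogeneous Robin data by an elliptic lifting and then solve the resulting homogeneous-boundary parabolic problem by eigenfunction expansion. First I would introduce, for a.e. fixed $t\in(0,T)$, the lifting $W(\cdot,t)$ solving the stationary problem
\begin{equation*}
\A W(\cdot,t)=0\ \text{in}\ \Omega,\qquad \mathcal B W(\cdot,t)=B(\cdot,t)\ \text{on}\ \partial\Omega .
\end{equation*}
Lemma \ref{HtwoM}, whose $L^2$ elliptic estimate extends from $-\Delta$ to the operator $\A$ with smooth coefficients $\kappa,\mu_a$ via \cite{agmon1959estimates}, gives $\norm{W(\cdot,t)}_{H^2(\Omega)}\le C\norm{B(\cdot,t)}_{H^{1/2}(\partial\Omega)}$. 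Since the solution operator $\mathcal E:B(\cdot,t)\mapsto W(\cdot,t)$ is linear and independent of $t$, it commutes with $\partial_t$, so $\partial_t W=\mathcal E\,\partial_t B$ solves the same elliptic problem with data $\partial_t B$. Integrating both estimates in time yields $W\in H^1(0,T;H^2(\Omega))$ together with the bound
\begin{equation*}
\norm{W}_{L^2(0,T;H^2(\Omega))}+\norm{\partial_t W}_{L^2(0,T;H^2(\Omega))}\le C\,\norm{B}_{H^1(0,T;H^{1/2}(\partial\Omega))} .
\end{equation*}

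Next I would set $\tilde V:=V-W$. Using $\A W=0$ and $\mathcal B W=B$, the function $\tilde V$ solves the homogeneous-boundary problem $(c^{-1}\partial_t+\A)\tilde V=-c^{-1}\partial_t W=:F$ with $\mathcal B\tilde V=0$ and initial value $\tilde V(\cdot,0)=-W(\cdot,0)$; the embedding $H^1(0,T;H^2)\hookrightarrow C([0,T];H^2)$ makes this trace meaningful, and $F\in L^2(0,T;L^2(\Omega))$ with $\norm{F}_{L^2(0,T;L^2(\Omega))}\le C\norm{B}_{H^1(0,T;H^{1/2}(\partial\Omega))}$. Because $\mathcal B\tilde V=0$ matches the boundary condition of the eigenfunctions $\{\varphi_n\}$ from Section \ref{sec_pre}, I expand $\tilde V(x,t)=\sum_n v_n(t)\varphi_n(x)$, reducing the PDE to the scalar ODEs $c^{-1}v_n'+\lambda_n v_n=F_n$ with $v_n(0)=v_{0,n}$, whose solution is $v_n(t)=v_{0,n}e^{-c\lambda_n t}+c\int_0^t e^{-c\lambda_n(t-s)}F_n(s)\,ds$.

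The main estimate, and the real obstacle, is the maximal-regularity bound $\sum_n\lambda_n^2\int_0^T|v_n(t)|^2\,dt\le C(\norm{\tilde V(\cdot,0)}_{H^2(\Omega)}^2+\norm{F}_{L^2(0,T;L^2(\Omega))}^2)$, which suffices because on $\mathcal D(\A)=\{w\in H^2:\mathcal B w=0\}$ the norm $\norm{\cdot}_{H^2(\Omega)}$ is equivalent to $(\sum_n\lambda_n^2|\cdot|^2)^{1/2}$ by the $\A$-version of Lemma \ref{HtwoM} applied with $f=\A w$ and $b=0$. For the initial-data term I use $\int_0^T e^{-2c\lambda_n t}\,dt\le (2c\lambda_n)^{-1}$, so that $\sum_n\lambda_n^2|v_{0,n}|^2\int_0^T e^{-2c\lambda_n t}\,dt\le C\sum_n\lambda_n|v_{0,n}|^2\le C\norm{\tilde V(\cdot,0)}_{H^1(\Omega)}^2$. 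For the source term I write $\lambda_n v_n^{\mathrm{src}}=\big(c\lambda_n e^{-c\lambda_n\cdot}\mathbf{1}_{[0,\infty)}\big)*\bar F_n$ (with $\bar F_n$ the extension of $F_n$ by zero) and apply Young's convolution inequality using $\norm{c\lambda_n e^{-c\lambda_n\cdot}}_{L^1(0,\infty)}=1$ to get $\norm{\lambda_n v_n^{\mathrm{src}}}_{L^2(0,T)}\le\norm{F_n}_{L^2(0,T)}$; summing over $n$ gives exactly $\norm{F}_{L^2(0,T;L^2(\Omega))}^2$.

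Finally, combining the two contributions and adding back the lifting through $\norm{V}_{L^2(0,T;H^2(\Omega))}\le\norm{\tilde V}_{L^2(0,T;H^2(\Omega))}+\norm{W}_{L^2(0,T;H^2(\Omega))}$ yields the claimed bound. Uniqueness follows from linearity: taking $B=0$ forces $W=0$, hence $F=0$ and $\tilde V(\cdot,0)=0$, so every $v_n\equiv0$ and $V=0$. I expect the only delicate points to be the passage of the temporal $H^1$-regularity from $B$ to $W$ (handled by the commutation of $\mathcal E$ with $\partial_t$) and the convolution estimate yielding the two-derivative gain; the corner incompatibility between the zero initial datum and a possibly nonzero $B(\cdot,0)$ is harmless here, since only $L^2$-in-time regularity is asserted.
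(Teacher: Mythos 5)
Your proof follows the same basic decomposition as the paper's: an elliptic lifting $W(\cdot,t)$ of the Robin data (the paper calls it $\Lambda_b$), the observation that the solution operator commutes with $\partial_t$ so that $\partial_t W$ obeys the same elliptic bound, and then a homogeneous-boundary parabolic problem for $V-W$ driven by $-c^{-1}\partial_t W$. The two genuine differences are in how the reduced problem is handled. First, where the paper simply cites the regularity estimates in \cite{Evans} to get $\norm{V_c}_{L^2(0,T;H^2(\Omega))}\le C\norm{\partial_t\Lambda_b}_{L^2(0,T;L^2(\Omega))}$, you prove the needed maximal-regularity bound from scratch via the eigenfunction expansion of $\A$, Young's inequality for the convolution kernel $c\lambda_n e^{-c\lambda_n t}$ (whose $L^1$ norm is exactly $1$), and the spectral characterization of $H^2\cap\{\mathcal B w=0\}$; this is self-contained and correct, at the cost of needing the norm equivalence $\norm{w}_{H^2(\Omega)}\sim(\sum_n\lambda_n^2|\langle w,\varphi_n\rangle|^2)^{1/2}$ on $\mathcal D(\A)$, which is standard but worth stating. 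Second, the paper asserts $V_c(x,0)=-\Lambda_b(x,0)=0$, which tacitly requires $B(\cdot,0)=0$ and is not justified for a general $B\in H^1(0,T;H^{1/2}(\partial\Omega))$ (nor for the boundary residual to which the lemma is later applied); you instead keep the initial term $-W(\cdot,0)$, make sense of it via $H^1(0,T;H^2)\hookrightarrow C([0,T];H^2)$, and absorb it with the elementary bound $\int_0^T e^{-2c\lambda_n t}\,dt\le(2c\lambda_n)^{-1}$. So your argument is not only correct but slightly more careful than the paper's on that point, while reaching the same estimate.
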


\begin{proof}
For any fixed $t\in[0,T]$, define $\Lambda_b(x, t)$ from
\begin{eqnarray}\label{elliptic}
\begin{cases}
 -\A  \Lambda_b(x, t)=0, &x\in \Omega,\\
  \mathcal B \Lambda_b(x, t)=B(x,t), &x\in\partial\Omega.
\end{cases}
\end{eqnarray}
By the regularity estimate on elliptic problem and Lemma \ref{HtwoM},  there exists a unique solution $\Lambda_b(\cdot,t)\in H^2(\Omega)$ satisfying $\|\Lambda_b(\cdot,t)\|_{H^{2}(\Omega)}\le C_t\|\tilde B(\cdot, t)\|_{H^1(\Omega)}$. Since $\beta(x)$ is independent of $t$, we also have for $k=0,1,2$ that
\begin{eqnarray*}\label{SEP00-1}
\|\partial_t^k \Lambda_b(\cdot,t)\|_{H^2(\Omega)}\leq C \|\partial_t^k \tilde B(\cdot,t)\|_{H^1(\Omega)}\le C\|\partial_t^k B(\cdot,t)\|_{H^{1/2}(\partial\Omega)},\quad t\in[0,T].
\end{eqnarray*}
Let $AC[0,T]$ be the space of absolutely continuous functions. For $B(x,\cdot)\in AC[0,T]$, it follows $\Lambda_b(x,\cdot)\in AC[0,T]$ by \eqref{elliptic}. So we can decompose $V=V_c+\Lambda_b$, where $V_c$ satisfies
\begin{equation*}\label{FB4-an}
\begin{cases}
\begin{aligned}
(c^{-1}\partial_t +\A ) V_c(x,t)&=-c^{-1} \partial_t \Lambda_b, &&(x,t)\in \Omega\times(0,T),\\
\mathcal{B} V_c(x,t)&=0, &&(x,t)\in \partial\Omega\times [0,T],\\
V_c(x,0)&=-\Lambda_b(x,0)=0, &&x\in\Omega.
\end{aligned}
\end{cases}
\end{equation*}
From the regularity estimates in \cite{Evans}, there holds
\begin{eqnarray*}
\| V_c \|_{L^2(0,T;H^{2}(\Omega))}
\leq C\| \partial_t\Lambda_b\|_{L^2(0,T;L^2(\Omega))}
\leq C\|\partial_t B\|_{L^2(0,T;{H^{1/2}(\partial\Omega)})}.
\end{eqnarray*}
Sequentially, we have
\begin{align*}
\| V \|_{L^2(0,T;H^{2}(\Omega))}
&\leq \| \Lambda_b \|_{L^2(0,T;H^{2}(\Omega))} + \| V_c \|_{L^2(0,T;H^{2}(\Omega))}   \\
&\leq C(\|B\|_{L^2(0,T;{H^{1/2}(\partial\Omega)})} + \|\partial_t B\|_{L^2(0,T;{H^{1/2}(\partial\Omega)})} ) \\
&\leq C \|B\|_{H^1(0,T;{H^{1/2}(\partial\Omega)})}.
\end{align*}
The proof is complete.
\end{proof}
Now, we define the generalization errors as
\begin{equation}\label{gener1}
\begin{cases}
\begin{aligned}
\mathcal{E}_{G,u_e}:=&\left\|u_e^{*}-u_{e,ex}\right\|_{C\left([0, T] ;L^{2}(\Omega)\right)},\\
\mathcal{E}_{G,\vec {p}}:=&\left\| {\vec p_*-\vec p_{ex}}\right\|_{\mathcal{L}}, \\
\mathcal{E}_{G,u_m}:=&\left\|u_m^{*}-u_{m,ex}\right\|_{C\left([0, T] ;
L^{2}(\Omega)\right)},
\end{aligned}
\end{cases}
\end{equation}
where $(u^{*}_e, u^{*}_m, \mu_f^*) :=(u_{e,\theta_e^{*}},u_{m,\theta_m^{*}},\mu_{f,\theta_f^*})$ with the minimizer $(\theta_e^*,\theta_m^*,\theta_f^*)$ of functional \eqref{Loss1-1} and \eqref{Loss1-2}. They can be used to approximate  the exact solution $(u_{e,ex},u_{m,ex},\mu_{f,ex})$ of the inverse problem \eqref{IP}. The related vector forms are given as $\vec p_*:=(p^*_1,\cdots,p^*_K)$ and
$\vec p_{ex}:=(p_{1,ex},\cdots,p_{K,ex})$, which are the approximate networks solution corresponding to $\mu_f^* u_e^*$ and exact $\mu_{f,ex} u_{e,ex}$, respectively. We will estimate the generalization errors in terms of the training errors for both excitation and emission, which are given below.

Define the training errors for excitation process:
\begin{itemize}
\item The interior PDE training errors
\begin{eqnarray*}\label{training1-ue}
\bar{\mathcal{E}}_{T,int}:=\Big(\sum_{n=1}^{\bar N_{int}}\bar\omega_n^{int}\Big|\mathcal{R}_{{int},\theta^*_e}(\widetilde{x}_{n}, \widetilde{t}_{n})\Big|^2\Big)^{1/2}.
\end{eqnarray*}
\item The initial condition training errors
\begin{eqnarray*}\label{training2-ue}
\bar{\mathcal{E}}_{T,tb}
:=\Big(\sum_{n=1}^{\bar N_{tb}}\bar\omega_n^{tb}\Big|\mathcal{R}_{tb,\theta^{*}_e}(\overline{x}_{n})\Big|^2\Big)^{1/2}.
\end{eqnarray*}
\item The spatial boundary condition training errors
\begin{eqnarray*}\label{training3-ue}
\bar{\mathcal{E}}_{T,sb}
:=\Big(\sum_{n=1}^{\bar N_{sb}}\bar \omega_n^{sb}\Big|\mathcal{R}_{sb,\theta^{*}_e}(\widehat{x}_{n}, \widehat{t}_{n})\Big|^2\Big)^{1/2}.
\end{eqnarray*}
\end{itemize}
Define the training errors for emission process:
\begin{itemize}
\item The measurement data training errors
\begin{eqnarray*}\label{training1}
\mathcal{E}_{T,d}:=\Big(\sum_{n=1}^{N_{d}}\omega_n^{d} \Big|\mathcal{R}_{d,\theta^*_m}(x_n,t_n) \Big|^2\Big)^{1/2}.
\end{eqnarray*}
\item The interior PDE training errors
\begin{eqnarray*}\label{training2}
\mathcal{E}_{T,int}:=\Big(\sum_{n=1}^{N_{int}}\omega_n^{int} \Big|\mathcal{R}_{{int},\theta^*_f,\theta^*_m,\theta^*_e}(\widetilde{x}_{n}, \widetilde{t}_{n})\Big|^2\Big)^{1/2}.
\end{eqnarray*}
\item The initial condition training errors $\mathcal{E}_{T,tb,0}:=\mathcal{E}_{T,tb}+\mathcal{E}_{T,tb,1}$, where
\begin{eqnarray*}\label{training4}
\begin{cases}
\mathcal{E}_{T,tb,0}
:=\left(\sum_{n=1}^{N_{tb}}\omega_n^{tb,0}\Big|\mathcal{R}_{tb,\theta^{*}_m}(\overline{x}_{n})\Big|^2\right)^{1/2},
\\
\mathcal{E}_{T,tb,1}
:=\left(\sum_{n=1}^{N_{tb}}\omega_n^{tb,1} \Big|\partial_t\mathcal{R}_{tb,\theta^{*}_m}(\overline{x}_{n}) \Big|^2\right)^{1/2}.
\end{cases}
\end{eqnarray*}
\item The spatial boundary condition training errors
    $\mathcal{E}_{T,sb}:=\mathcal{E}_{T,sb,0}+\mathcal{E}_{T,sb,1}+\mathcal{E}_{T,sb,2}+\mathcal{E}_{T,sb,3}$, where
\begin{eqnarray*}\label{training3}
\begin{cases}
\mathcal{E}_{T,sb,0}
:=\left(\sum_{n=1}^{N_{sb}}\omega_n^{sb,0} \Big|\mathcal{R}_{sb,\theta^{*}_m}(\widehat{x}_{n}, \widehat{t}_{n}) \Big|^2\right)^{1/2},\\
\mathcal{E}_{T,sb,1}
:=\left(\sum_{n=1}^{N_{sb}}\omega_n^{sb,1} \Big|\frac{\partial \mathcal{R}_{sb,\theta^*_m}}{\n}(\widehat{x}_{n}, \widehat{t}_{n})\Big|^2\right)^{1/2},\\
\mathcal{E}_{T,sb,2}
:=\left(\sum_{n=1}^{N_{sb}}\omega_n^{sb,2} \Big| \frac{\partial\mathcal{R}_{sb,\theta^{*}_m}(\widehat{x}_{n}, \widehat{t}_{n}) 
 }{\partial t} \Big|^2\right)^{1/2},\\
\mathcal{E}_{T,sb,3}
:=\left(\sum_{n=1}^{N_{sb}}\omega_n^{sb,3}\Big|\frac{ \partial^2 \mathcal{R}_{sb,\theta^*_m}}{\partial t\n}(\widehat{x}_{n}, \widehat{t}_{n})\Big|^2\right)^{1/2}.\\
\end{cases}
\end{eqnarray*}
\end{itemize}
We can compute these errors from the loss function \eqref{Loss1-1} using automatic differentiation in case of derivative terms.

The above derivations have established the following generalization error estimates for the inverse problem.

\begin{theorem}\label{theorem-error}
Under the assumption of Theorem \ref{theorem-uniq}, there exists a unique solution to the inverse problem \eqref{IP}.
Moreover, for the approximate solution $u_e^*$ of the forward problem with $\theta_e^*$ being a global minimizer of the loss function  $J_1^N(\theta_e)$, and $(\mu_f^*,u_m^*)$ of the inverse problem with $(\theta_f^*,\theta_m^*)$ being a global minimizer of the loss function  $J_2^N(\theta_f,\theta_m)$, we have the following generalization error estimates
\begin{equation}\label{IPue}
\begin{aligned}
\mathcal{E}_{G,u_e}
&\leq C\Big(\mathcal{\bar E}_{T,int}
+ \mathcal{\bar E}_{T,sb}
+ \mathcal{\bar E}_{T,tb}
+\bar C_{q}^{\frac{1}{2}} \bar N^{\frac{-\bar \alpha}{2}}
\Big),   \\
\mathcal{E}_{G,\vec p}
&\leq C\Big(\mathcal{\bar E}_{T,int}
+ \mathcal{\bar E}_{T,sb}
+ \mathcal{\bar E}_{T,tb} + \mathcal{E}_{T,d}+ \mathcal{E}_{T,int}
+ \mathcal{E}_{T,sb}
+ \mathcal{E}_{T,tb}
+C_{q}^{\frac{1}{2}} N^{\frac{-\alpha}{2}}
+\delta\Big),
\end{aligned}
\end{equation}
where the constants $C$ depend only on $\Omega$ and $T$. Furthermore, these constants are selected as
\begin{eqnarray*}
&&\bar C_{q}^{\frac{1}{2}} \bar N^{\frac{-\bar \alpha}{2}} = \max\Big\{ \bar C_{q}^{\frac{1}{2}} N_{int}^{\frac{- \bar \alpha_{int}}{2}},  \bar C_{qt}^{\frac{1}{2}}  N_{tb}^{\frac{- \bar \alpha_{tb}}{2}},  \bar C_{qs}^{\frac{1}{2}} N_{sb}^{\frac{- \bar \alpha_{sb}}{2}} \Big\},\\
&&C_{q}^{\frac{1}{2}} N^{\frac{-\alpha}{2}} = \max_{k=0,1,\ j=0,1,2,3}\Big\{\bar C_{q}^{\frac{1}{2}} \bar N^{\frac{-\bar \alpha}{2}}, C_{qd}^{\frac{1}{2}} N_{d}^{\frac{-\alpha_{d}}{2}}, C_{q}^{\frac{1}{2}} N_{int}^{\frac{-\alpha_{int}}{2}}, C_{qt,k}^{\frac{1}{2}} N_{tb}^{\frac{-\alpha_{tb,k}}{2}}, C_{qs,j}^{\frac{1}{2}} N_{sb}^{\frac{-\alpha_{sb,j}}{2}} \Big\},
\end{eqnarray*}
with the constant
\begin{align*}
\bar C_{q}=\bar C_{q}\Big( \Big\|\mathcal{R}_{int,\theta_e^*}\Big\|_{C(\Omega_T)}\Big),
\ \bar C_{qs}=\bar C_{qs}\Big(\Big\|\mathcal{R}_{sb,\theta_e^*}\Big\|_{C(\partial\Omega_T)}\Big),
\ \bar C_{qt}=\bar C_{qt}\Big(\Big\|\mathcal{R}_{tb,\theta_e^*}\Big\|_{C(\Omega)}\Big),
\end{align*}
and
\begin{align*}
&C_{qd}=C_{qd}\Big(\Big\|q^*\mathcal{R}_{d,\theta_m^*}\Big\|_{C(\Omega)}\Big),
&& C_{q}=C_{q}\Big(\Big\|\mathcal{R}_{int,\theta_e^*,\theta_f^*,\theta_m^*}\Big\|_{C(\Omega_T)}\Big),\\
&C_{qs,0}=C_{qs,0}\Big(\Big\|\mathcal{R}_{sb,\theta_m^*}\Big\|_{C(\partial\Omega_T)}\Big),
&& C_{qs,1}=C_{qs,1}\Big(\Big\|  \frac{\partial \mathcal{R}_{sb,\theta_m^*}}{\n}\Big\|_{C(\partial\Omega_T)}\Big),\\ &C_{qs,2}=C_{qs,2}\Big(\Big\| 
 \frac{\partial\mathcal{R}_{sb,\theta_m^*}}{\partial t}\Big\|_{C(\partial\Omega_T)}\Big),
&& C_{qs,3}=C_{qs,3}\Big(\Big\|\frac{\partial^2\mathcal{R}_{sb,\theta_m^*}}{\partial t\n}\Big\|_{C(\partial\Omega_T)}\Big),\\
&C_{qt,0}=C_{qt,0}\Big(\Big\|\mathcal{R}_{tb,\theta_m^*}\Big \|_{C(\Omega)}\Big),&& C_{qt,1}=C_{qt,1}\Big(\Big\| \frac{\partial\mathcal{R}_{tb,\theta_m^*}}{\partial t}\Big\|_{C(\Omega)}\Big),
\end{align*}
where we denote $\Omega_T:=\Omega\times(0,T)$ and $\partial\Omega_T:=\partial\Omega\times(0,T)$.
\end{theorem}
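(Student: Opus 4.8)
The plan is to establish the two lines of \eqref{IPue} in turn, handling the excitation stage first because the trained network $u_e^*$ enters the emission residuals. Throughout, the scheme is: (i) write the relevant network error as the solution of a parabolic system whose data are exactly the residuals appearing in the loss functions, (ii) invoke a stability/regularity estimate for that system, and (iii) pass from the continuous residual norms to the discrete training errors using the quadrature bound \eqref{quadrature}, which contributes the $\bar C_q^{1/2}\bar N^{-\bar\alpha/2}$ and $C_q^{1/2}N^{-\alpha/2}$ remainders.

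\textbf{Step 1 (excitation estimate).} I set $V_e:=u_e^*-u_{e,ex}$. Since $u_{e,ex}$ solves \eqref{excitation-Ue} exactly, $V_e$ solves
\begin{align*}
(c^{-1}\partial_t+\A)V_e &= \mathcal{R}_{int,\theta_e^*}, \\
V_e(\cdot,0) &= \mathcal{R}_{tb,\theta_e^*}, \\
\mathcal B V_e &= \mathcal{R}_{sb,\theta_e^*}.
\end{align*}
A standard parabolic energy estimate (test against $V_e$, use coercivity of $\A$ with the Robin condition, lift the boundary residual, and apply Gr\"onwall in $t$) gives $\mathcal{E}_{G,u_e}=\|V_e\|_{C([0,T];L^2(\Omega))}\le C\,J_1(\theta_e^*)$, i.e. a bound by $\|\mathcal{R}_{int,\theta_e^*}\|_{L^2(0,T;L^2(\Omega))}+\|\mathcal{R}_{tb,\theta_e^*}\|_{L^2(\Omega)}+\|\mathcal{R}_{sb,\theta_e^*}\|_{L^2(0,T;L^2(\partial\Omega))}$. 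Note that only $L^2$-type residual norms are needed here, which is exactly why $J_1$ in \eqref{loss-ue} uses no derivative terms. Each continuous norm is then replaced by its discrete training error: since the squared continuous norm equals the quadrature sum (the squared training error) up to an error of order $\bar N^{-\bar\alpha}$, one obtains $\|\mathcal{R}_{int,\theta_e^*}\|_{L^2}\le \bar{\mathcal E}_{T,int}+(\bar C_q\bar N^{-\bar\alpha})^{1/2}$ and likewise for the other two, assembling the first line of \eqref{IPue}.

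\textbf{Step 2 (source estimate via conditional stability).} The backbone is Theorem \ref{stability-inv}. Let $\hat u_m$ denote the \emph{exact} solution of \eqref{emission-um} with source $L\vec p_*$; applying the conditional stability yields $\mathcal{E}_{G,\vec p}=\|\vec p_*-\vec p_{ex}\|_{\mathcal L}\le \big\|\frac{\partial \hat u_m}{\n}-\frac{\partial u_{m,ex}}{\n}\big\|_{L^2(\Gamma\times(0,T))}$. I then split the right-hand side by the triangle inequality into the flux of $W:=\hat u_m-u_m^*$, the data residual $\mathcal{R}_{d,\theta_m^*}=\frac{\partial u_m^*}{\n}-\varphi^\delta$, and the noise, bounding $\|\varphi^\delta-\varphi\|_{L^2(\Gamma\times(0,T))}\le\delta$. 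The function $W$ solves a parabolic problem whose interior source is $-\mathcal{R}_{int,\theta_m^*,\theta_f^*,\theta_e^*}$, augmented by a coupling term measuring the discrepancy between $u_e^*$ and $u_{e,ex}$ in the emission source, whose initial value is $-\mathcal{R}_{tb,\theta_m^*}$, and whose boundary value is $-\mathcal{R}_{sb,\theta_m^*}$. To control $\big\|\frac{\partial W}{\n}\big\|_{L^2(\Gamma\times(0,T))}$ I would use the trace inequality $\big\|\frac{\partial W}{\n}\big\|_{L^2(\Gamma\times(0,T))}\le C\|W\|_{L^2(0,T;H^2(\Omega))}$ followed by the regularity estimate of the lemma preceding this theorem for \eqref{PDEs-regular}, extended to the non-homogeneous source and initial datum via the decomposition $W=W_c+\Lambda_b$ together with the parabolic estimate of \cite{Evans}. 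This is precisely what forces the stronger norms in $J_2$: the $H^1(0,T;H^1(\partial\Omega))$ norm on $\mathcal{R}_{sb}$ and the $H_0^1(\Omega)$ norm on $\mathcal{R}_{tb}$ are exactly the quantities the $H^2$-in-space, uniform-in-time regularity of $W$ consumes, while the coupling term is absorbed using $\mathcal{E}_{G,u_e}$ from Step 1 (which is why the excitation training errors reappear in the second line). Converting every continuous norm to its discrete training error plus quadrature remainder through \eqref{quadrature} then produces the second line of \eqref{IPue}.

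\textbf{Main obstacle.} The crux is the flux bound on $W$: obtaining an $L^2(\Gamma\times(0,T))$ control of the \emph{normal-derivative trace} demands full $H^2$-in-space, uniform-in-time regularity of a parabolic solution carrying \emph{simultaneously} a non-homogeneous interior source, initial datum and boundary datum, and then matching each required norm precisely to a residual actually measured in \eqref{loss-um}. Tracking the propagation of the excitation error $u_{e,ex}-u_e^*$ through the coupled emission source, so that it is absorbed cleanly by $\mathcal{E}_{G,u_e}$ without degrading the convergence rate, is the delicate bookkeeping that ties the two stages together.
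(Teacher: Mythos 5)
Your proposal follows essentially the same route as the paper: write the excitation error as a parabolic solution driven by the residuals and bound it by regularity estimates; then apply the conditional stability of Theorem \ref{stability-inv}, split the boundary flux discrepancy into the data residual, the noise level $\delta$, and fluxes of auxiliary parabolic solutions driven by the emission residuals and by the excitation error, control those fluxes via the trace theorem and $H^2$-in-space regularity (which is exactly what requires the derivative norms in $J_2$), and finally convert continuous norms to training errors through the quadrature bound \eqref{quadrature}. The only difference is cosmetic bookkeeping — you compare $u_m^*$ to the exact solution with source $L\vec p_*$, whereas the paper decomposes $\hat u_m=u_m^*-u_{m,ex}$ into $\hat u_{m,11}+\hat u_{m,12}+\hat u_{m,2}$ — and both organizations yield the same collection of terms.
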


\begin{proof}
First, we will derive the estimates on the first formulation of \eqref{gener1}.
Introduce $\hat u_e:=u_e^*-u_{e,ex}$, where $u_e^*=u_{e,\theta_e^*}$ and $\hat  u_e(x,t)$ satisfies
\begin{equation*}
\begin{cases}
\begin{aligned}
\left(c^{-1}\partial_t +\A \right) \hat  u_e(x,t)&=\mathcal{R}_{{int},\theta_e^*},  &&(x,t)\in\Omega\times (0,T),\\
\hat u_e(x,0)&=\mathcal{R}_{tb,\theta_e^*}(x), &&x\in\Omega,\\
\mathcal B \hat  u_e(x,t)&=\mathcal{R}_{sb,\theta_e^*}(x, t), && (x,t)\in\partial\Omega\times(0,T).
\end{aligned}
\end{cases}
\end{equation*}
Using the regularity estimates in \cite{Evans}, there holds
\begin{equation}\label{regula-ue}
\begin{aligned}
\| \hat u_e\|_{L^\infty(0,T;L^{2}(\Omega))}
=& \|u_e^*-u_{e,ex}\|_{L^\infty(0,T;L^{2}(\Omega))} \\
\leq& C \| \mathcal{R}_{{int},\theta_e^*}\|_{L^2(0,T;L^{2}(\Omega))}
+ \| \mathcal{R}_{tb,\theta_e^*}\|_{L^2(\Omega)}
+\|\mathcal{R}_{sb,\theta_e^*}\|_{L^2(0,T;H^{-3/2}(\partial\Omega))}.
\end{aligned}
\end{equation}

Then, we will derive the estimates on the second formulation of \eqref{gener1}.
Introducing $\hat u_m:=u_m^*-u_{m,ex}$ with $$u_m^*=u_{m,\theta_m^*}=u_{m,\theta_m^*}[\mu_{f,\theta_f^*},u_{e,\theta_e^*}]
=u^*_{m}[\mu^*_{f},u^*_{e}],$$ we have the equation
\begin{equation*}\label{sensitivitypro}
\begin{cases}
\begin{aligned}
\left(c^{-1}\partial_t +\A \right) \hat  u_m(x,t)&=\mu^*_{f}u^*_{e}-\mu_{f,ex}
 u_{e,ex}+\mathcal{R}_{{int},\theta_m^*,\theta_f^*,\theta_e^*},  &&(x,t)\in\Omega\times (0,T),\\
\hat  u_m(x,0)&=\mathcal{R}_{tb,\theta_m^*}(x), &&x\in\Omega,\\
\mathcal B \hat  u_m(x,t)&=\mathcal{R}_{sb,\theta_m^*}(x, t), && (x,t)\in\partial\Omega\times(0,T),
\end{aligned}
\end{cases}
\end{equation*}
and the observation data
\begin{eqnarray*}\label{sensitivityObs}
\frac{\partial \hat u_m}{\n}=\frac{\partial u_{m,\theta_m^*}}{\n}-\frac{\partial u_{m,ex}}{\n}=\mathcal{R}_{d, \theta_m^*}(x,t)+\varphi^\delta-\varphi, \quad  (x,t) \in \Gamma\times(0,T].
\end{eqnarray*}
We make the decomposition $\hat u_m:=\hat u_{m,1}+\hat u_{m,2}$, where $\hat u_{m,1}$, $\hat u_{m,2}$ satisfy
\begin{equation*}\label{sensitivitypro1}
\begin{cases}
\begin{aligned}
\left(c^{-1}\partial_t +\A \right) \hat u_{m,1}(x,t)&=\mu^*_{f}u^*_{e}-\mu_{f,ex}
 u_{e,ex},  &&(x,t)\in\Omega\times (0,T),\\
\hat u_{m,1}(x,0)&=0, &&x\in\Omega,\\
\mathcal B \hat u_{m,1}(x,t)&=0, && (x,t)\in\partial\Omega\times(0,T),
\end{aligned}
\end{cases}
\end{equation*}
with
\begin{eqnarray*}\label{sensitivityObs1}
\frac{\partial \hat u_{m,1}}{\n}=\frac{\partial \hat u_m}{\n}-\frac{\partial \hat u_{m,2}}{\n}=\mathcal{R}_{d, \theta_m^*}(x,t)+(\varphi^\delta-\varphi)-\frac{\partial \hat u_{m,2}}{\n}, \quad  (x,t) \in \Gamma\times(0,T],
\end{eqnarray*}
and
\begin{equation*}\label{sensitivitypro2}
\begin{cases}
\begin{aligned}
\left(c^{-1}\partial_t +\A \right) \hat u_{m,2}(x,t)&=\mathcal{R}_{{int},\theta_m^*,\theta_f^*,\theta_e^*},  &&(x,t)\in\Omega\times (0,T),\\
\hat u_{m,2}(x,0)&=\mathcal{R}_{tb,\theta_m^*}(x), &&x\in\Omega,\\
\mathcal B \hat u_{m,2}(x,t)&=\mathcal{R}_{sb,\theta_m^*}(x, t), && (x,t)\in\partial\Omega\times(0,T),
\end{aligned}
\end{cases}
\end{equation*}
respectively.

Next we split $\hat u_{m,1}=\hat u_{m,11}+\hat u_{m,12}$, where $\hat u_{m,11}$, $\hat u_{m,12}$ satisfy
\begin{equation}\label{sensitivitypro1-1}
\begin{cases}
\begin{aligned}
\left(c^{-1}\partial_t +\A \right) \hat u_{m,11}(x,t)&=(\mu^*_{f}-\mu_{f,ex}
 )u_{e,ex},  &&(x,t)\in\Omega\times (0,T),\\
\hat u_{m,11}(x,0)&=0, &&x\in\Omega,\\
\mathcal B \hat u_{m,11}(x,t)&=0, && (x,t)\in\partial\Omega\times(0,T),
\end{aligned}
\end{cases}
\end{equation}
with
\begin{equation}\label{sensitivityObs1-1}
\begin{aligned}
\frac{\partial \hat u_{m,11}}{\n}
&=\frac{\partial \hat u_{m,1}}{\n}-\frac{\partial \hat u_{m,12}}{\n} \\
&=\mathcal{R}_{d, \theta_m^*}(x,t)+(\varphi^\delta-\varphi)-\frac{\partial \hat u_{m,2}}{\n}-\frac{\partial \hat u_{m,12}}{\n},\; (x,t) \in \Gamma\times(0,T],
\end{aligned}
\end{equation}
and
\begin{equation*}\label{sensitivitypro2-1}
\begin{cases}
\begin{aligned}
\left(c^{-1}\partial_t +\A \right) \hat u_{m,12}(x,t)&=\mu^*_{f}(u^*_{e}-u_{e,ex}),  &&(x,t)\in\Omega\times (0,T),\\
\hat u_{m,12}(x,0)&=0, &&x\in\Omega,\\
\mathcal B \hat u_{m,12}(x,t)&=0, && (x,t)\in\partial\Omega\times(0,T).
\end{aligned}
\end{cases}
\end{equation*}
For the inverse problem \eqref{sensitivitypro1-1}-\eqref{sensitivityObs1-1},
according to conditional stability result, i.e, Theorem \ref{stability-inv}, we get
\begin{align}\label{stability-use}
\| {\vec p_*-\vec p_{ex}}\|_{\mathcal{L}}
\leq& C\Big\|\frac{\partial \hat u_{m,11}}{\n}\Big\|_{L^2(\Gamma\times(0,T))} \nonumber \\
=& C \Big\| \mathcal{R}_{d, \theta_m^*}(x,t)+(\varphi^\delta-\varphi)-\frac{\partial \hat u_{m,2}}{\n}-\frac{\partial \hat u_{m,12}}{\n}\Big\|_{L^2(\Gamma\times(0,T))} \nonumber\\
\leq& \| \mathcal{R}_{d, \theta_m^*}\|_{L^2(\Gamma\times(0,T))}
+ \| \varphi^\delta-\varphi\|_{L^2(\Gamma\times(0,T))}
+ \Big\| \frac{\partial \hat u_{m,2}}{\n}\Big\|_{L^2(\Gamma\times(0,T))}
+ \Big\| \frac{\partial \hat u_{m,12}}{\n} \Big\|_{L^2(\Gamma\times(0,T))} \nonumber\\
\leq& \| \mathcal{R}_{d, \theta_m^*}\|_{L^2(\Gamma\times(0,T))}
+ \Big\| \frac{\partial \hat u_{m,2}}{\n}\Big\|_{L^2(\partial\Omega\times(0,T))}
+ \Big\| \frac{\partial \hat u_{m,12}}{\n} \Big\|_{L^2(\partial\Omega\times(0,T))} + \delta.
\end{align}
Using trace theorem and Sobolev embedding theorem, we further get
\begin{align}\label{stability-use-1}
\| {\vec p_*-\vec p_{ex}}\|_{\mathcal{L}}
\leq&\| \mathcal{R}_{d, \theta_m^*}\|_{L^2(\Gamma\times(0,T))}
+ \| \hat u_{m,2} \|_{L^2(0,T;H^{3/2}(\Omega))}
+ \| \hat u_{m,12} \|_{L^2(0,T;H^{3/2}(\Omega))}+ \delta \nonumber\\
\leq& \| \mathcal{R}_{d, \theta_m^*}\|_{L^2(\Gamma\times(0,T))}
+ \| \hat u_{m,2} \|_{L^2(0,T;H^{2}(\Omega))}
+ \| \hat u_{m,12} \|_{L^2(0,T;H^{2}(\Omega))}+ \delta.
\end{align}
According to the estimates in \cite{Evans}, the second term of the inequality \eqref{stability-use-1} satisfies
\begin{align}\label{stability-use-2}
\| \hat u_{m,2} \|_{L^2(0,T;H^{2}(\Omega))}
\leq& C\Big( \| \mathcal{R}_{{int},\theta_m^*,\theta_f^*,\theta_e^*}\|_{L^2(0,T;L^{2}(\Omega))} +\| \mathcal{R}_{tb,\theta_m^*}\|_{H_0^1(\Omega)}
 \nonumber \\
&\ \quad +\|\mathcal{R}_{sb,\theta_m^*}\|_{L^2(0,T;{H^{1/2}(\partial\Omega)})}
+ \|\partial_t \mathcal{R}_{sb,\theta_m^*}\|_{L^2(0,T;{H^{1/2}(\partial\Omega)})}\Big )\nonumber\\
\leq& C \Big( \| \mathcal{R}_{{int},\theta_m^*,\theta_f^*,\theta_e^*}\|_{L^2(0,T;L^{2}(\Omega))}
+\| \mathcal{R}_{tb,\theta_m^*}\|_{H_0^1(\Omega)}
+ \|\mathcal{R}_{sb,\theta_m^*}\|_{H^1(0,T;{H^{1}(\partial\Omega)})}\Big).
\end{align}
Using \eqref{regula-ue}, the third term in \eqref{stability-use-1} satisfies
\begin{align}\label{stability-use-3}
\| \hat u_{m,12} \|_{L^2(0,T;H^{2}(\Omega))}
\leq& C \| \mu^*_{f}(u^*_{e}-u_{e,ex})\|_{L^2(0,T;L^{2}(\Omega))}
\nonumber \\
\leq& C \| (u^*_{e}-u_{e,ex})\|_{L^\infty(0,T;L^{2}(\Omega))}
\nonumber \\
\leq& C \Big( \| \mathcal{R}_{{int},\theta_e^*}\|_{L^2(0,T;L^{2}(\Omega))}
+\| \mathcal{R}_{tb,\theta_e^*}\|_{L^2(\Omega)}
+ \| \mathcal{R}_{sb,\theta_e^*}\|_{L^2(0,T;H^{-3/2}(\partial\Omega))}\Big)
\nonumber \\
\leq& C \Big( \| \mathcal{R}_{{int},\theta_e^*}\|_{L^2(0,T;L^{2}(\Omega))}
+ \| \mathcal{R}_{tb,\theta_e^*}\|_{L^2(\Omega)}
+ \| \mathcal{R}_{sb,\theta_e^*}\|_{L^2(0,T;L^{2}(\partial\Omega))}\Big).
\end{align}
Combining \eqref{stability-use}-\eqref{stability-use-3} together and using the Sobolev embedding theorem, we get the generalization error estimate for $\vec {p}$
\begin{align}\label{stability-use-final-1}
\| {\vec p_*-\vec p_{ex}}\|_{\mathcal{L}}\leq& C \Big(
\| \mathcal{R}_{{int},\theta_e^*}\|_{L^2(0,T;L^{2}(\Omega))}
+ \| \mathcal{R}_{tb,\theta_e^*}\|_{L^2(\Omega)}
+ \|\mathcal{R}_{sb,\theta_e^*}\|_{L^2(0,T;L^2(\partial\Omega))}+\delta
 \nonumber \\
&\quad+\| \mathcal{R}_{d, \theta_m^*}\|_{L^2(\Gamma\times(0,T))}
+\|\mathcal{R}_{{int},\theta_m^*,\theta_f^*,\theta_e^*}\|_{L^2(0,T;L^{2}(\Omega))}
+\| \mathcal{R}_{tb,\theta_m^*}\|_{H_0^1(\Omega)} \nonumber \\
&\quad+\|\mathcal{R}_{sb,\theta_m^*}\|_{H^1(0,T;{H^{1}(\partial\Omega)})}\Big).
\end{align}

The formulations in \eqref{IPue} are generated by applying \eqref{regula-ue} and \eqref{stability-use-final-1}, with the quadrature rules \eqref{quadrature} respectively. The proof is complete.
\end{proof}

The above estimates include the difference between the underlying solution and the approximation solution produced by data-driven method of the inverse problem, which represent the approximation of the problem in a finite dimensional space, and essentially reflect the stability due to both the model itself and the reconstruction scheme. Therefore, based on the result of Theorem \ref{theorem-error}, we can conduct the reconstruction algorithms both for excitation process to solve the direct problem and emission process to solve the inverse problem. 
 
\begin{remark}
In this work, the attempt to evaluate the generalization error estimate for unknown $u_m$ employing the generalization error \eqref{stability-use-final-1} is frustrated. This is because the norm equality between $\|\vec p\|_{\mathcal{L}}$ and $\|L \vec p \|_{L^2(0,T;L^{2}(\Omega))}$ is not proven. In the future, we will explore the suitable norm on $\vec p$ and give the generalization error estimate for $u_m$.
\end{remark}

\section{Numerical inversions.}\label{sec_num}

For actual implementation, we firstly parameterize $u_e$ by deep neural networks $u_{e,\theta_e}$  with network parameters $\theta_e$. Minimizing the loss function \eqref{loss-ue} to search for the optimal parameters $\theta^*_e$. Then, for the obtained $u_e^*:=u_{e,\theta_e^{*}}$, we will consider to recover the solution $(\mu_f, u_m)$ for inverse problems.
We also parameterize $\mu_f$ and $u_m$ by deep neural networks $\mu_{f,\theta_f}$ and $u_{m,\theta_m}$ with network parameters $\theta_f$ and $\theta_m$, respectively. Minimizing the loss function \eqref{loss-um} to search for the optimal parameters $\theta_f^*$ and $\theta_m^*$, the corresponding $\mu_f^* = \mu_{f,\theta_f^*}$ and $u_m^* = u_{m,\theta_m^*}$ are the approximate solution of the inverse problem. We construct the next Algorithms \ref{alg0} and \ref{alg1} to solve the direct problem \eqref{excitation-Ue} and the inverse problem \eqref{IP} respectively. 

\begin{algorithm}[H]
\caption{
  Data-driven solution of the direct problem \eqref{excitation-Ue}.}
\label{alg0}
\begin{algorithmic}
\REQUIRE
 Boundary input $g(x,t)$ for the excitation process \eqref{excitation-Ue}.

\STATE {\bf Initialize}
Network architectures $\ u_{e,\theta_e}$ and parameters $\theta_e$.
\FOR {$j=1,\cdots,K_1$}
 \STATE
Sample
$\ \mathcal{S}_{int},\ \mathcal{S}_{sb},\ \mathcal{S}_{tb}$.
\STATE
$\theta_e  \leftarrow \operatorname{Adam}\left(-\nabla_{\theta_e} J_1^N(\theta_e), \tau_{\theta_e}\right),$

\ENDFOR
\ENSURE
$ u_{e,\theta_e^*}$.
\end{algorithmic}
\end{algorithm}

\begin{algorithm}[H]
\caption{
  Data-driven solution of the inverse problem \eqref{IP}.}
\label{alg1}
\begin{algorithmic}
\REQUIRE
 The obtained $u_{e,\theta_e^*}$ and noisy measurement data $\varphi^\delta$ for the inverse problem.

\STATE {\bf Initialize}
Network architectures $\left(\mu_{f,\theta_f}, u_{m,\theta_m}\right)$ and parameters $(\theta_f,\theta_m)$.
\FOR {$j=1,\cdots,K_2$}
 \STATE
Sample
$\mathcal{S}_{d},\ \mathcal{S}_{int},\ \mathcal{S}_{sb},\ \mathcal{S}_{tb}$.
\STATE
$\theta_f \leftarrow \operatorname{Adam}\left(-\nabla_{\theta_f}  J_2^N(\theta_f, \theta_m), \tau_{\theta_f}, \lambda\right)$,\\
$\theta_m  \leftarrow \operatorname{Adam}\left(-\nabla_{\theta_m} J_2^N(\theta_f, \theta_m), \tau_{\theta_m}, \lambda\right),$

\ENDFOR
\ENSURE
$\left(\mu_{f,\theta_f^*}, u_{m,\theta_m^*}\right)$.
\end{algorithmic}
\end{algorithm}

The above minimization problems are to search the minimizer of the possibly non-convex function
 $J_1^N(\theta_e)$ and $J_2^N(\theta_f,\theta_m)$ over $\Theta\subset \mathbb{R}^\mathcal{M}$ for possibly very large $\mathcal{M}$.
 The hyper-parameters $( \tau_{\theta_e},\tau_{\theta_f},\tau_{\theta_m})$ are learning rates and $\lambda$
 are balance hyper-parameters between PDE and measurement data residuals. The optimizer is Adam (Adaptive Moment Estimation), which is an optimization algorithm commonly used in deep learning for training neural networks.
The robust analysis for hyper-parameters $\lambda$ will be studied in the numerical implementation subsection.

\paragraph{Example 1:}
The boundary input of excitation process is set as
$$ \mathcal B u_e = -20tx(x-1), \;\;(x,y,t)\in  \partial\Omega\times(0,T),$$
where the diffusion domain is $\Omega=(0,1)^2$, the final time is $T=1$ and the exact source $\mu_f$ of emission system is given as
\begin{eqnarray*}\label{eg1}
\mu_f(x,y,t)= 5 + t + \cos(\pi x)\cos(\pi y),
\end{eqnarray*}
which is smooth on the whole time and space domain $\Omega\times(0,T)$. The exact measurement will be
\begin{eqnarray*}\label{SS2}
\varphi(x,y,t)=\frac{\partial u_m}{\n}\Big|_{\Gamma \times(0,T)},
\end{eqnarray*}
with $\Gamma\subset\partial\Omega$, and in our experiments the noisy data is set as
\begin{eqnarray}\label{random}
\varphi^\delta(x,y,t):=\varphi(x,y,t)+\delta\cdot (2\; \text{rand}(\text{shape}(\varphi(x,y,t)))-1),
\end{eqnarray}
where $\text{rand}(\text{shape}(\varphi))$ is a random variable generated by uniform distribution in $[0,1]$.

For the implementation details, we use a fully connected neural network for $u_{e,\theta_e}$ with 3 hidden layers, each layer with a width of 20. We take
$\text{N}=N_{int}+N_{sb}+N_{tb}=256+256\times4+256=1536$ as the number of collocation points, which are randomly sampled in three different domains, i.e., interior spatio-temporal domain, spatial and temporal boundary domain. The collocation points are selected with respect to uniform distributions. The activation function is $tanh$. The number of training epochs is set to be $5\times10^4$, and the initial learning rates start with $0.001$ and shrink $10$ times every $2\times10^4$ iteration.
For the inversion of emission process, we also use the fully connected neural network both for $\mu_{f,\theta_f}$ and $u_{m,\theta_m}$ with 3 hidden layers, each layer with a width of 20. The number of training points is $\text{N}=N_{int}+N_{sb}+N_{tb}+N_{d}=500+500\times4+500+500=3500$, which are randomly sampled in four different domains, i.e., interior spatio-temporal domain, spatial and temporal boundary domain, and measurement domain. The collocation points are selected with respect to uniform distributions. The activation functions for $\mu_{f,\theta_f}$ and $u_{m,\theta_m}$ are both $tanh$, and the hyper-parameter is $\lambda=100$. The number of training epochs is set to be $2\times10^4$, and the initial learning rates start with $0.001$ and shrink $10$ times every $2000$ iteration.
The test sets are chosen by a uniform mesh
\begin{eqnarray}\label{testset}
\mathcal{T}:=\{(t_k,x_i,y_j):  k,i,j=0,1,\cdots,49\}\subset \Omega_T.
\end{eqnarray}

\begin{figure}[h!]
\centering
\includegraphics[width=1.1\textwidth,height=0.45\textheight,center]{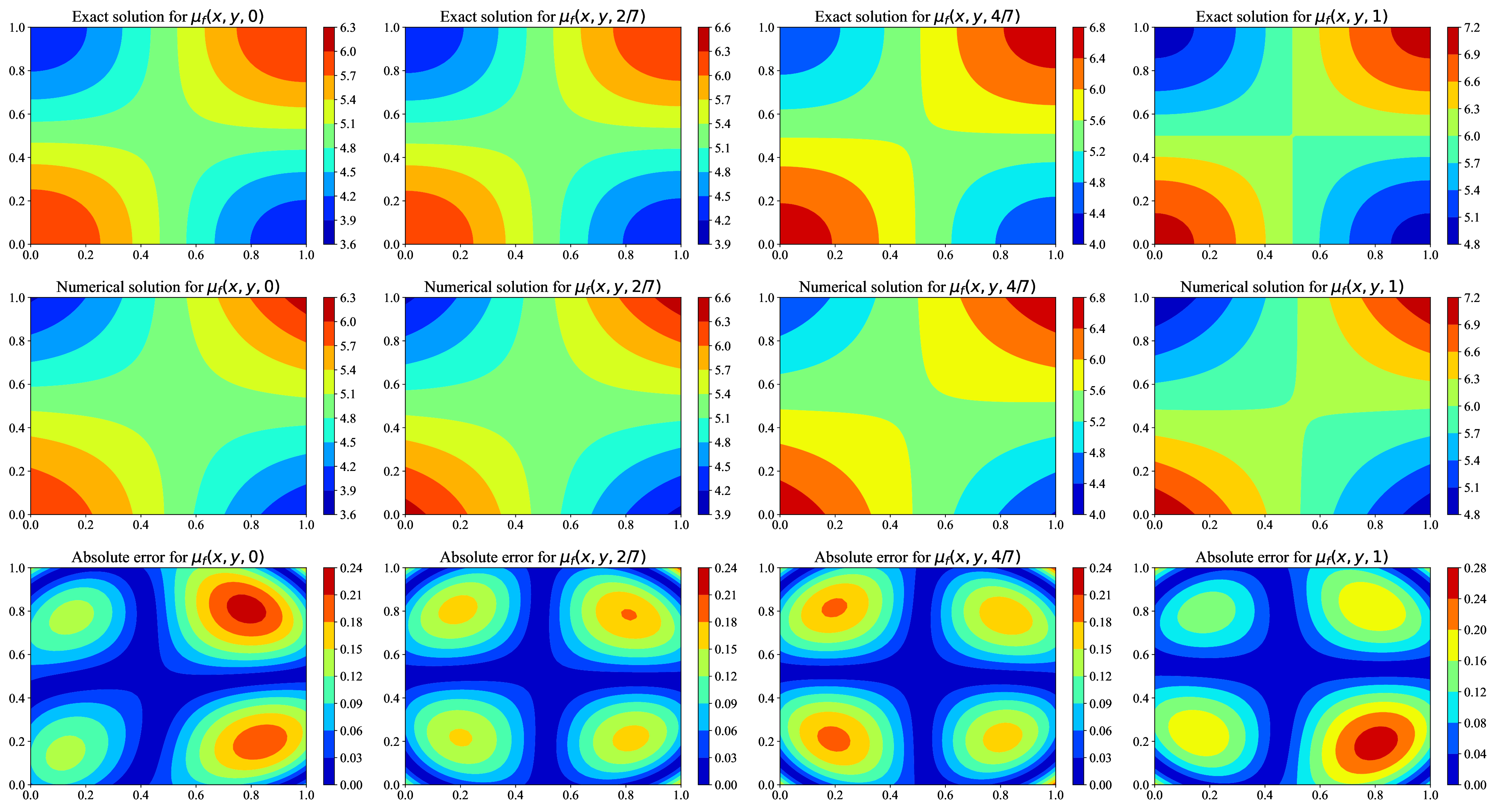}
\caption
{The exact (upper), reconstruction results (middle) and corresponding absolute pointwise errors (bottom)  for absorption coefficient $\mu_f$ at different times with noisy level $\delta=0.01$.}\label{PINN-q-exact-2D-rev1}
\end{figure}

\begin{figure}[h!]
\centering
\includegraphics[width=1.1\textwidth,height=0.3\textheight,center]{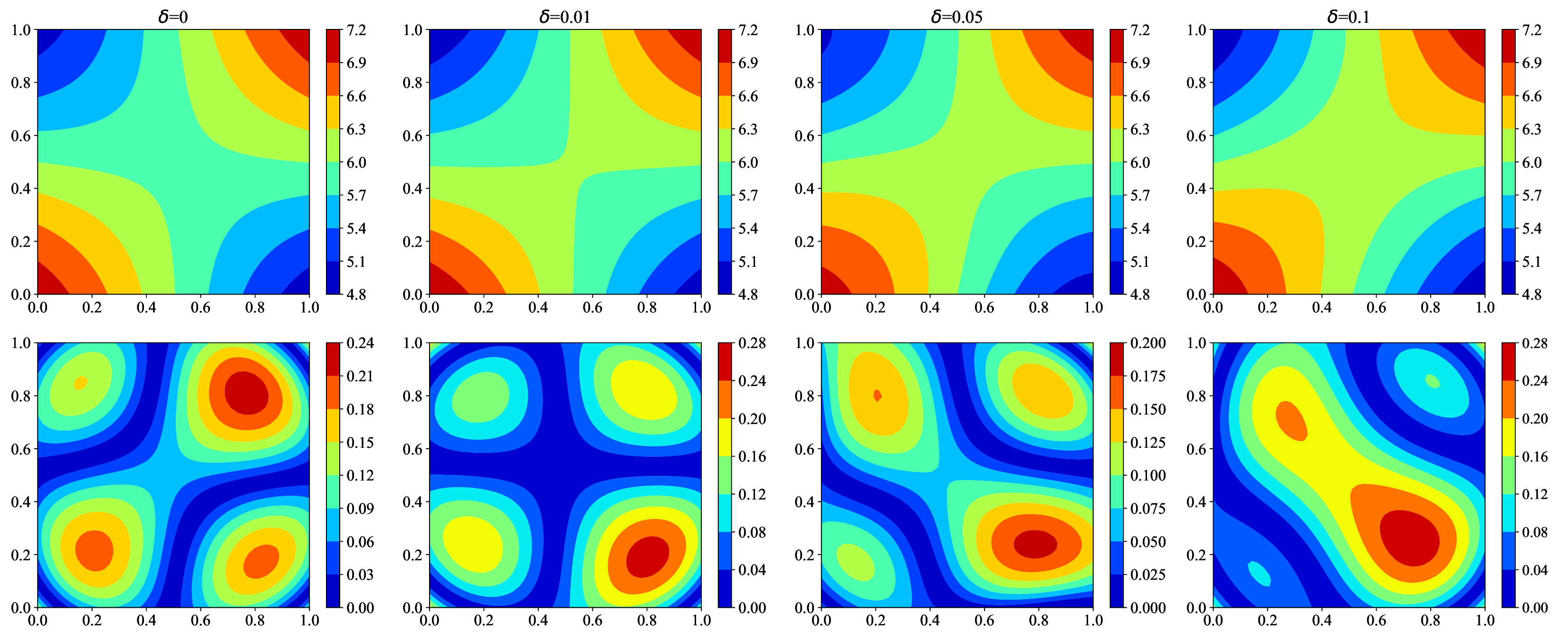}
\caption
{The reconstruction results (upper line) and corresponding absolute pointwise errors  (bottom line) for absorption coefficient $\mu_f(x,y,1)$ with different noisy levels $\delta=0,\; 0.01, \;0.05,\; 0.1$.}\label{PINN-2D-muf-rev1}
\end{figure}

Figure \ref{PINN-q-exact-2D-rev1} shows the exact (upper line), the reconstruction results (middle line) for absorption coefficient $\mu_f$ and the corresponding absolute pointwise error (bottom line) at different times $t=0,\; 2/7, \;4/7, \; 1$, respectively, with the noisy level $\delta=0.01$. We can find that the reconstruction results for  $\mu_f$ is well even with high noise of measurement data. 
Figure \ref{PINN-2D-muf-rev1} shows the reconstruction results (upper line) and the corresponding absolute pointwise error (bottom line) for $\mu_f$ at fixed time $t=1$ with different noisy level $\delta=0,\; 0.01, \; 0.05, \; 0.1$, respectively.
Meanwhile, the numerical solution  for excitation process $u_e$ (upper line) are shown in Figure \ref{PINN-2D-ue-um-rev1}, which also presents the reconstruction of emission solution $u_m$ at different times $t=2/7$, $t=3/7$, $t=5/7$ and $t=1$, respectively, with various noisy levels $\delta=0$ (second line), $\delta=0.01$ (third line) and $\delta=0.1$ (bottom line). We can see the evolution of the excitation solution for $u_e$ over time, and the reconstruction accuracy for $u_m$ deteriorates as the noise level increasing, but the performance is still satisfactory.

\begin{figure}[h!]
\centering
\includegraphics[width=1.1\textwidth,height=0.6\textheight,center]{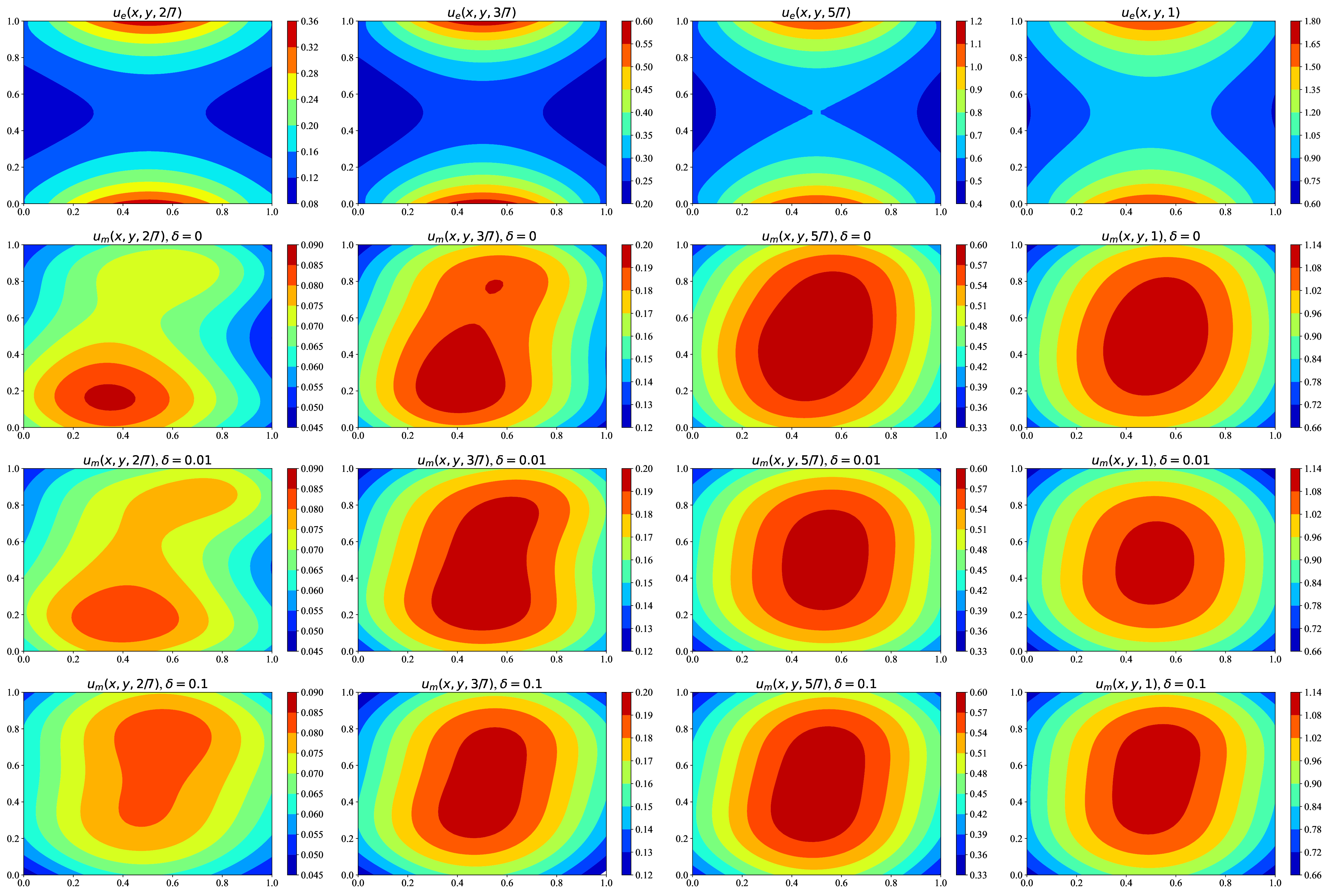}
\caption
{The numerical results for $u_e$ and $u_m$ at different times $t=2/7, \;3/7, \; 5/7,\; 1$ with various noisy levels $\delta=0,\; 0.01,\; 0.1$.}\label{PINN-2D-ue-um-rev1}
\end{figure}

\paragraph{Example 2:}
The boundary input of excitation process is set as
$$ \mathcal B u_e = -20tx(x-1), \;\;(x,y,t)\in  \partial\Omega\times[0,T],$$
where the diffusion domain is $\Omega=(0,1)^2$, the final time is $T=1$ and the exact source $\mu_f$ of emission system is given as
\begin{eqnarray*}\label{eg2}
\mu_f(x,y,t)= (t+1)f(r)
\end{eqnarray*}
with
\begin{equation*}
 \begin{aligned}
  f(r)&=
\begin{cases}
15\left(\cos r-\sqrt{3}/2\right)+2, \quad 0\leq r \leq \pi/6, \\
2, \quad \text { otherwise },
\end{cases}\\
r(x,y)&=\sqrt{(x-0.5)^2+(y-0.5)^2},
 \end{aligned}
\end{equation*}
which is continuous but not smooth in spatial domain. The noisy measurement data is generated by the same way in \eqref{random}.

For the implementation details of excitation process, the architectures of the neural networks, the training points sampling and the test sets are the same as Example 1 to get $u_{e,\theta_e}$.
For the inversion of emission process, we also use the fully connected neural network both for $\mu_{f,\theta_f}$ and $u_{m,\theta_m}$ with 3 hidden layers, each layer with a width of 20. The number of training points is $\text{N}=N_{int}+N_{sb}+N_{tb}+N_{d}=500+500\times4+500+500=3500$, which are randomly sampled in four different domains, i.e., interior spatio-temporal domain, spatial and temporal boundary domain, and measurement domain. The collocation points are selected with respect to uniform distributions. The activation functions for $\mu_{f,\theta_f}$ and $u_{m,\theta_m}$ are both $tanh$, and the hyper-parameter is $\lambda=100$. The number of training epochs is set to be $4\times10^4$, and the initial learning rates start with $0.002$ and shrink $10$ times every $2\times10^4$ iteration.
The test sets are chosen by a uniform mesh
\eqref{testset}.

Since the noisy level of the measurement data affects the reconstruction accuracy, in this simulation, we test the training performance for various noisy levels. Figure \ref{PINN_Training-2D-1} records the training process, i.e., the training loss, the relative error for the reconstruction of $\mu_f$ with respect to the iterations for different noise levels $\delta=0,\;0.1\%,\;1\%,\;10\%$ by the proposed scheme.

\begin{figure}[h!]
\centering
\includegraphics[width=0.8\textwidth,height=0.2\textheight]{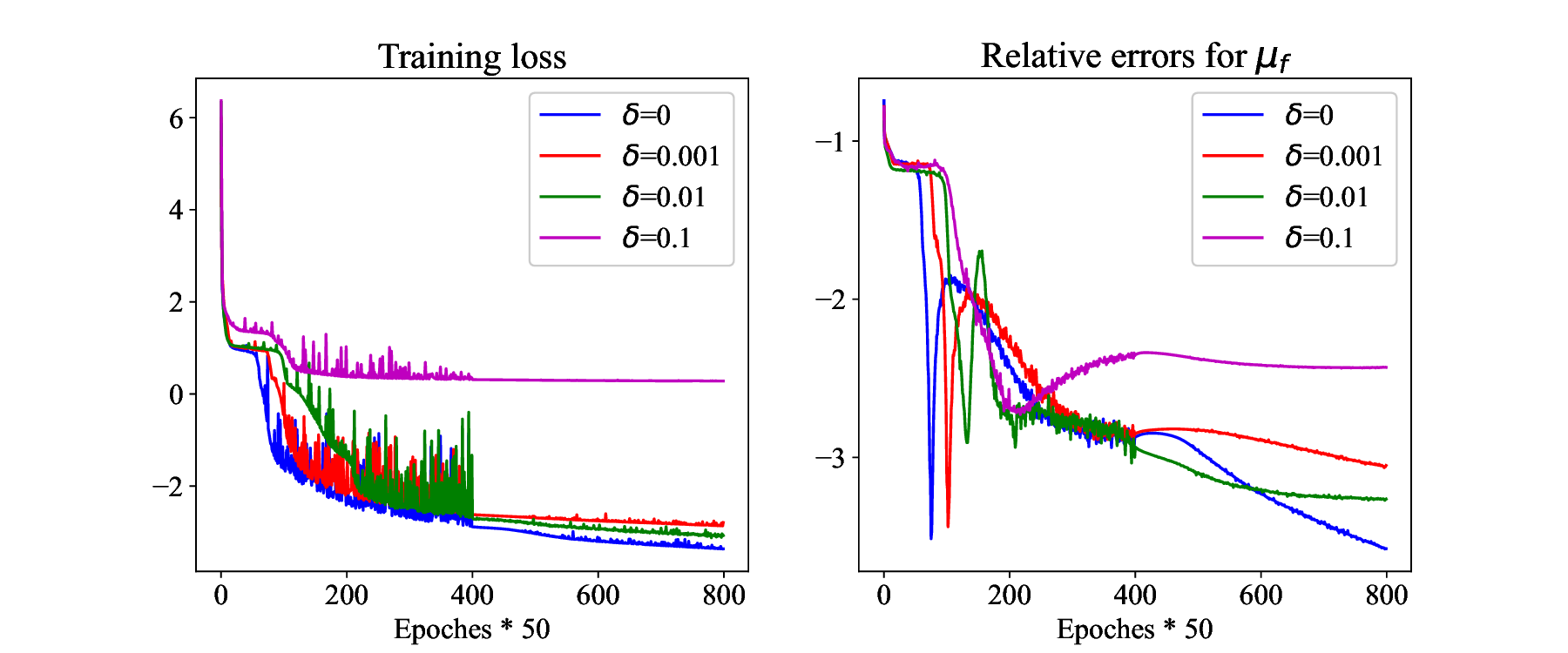}
\caption
{The training loss (left) and the relative error for $\mu_f$ (right) after logarithmic re-scaling.
}\label{PINN_Training-2D-1}
\end{figure}

The distribution of the absorption coefficient $\mu_f(x,t)$ also depends on the time $t$, Figure \ref{PINN-time-2D-1} shows the time-series relative error of the recovered $\mu_f$ with various noise levels. As shown in this figure, the training performance deteriorates as the noise level of the measurement data increasing.
\begin{figure}[h!]
\centering
\includegraphics[width=0.3\textwidth,height=0.17\textheight]{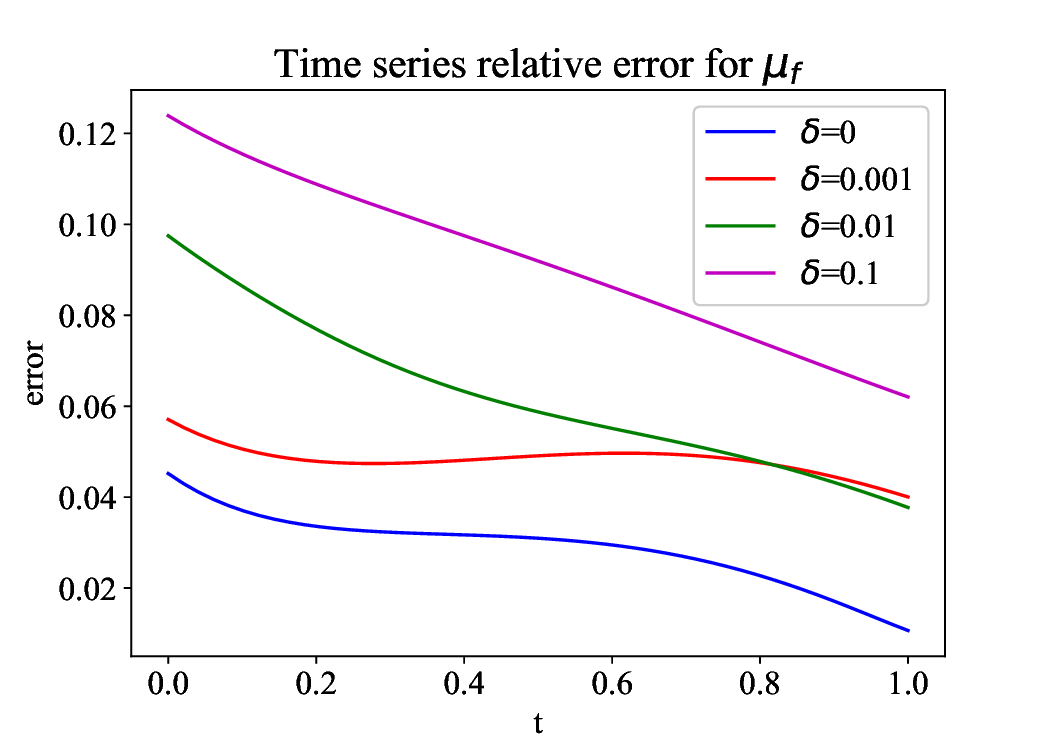}
\caption{The time series relative error (test) for $\mu_f$ after logarithmic re-scaling  for different noisy data.}\label{PINN-time-2D-1}
\end{figure}

 Figure \ref{PINN-q-exact-2D-1} shows the exact absorption coefficient at final time $t=1$. Figure \ref{PINN-q-2D-1} shows the reconstruction results for $\mu_f$ at final time $t=1$ by optimizing the proposed loss function (upper line) and the corresponding absolute pointwise error (bottom line) for various noisy levels $\delta=0, \; 0.1\%,\; 1\%,\; 10\%$. Meanwhile, the numerical solution $u_e$ at different times for excitation process are shown in Figure \ref{PINN-ue-exact-2D-1}. Figure \ref{PINN-um-2D-T} presents the reconstruction of emission solution $u_m$ at different times $t=2/7$, $t=3/7$ and $t=1$, respectively, with various noisy levels. We can see that the reconstruction accuracy deteriorates as the noise level of the measurement data increasing, but the performance is still satisfactory.

\begin{figure}[h!]
\centering
\includegraphics[width=0.38\textwidth,height=0.2\textheight]{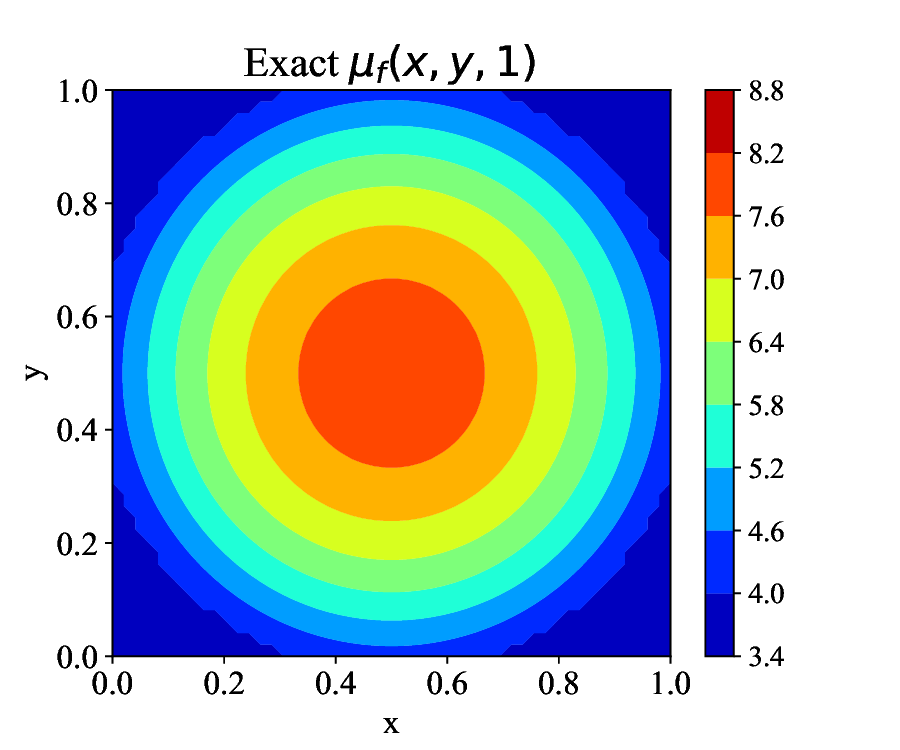}
\caption
{The exact absorption coefficient $\mu_f$ at final time $t=1$.}\label{PINN-q-exact-2D-1}
\end{figure}

\begin{figure}[h!]
\centering
\includegraphics[width=1.1\textwidth,height=0.3\textheight,center]{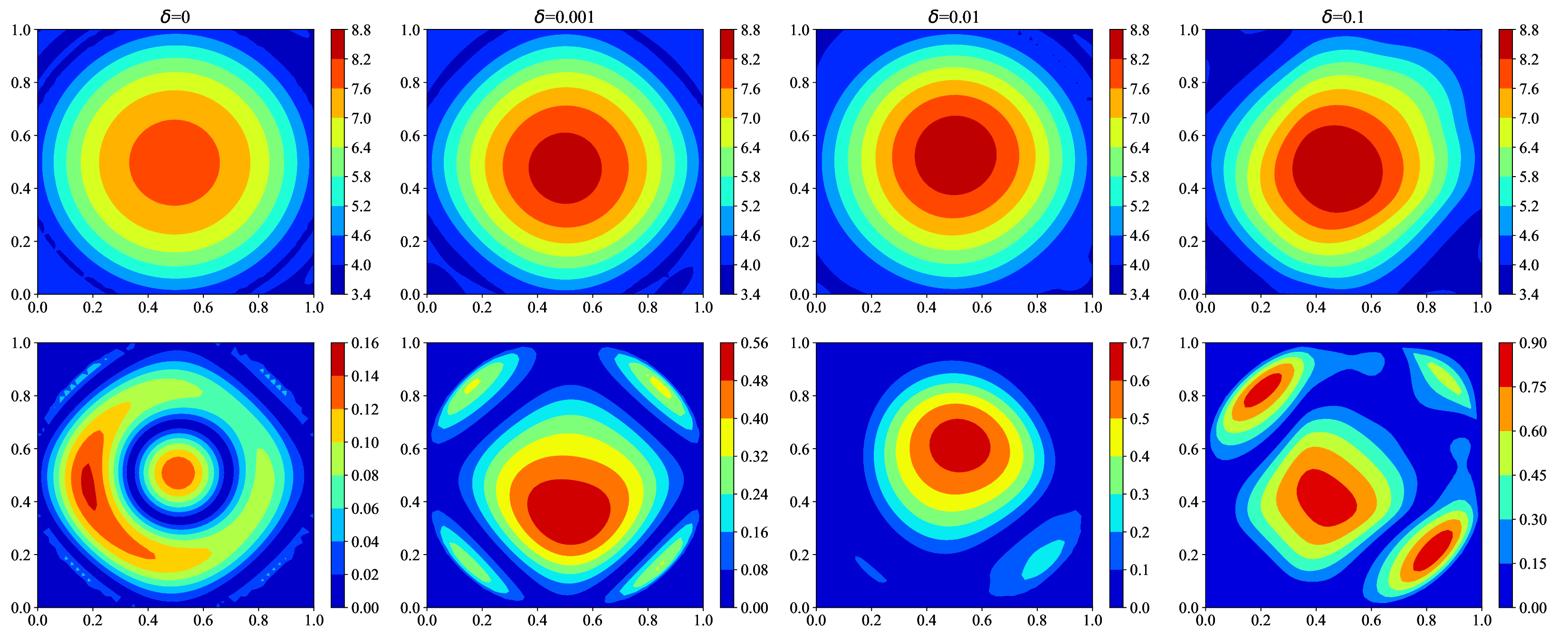}
\caption{The reconstruction of absorption coefficient $\mu_f$ (upper line) and corresponding absolute point-wise error $|\mu_f-\mu_f^*|$ (bottom line) for various noisy level measurement data at final time $t=1$.}\label{PINN-q-2D-1}
\end{figure}


\begin{figure}[h!]
\centering
\includegraphics[width=1\textwidth,height=0.18\textheight]{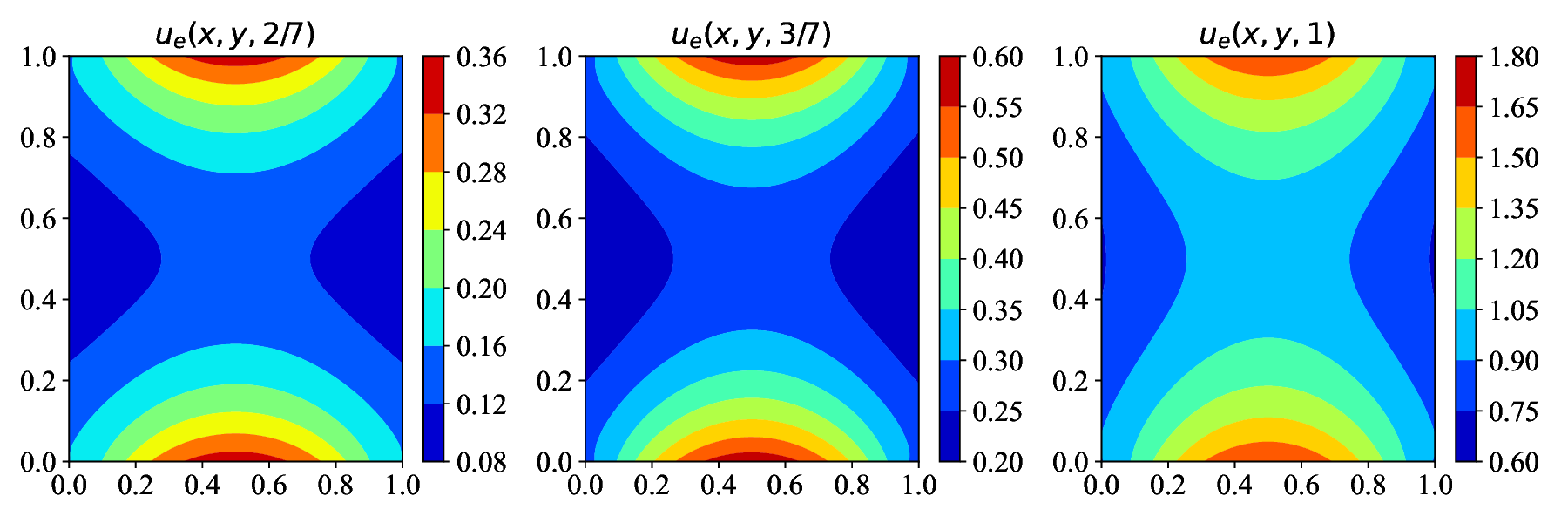}
\caption
{The numerical solution $u_e$ for excitation process at different times $t=2/7$ (left), $t=3/7$ (middle) and at final time $t=1$ (right), respectively.}\label{PINN-ue-exact-2D-1}
\end{figure}

\begin{figure}[h!]
\centering
\includegraphics[width=1.1\textwidth,height=0.47\textheight,center]{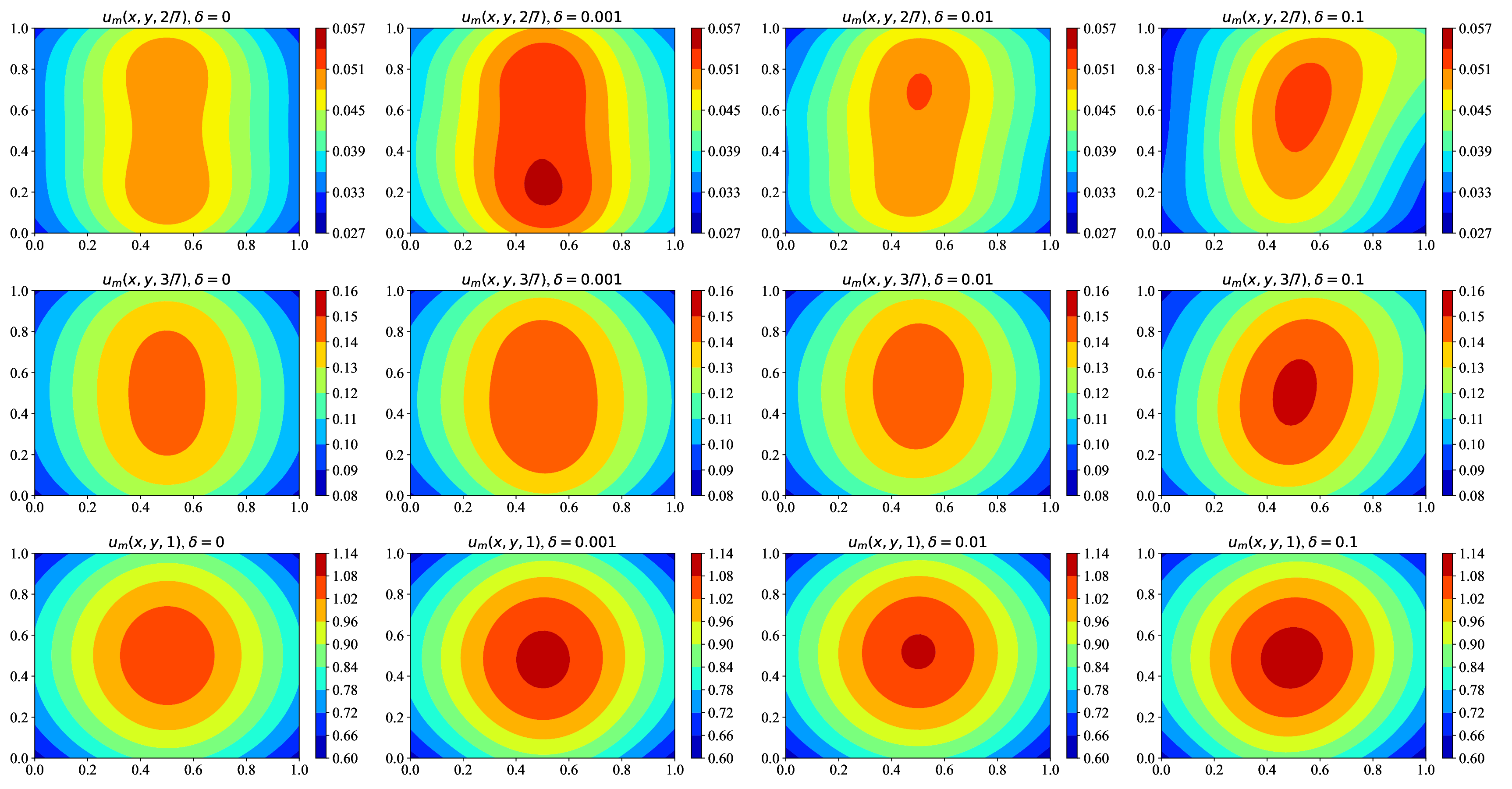}
\caption{The reconstruction of the emission solution $u_m$ at different times $t=2/7$ (upper line), $t=3/7$ (middle line) and at final time $t=1$ (bottom line), respectively, with various noisy levels $\delta=0, \; 0.1\%,\; 1\%,\; 10\%$ .}\label{PINN-um-2D-T}
\end{figure}

In order to evaluate the effectiveness of the proposed scheme in terms of hyper-parameters, some experiments are conducted. Specifically, we examine the impact of the balance hyper-parameter $\lambda$ in \eqref{loss-um}. For the fixed hidden layers ($\text{NL}=3$) and fixed neurons per-layer ($\text{NN}=20$), we compute the reconstruction errors (mean and standard deviation) for $\mu_f$ and $u_m$ with different values of $\lambda$, such as
$10000, 1000, 100, 10, 1, 0.1 $. The results of these experiments are presented in Table \ref{tab4-D-Ab1-0}, which indicate that the performance of the inverse problem is highly dependent on the balance hyper-parameter $\lambda$. Specifically, we find that the relative reconstruction errors are optimized when $\lambda$ is between $10$ to $1000$. Furthermore, we observe that the reconstruction errors increase significantly as $\lambda$ exceeds this optimal value. These results suggest that the selection of the balance hyper-parameter is critical to achieving good performance in this inverse problem.


\begin{table}[h!]
\centering
\caption{The relative errors for $\mu_f$ using different $\lambda$ ($\delta=0.01$, $\text{NL}=3$, $\text{NN}=20$).} \label{tab4-D-Ab1-0}
\begin{tabular}{cccccccccc}
\specialrule{0.05em}{13pt}{3pt}
   Error &$\lambda=10000$  &$\lambda=1000$  &$\lambda=100$  \\
\specialrule{0.05em}{3pt}{3pt}
  ${Re}_{\mu_f}$  & $12.4927\%\pm 1.9310\%$    &$7.9987\%\pm 0.5041\%$  & $8.6146\% \pm 0.1949\%$   \\
\specialrule{0.05em}{3pt}{3pt}
 Error      &$\lambda=10$  &$\lambda=1$  &$\lambda=0.1$ \\
\specialrule{0.05em}{3pt}{3pt}
  ${Re}_{\mu_f}$   & $9.7343\% \pm 0.5150\%$  &$16.1470\%\pm 1.5965\%$
 &$19.4545\% \pm 1.2732$\%   \\
\specialrule{0.05em}{3pt}{3pt}
\end{tabular}
\end{table}

\section{Conclusion and remark}

In this work, the inverse dynamic source problem by the one single boundary measurement in finite time domain is considered, which is arising from the time-domain fluorescence diffuse optical tomography (FDOT). We establish the uniqueness theorem and  the conditional stability of Lipschitz type for the inverse problem by a defined weighted norm. A deep neural network-based reconstruction scheme with a new loss function has been proposed to solve the inverse problem. Generalization error estimates based on the Lipschitz conditional stability of inverse problems have been established, which provide a measure of the stability and accuracy of the proposed method in solving the inverse problem. Some numerical experiments have been conducted, which indicate the effectiveness of the approach in this work.

\section*{Acknowledgments}
Zhidong Zhang is supported by the National Key Research and Development Plan of China (Grant No. 2023YFB3002400), and 
National Natural Science Foundation of China (Grant No. 12101627). Chunlong Sun thanks National Natural Science Foundation of China (Grant Nos.12201298, 12274224), Natural Science Foundation of Jiangsu Province, China (Grant No. BK20210269) and  "Double Innovation" Doctor of Jiangsu Province, China (Grant No.JSSCBS20220227). Mengmeng Zhang is supported by National Natural Science Foundation of China  (Grant No.12301537), Natural Science Foundation of Hebei Province (Grant No.A2023202024) and Foundation of Tianjin Education Commission Research Program (Grant No.2022KJ102).

\bibliographystyle{plainurl} 
\bibliography{ref}

\begin{thebibliography}{10}

\bibitem{agmon1959estimates}
Shmuel Agmon, Avron Douglis, and Louis Nirenberg.
\newblock Estimates near the boundary for solutions of elliptic partial
  differential equations satisfying general boundary conditions. i.
\newblock {\em Communications on pure and applied mathematics}, 12:623--727,
  1959.
\newblock URL: \url{https://doi.org/10.1002/cpa.3160120405}, \href
  {http://dx.doi.org/10.1002/cpa.3160120405}
  {\path{doi:10.1002/cpa.3160120405}}.

\bibitem{Arridge99}
Simon~R. Arridge.
\newblock Optical tomography in medical imaging.
\newblock {\em Inverse Problems}, 15(2):R41--R93, 1999.
\newblock URL: \url{https://doi.org/10.1088/0266-5611/15/2/022}, \href
  {http://dx.doi.org/10.1088/0266-5611/15/2/022}
  {\path{doi:10.1088/0266-5611/15/2/022}}.

\bibitem{Arr09}
Simon~R. Arridge and John~C. Schotland.
\newblock Optical tomography: forward and inverse problems.
\newblock {\em Inverse Problems}, 25(12):123010, 59, 2009.
\newblock URL: \url{https://doi.org/10.1088/0266-5611/25/12/123010}, \href
  {http://dx.doi.org/10.1088/0266-5611/25/12/123010}
  {\path{doi:10.1088/0266-5611/25/12/123010}}.

\bibitem{Bao2}
Gang Bao, Xiaojing Ye, Yaohua Zang, and Haomin Zhou.
\newblock Numerical solution of inverse problems by weak adversarial networks.
\newblock {\em Inverse Problems}, 36(11):115003, 31, 2020.
\newblock URL: \url{https://doi.org/10.1088/1361-6420/abb447}, \href
  {http://dx.doi.org/10.1088/1361-6420/abb447}
  {\path{doi:10.1088/1361-6420/abb447}}.

\bibitem{Johann2010}
Johann Baumeister, Barbara Kaltenbacher, and Antonio Leit\~{a}o.
\newblock On {L}evenberg-{M}arquardt-{K}aczmarz iterative methods for solving
  systems of nonlinear ill-posed equations.
\newblock {\em Inverse Problems and Imaging}, 4(3):335--350, 2010.
\newblock URL: \url{https://doi.org/10.3934/ipi.2010.4.335}, \href
  {http://dx.doi.org/10.3934/ipi.2010.4.335}
  {\path{doi:10.3934/ipi.2010.4.335}}.

\bibitem{Chen-Lu}
Yuyao Chen, Lu~Lu, George~Em Karniadakis, and Luca~Dal Negro.
\newblock Physics-informed neural networks for inverse problems in nano-optics
  and metamaterials.
\newblock {\em Optics express}, 28(8):11618--11633, 2020.
\newblock URL: \url{https://opg.optica.org/oe/abstract.cfm?URI=oe-28-8-11618},
  \href {http://dx.doi.org/10.1364/OE.384875} {\path{doi:10.1364/OE.384875}}.

\bibitem{Correia10}
Teresa Correia, Maximilian Koch, Angelique Ale, Vasilis Ntziachristos, and
  Simon Arridge.
\newblock Patch-based anisotropic diffusion scheme for fluorescence diffuse
  optical tomography - part 2: Image reconstruction.
\newblock {\em Physics in Medicine and Biology}, 61(4):1452--1475, 2016.

\bibitem{de2023error}
Tim De~Ryck, Ameya~D Jagtap, and Siddhartha Mishra.
\newblock Error estimates for physics-informed neural networks approximating
  the navier--stokes equations.
\newblock {\em IMA Journal of Numerical Analysis}, 44(1):83--119, 2024.
\newblock URL: \url{https://doi.org/10.1093/imanum/drac085}.

\bibitem{Duan_2022}
Chenguang Duan, Yuling Jiao, Yanming Lai, Dingwei Li, Xiliang Lu, and
  Jerry~Zhijian Yang.
\newblock Convergence rate analysis for deep {R}itz method.
\newblock {\em Communications in Computational Physics}, 31(4):1020--1048,
  2022.
\newblock URL: \url{https://doi.org/10.4208/cicp.oa-2021-0195}, \href
  {http://dx.doi.org/10.4208/cicp.oa-2021-0195}
  {\path{doi:10.4208/cicp.oa-2021-0195}}.

\bibitem{Dutta12}
Joyita Dutta, Sangtae Ahn, Changqing Li, Simon~R Cherry, and Richard~M Leahy.
\newblock Joint l1 and total variation regularization for fluorescence
  molecular tomography.
\newblock {\em Physics in Medicine and Biology}, 57(6):1459--1476, 2015.

\bibitem{EWN-DRM}
Weinan E and Bing Yu.
\newblock The deep {R}itz method: a deep learning-based numerical algorithm for
  solving variational problems.
\newblock {\em Communications in Mathematics and Statistics}, 6(1):1--12, 2018.
\newblock URL: \url{https://doi.org/10.1007/s40304-018-0127-z}, \href
  {http://dx.doi.org/10.1007/s40304-018-0127-z}
  {\path{doi:10.1007/s40304-018-0127-z}}.

\bibitem{Evans}
Lawrence~C. Evans.
\newblock {\em Partial differential equations}, volume~19 of {\em Graduate
  Studies in Mathematics}.
\newblock American Mathematical Society, Providence, RI, second edition, 2010.
\newblock URL: \url{https://doi.org/10.1090/gsm/019}, \href
  {http://dx.doi.org/10.1090/gsm/019} {\path{doi:10.1090/gsm/019}}.

\bibitem{hanke1997}
Martin Hanke.
\newblock A regularizing levenberg - marquardt scheme, with applications to
  inverse groundwater filtration problems.
\newblock {\em Inverse Problems}, 13(1):79--95, feb 1997.
\newblock URL: \url{https://doi.org/10.1088/0266-5611/13/1/007}, \href
  {http://dx.doi.org/10.1088/0266-5611/13/1/007}
  {\path{doi:10.1088/0266-5611/13/1/007}}.

\bibitem{Hanke_1995}
Martin Hanke, Andreas Neubauer, and Otmar Scherzer.
\newblock A convergence analysis of the landweber iteration for nonlinear
  ill-posed problems.
\newblock {\em Numerische Mathematik}, 72(1):21--37, 1995.
\newblock \href {http://dx.doi.org/10.1007/s002110050158}
  {\path{doi:10.1007/s002110050158}}.

\bibitem{Hong_2021}
Qingguo Hong, Jonathan W.~Siegel, and Jinchao Xu.
\newblock Rademacher complexity and numerical quadrature analysis of stable
  neural networks with applications to numerical pdes.
\newblock {\em arXiv preprint}.
\newblock \href {http://dx.doi.org/arXiv:2104.02903}
  {\path{doi:arXiv:2104.02903}}.

\bibitem{Jiang11}
Huabei Jiang.
\newblock {\em Diffuse optical tomography: principles and applications}.
\newblock CRC Press, Taylor $\&$ Francis Group, Boca Raton, 2011.

\bibitem{Jiao}
Yuling Jiao, Yanming Lai, Dingwei Li, Xiliang Lu, Fengru Wang, Yang Wang, and
  Jerry~Zhijian Yang.
\newblock A rate of convergence of physics informed neural networks for the
  linear second order elliptic {PDE}s.
\newblock {\em Communications in Computational Physics}, 31(4):1272--1295,
  2022.
\newblock URL: \url{https://doi.org/10.4208/cicp.oa-2021-0186}, \href
  {http://dx.doi.org/10.4208/cicp.oa-2021-0186}
  {\path{doi:10.4208/cicp.oa-2021-0186}}.

\bibitem{JinQ_2013}
Qinian Jin and Min Zhong.
\newblock On the iteratively regularized gauss-newton method in banach spaces
  with applications to parameter identification problems.
\newblock {\em Numerische Mathematik}, 124(2013):647--683, 2013.
\newblock \href {http://dx.doi.org/10.1007/s00211-013-0529-5}
  {\path{doi:10.1007/s00211-013-0529-5}}.

\bibitem{Eom24}
Goro~Nishimura Junyong~Eom, Gen~Nakamura and Chunlong Sun.
\newblock {Local analysis for locating a single point target in time-domain
  fluorescence diffuse optical tomography}.
\newblock {\em Differential and Integral Equations}, 37(1/2):27--58, 2024.
\newblock URL: \url{https://doi.org/10.57262/die037-0102-27}, \href
  {http://dx.doi.org/10.57262/die037-0102-27}
  {\path{doi:10.57262/die037-0102-27}}.

\bibitem{Kaltenbacher_2015}
B~Kaltenbacher.
\newblock A convergence rates result for an iteratively regularized
  gauss--newton--halley method in banach space.
\newblock {\em Inverse Problems}, 31(1):015007, 2015.
\newblock \href {http://dx.doi.org/10.1088/0266-5611/31/1/015007}
  {\path{doi:10.1088/0266-5611/31/1/015007}}.

\bibitem{Levenberg1944}
Kenneth Levenberg.
\newblock A method for the solution of certain non-linear problems in least
  squares.
\newblock {\em Quarterly of applied mathematics}, 2:164--168, 1944.
\newblock URL: \url{https://doi.org/10.1090/qam/10666}, \href
  {http://dx.doi.org/10.1090/qam/10666} {\path{doi:10.1090/qam/10666}}.

\bibitem{LinZhangZhang:2022}
Guang Lin, Zecheng Zhang, and Zhidong Zhang.
\newblock Theoretical and numerical studies of inverse source problem for the
  linear parabolic equation with sparse boundary measurements.
\newblock {\em Inverse Problems}, 38(12):Paper No. 125007, 28, 2022.

\bibitem{Liu:2020}
Jijun Liu, Manabu Machida, Gen Nakamura, Goro Nishimura, and Chunlong Sun.
\newblock On fluorescence imaging: The diffusion equation model and recovery of
  the absorption coefficient of fluorophores.
\newblock {\em Science China Mathematics}, 2020.
\newblock \href {http://dx.doi.org/10.1007/s11425-020-1731-y}
  {\path{doi:10.1007/s11425-020-1731-y}}.

\bibitem{Liu20}
Yan Liu, Wuwei Ren, and Habib Ammari.
\newblock Robust reconstruction of fluorescence molecular tomography with an
  optimized illumination pattern.
\newblock {\em Inverse Problems and Imaging}, 14(3):535--568, 2020.
\newblock URL: \url{https://doi.org/10.3934/ipi.2020025}, \href
  {http://dx.doi.org/10.3934/ipi.2020025} {\path{doi:10.3934/ipi.2020025}}.

\bibitem{Lu-DeepONet-2}
Lu~Lu, Pengzhan Jin, Guofei Pang, Zhongqiang Zhang, and George~Em Karniadakis.
\newblock Learning nonlinear operators via deeponet based on the universal
  approximation theorem of operators.
\newblock {\em Nature machine intelligence}, 3(3):218--229, 2021.

\bibitem{Lu-DeepXDE}
Lu~Lu, Xuhui Meng, Zhiping Mao, and George~Em Karniadakis.
\newblock Deep{XDE}: a deep learning library for solving differential
  equations.
\newblock {\em SIAM Review}, 63(1):208--228, 2021.
\newblock URL: \url{https://doi.org/10.1137/19M1274067}, \href
  {http://dx.doi.org/10.1137/19M1274067} {\path{doi:10.1137/19M1274067}}.

\bibitem{Lu_2021}
Yulong Lu, Jianfeng Lu, and Min Wang.
\newblock A priori generalization analysis of the deep ritz method for solving
  high dimensional elliptic partial differential equations.
\newblock volume 134 of {\em Proceedings of Machine Learning Research}, pages
  3196--3241. PMLR, 2021.
\newblock URL: \url{https://proceedings.mlr.press/v134/lu21a.html}.

\bibitem{Luo_2020}
Tao Luo and Haizhao Yang.
\newblock Two-layer neural networks for partial differential equations:
  Optimization and generalization theory, 2020.
\newblock \href {http://arxiv.org/abs/2006.15733} {\path{arXiv:2006.15733}}.

\bibitem{Marquardt1963}
Donald~W. Marquardt.
\newblock An algorithm for least-squares estimation of nonlinear parameters.
\newblock {\em Journal of the Society for Industrial and Applied Mathematics},
  11:431--441, 1963.

\bibitem{Marttelli}
Fabrizio Martelli, Tiziano Binzoni, Samuele Del~Bianco, Andr{\'e} Liemert, and
  Alwin Kienle.
\newblock {\em Light propagation through biological tissue and other diffusive
  media: theory, solutions, software}.
\newblock SPIE Press, Bellingham, Washington, 2010.

\bibitem{Mishra2}
Siddhartha Mishra and Roberto Molinaro.
\newblock Estimates on the generalization error of physics-informed neural
  networks for approximating a class of inverse problems for {PDE}s.
\newblock {\em IMA Journal of Numerical Analysis}, 42(2):981--1022, 2022.
\newblock URL: \url{https://doi.org/10.1093/imanum/drab032}, \href
  {http://dx.doi.org/10.1093/imanum/drab032}
  {\path{doi:10.1093/imanum/drab032}}.

\bibitem{Raissi}
M.~Raissi, P.~Perdikaris, and G.~E. Karniadakis.
\newblock Physics-informed neural networks: a deep learning framework for
  solving forward and inverse problems involving nonlinear partial differential
  equations.
\newblock {\em Journal of Computational Physics}, 378:686--707, 2019.
\newblock URL: \url{https://doi.org/10.1016/j.jcp.2018.10.045}, \href
  {http://dx.doi.org/10.1016/j.jcp.2018.10.045}
  {\path{doi:10.1016/j.jcp.2018.10.045}}.

\bibitem{Raissi-Yazdani}
Maziar Raissi, Alireza Yazdani, and George~Em Karniadakis.
\newblock Hidden fluid mechanics: A navier-stokes informed deep learning
  framework for assimilating flow visualization data, 2018.
\newblock \href {http://arxiv.org/abs/1808.04327} {\path{arXiv:1808.04327}}.

\bibitem{RundellZhang:2020}
William Rundell and Zhidong Zhang.
\newblock On the identification of source term in the heat equation from sparse
  data.
\newblock {\em SIAM Journal on Mathematical Analysis}, 52(2):1526--1548, 2020.
\newblock URL: \url{https://doi.org/10.1137/19M1279915}, \href
  {http://dx.doi.org/10.1137/19M1279915} {\path{doi:10.1137/19M1279915}}.

\bibitem{Shin1}
Yeonjong Shin, J\'{e}r\^{o}me Darbon, and George~Em Karniadakis.
\newblock On the convergence of physics informed neural networks for linear
  second-order elliptic and parabolic type {PDE}s.
\newblock {\em Communications in Computational Physics}, 28(5):2042--2074,
  2020.
\newblock URL: \url{https://doi.org/10.4208/cicp.oa-2020-0193}, \href
  {http://dx.doi.org/10.4208/cicp.oa-2020-0193}
  {\path{doi:10.4208/cicp.oa-2020-0193}}.

\bibitem{Shin2}
Yeonjong Shin, Zhongqiang Zhang, and George~Em Karniadakis.
\newblock Error estimates of residual minimization using neural networks for
  linear pdes.
\newblock {\em Journal of Machine Learning for Modeling and Computing},
  4(4):73--101, 2023.
\newblock \href {http://dx.doi.org/10.1615/JMachLearnModelComput.2023050411}
  {\path{doi:10.1615/JMachLearnModelComput.2023050411}}.

\bibitem{Shukla}
Khemraj Shukla, Patricio~Clark Di~Leoni, James Blackshire, Daniel Sparkman, and
  George~Em Karniadakis.
\newblock Physics-informed neural network for ultrasound nondestructive
  quantification of surface breaking cracks.
\newblock {\em Journal of Nondestructive Evaluation}, 39:1--20, 2020.

\bibitem{Sirignano-DGM}
Justin Sirignano and Konstantinos Spiliopoulos.
\newblock D{GM}: a deep learning algorithm for solving partial differential
  equations.
\newblock {\em Journal of Computational Physics}, 375:1339--1364, 2018.
\newblock URL: \url{https://doi.org/10.1016/j.jcp.2018.08.029}, \href
  {http://dx.doi.org/10.1016/j.jcp.2018.08.029}
  {\path{doi:10.1016/j.jcp.2018.08.029}}.

\bibitem{Sun2020IP}
Chunlong Sun and Jijun Liu.
\newblock An inverse source problem for distributed order time-fractional
  diffusion equation.
\newblock {\em Inverse Problems}, 36(5):055008, apr 2020.
\newblock URL: \url{https://dx.doi.org/10.1088/1361-6420/ab762c}, \href
  {http://dx.doi.org/10.1088/1361-6420/ab762c}
  {\path{doi:10.1088/1361-6420/ab762c}}.

\bibitem{Sun2020AMAS}
Chunlong Sun and Jijun Liu.
\newblock Reconstruction of the space-dependent source from partial neumann
  data for slow diffusion system.
\newblock {\em Acta Mathematicae Applicatae Sinica, English Series},
  36(1):166--182, 2020.
\newblock URL: \url{https://doi.org/10.1007/s10255-020-0919-2}, \href
  {http://dx.doi.org/10.1007/s10255-020-0919-2}
  {\path{doi:10.1007/s10255-020-0919-2}}.

\bibitem{Sun2022}
Chunlong Sun and Zhidong Zhang.
\newblock Uniqueness and numerical inversion in the time-domain fluorescence
  diffuse optical tomography.
\newblock {\em Inverse Problems}, 38(10):104001, sep 2022.
\newblock URL: \url{https://dx.doi.org/10.1088/1361-6420/ac88f3}, \href
  {http://dx.doi.org/10.1088/1361-6420/ac88f3}
  {\path{doi:10.1088/1361-6420/ac88f3}}.

\bibitem{Bao1}
Yaohua Zang, Gang Bao, Xiaojing Ye, and Haomin Zhou.
\newblock Weak adversarial networks for high-dimensional partial differential
  equations.
\newblock {\em Journal of Computational Physics}, 411:109409, 14, 2020.
\newblock URL: \url{https://doi.org/10.1016/j.jcp.2020.109409}, \href
  {http://dx.doi.org/10.1016/j.jcp.2020.109409}
  {\path{doi:10.1016/j.jcp.2020.109409}}.

\bibitem{Zhang-Li-Liu}
Mengmeng Zhang, Qianxiao Li, and Jijun Liu.
\newblock On stability and regularization for data-driven solution of parabolic
  inverse source problems.
\newblock {\em Journal of Computational Physics}, 474:111769, 2023.
\newblock URL: \url{https://doi.org/10.1016/j.jcp.2022.111769}, \href
  {http://dx.doi.org/10.1016/j.jcp.2022.111769}
  {\path{doi:10.1016/j.jcp.2022.111769}}.

\end{thebibliography}

\end{document}